\definecolor{alert}{rgb}{0.8,0,0}
\newcommand{\N}{\mathbb{N}}
\newcommand{\R}{\mathbb{R}}
\newcommand{\h}{\mathbb{H}}
\newcommand{\B}{\mathbb{B}}
\newcommand{\M}{\mathbb{M}}
\newcommand{\Nil}{\mathrm{Nil}_3(\tau)}
\renewcommand{\div}{\mathrm{div}}
\newtheorem{theorem}{Theorem}[section]
\newtheorem{proposition}[theorem]{Proposition}
\newtheorem{lemma}[theorem]{Lemma}
\newtheorem{claim}[theorem]{Claim}
\theoremstyle{definition}
  \newtheorem{definition}[theorem]{Definition}
\theoremstyle{remark}
\newtheorem{remark}[theorem]{Remark}
\newtheorem{example}[theorem]{Example}
\numberwithin{equation}{section}
\title[Jenkins-Serrin Problem]{The Jenkins-Serrin problem for constant mean curvature graphs in the Heisenberg space $\mathbf{ Nil_3}(\tau)$}
\author{Carlos Pe\~{n}afiel}
\address{}
\email{}
\thanks{}
\subjclass[2000]{Primary 53C42; Secondary 53C30}
\keywords{}
\begin{document}
\maketitle

\begin{abstract}
 In this paper we find functions over bounded domains in the 2-dimensional Euclidean space, whose graphs (in the Heisenberg space) has constant mean curvature different from zero and taking on (possibly) infinite boundary values over the boundary of the domain.   \end{abstract}

\section{Introduction}\label{sec:introduction}

The classical Dirichlet problem for the constant mean curvature and minimal surfaces (H-surfaces and minimal surfaces from now on) in the 3-dimensional Euclidean space $\R^3$, consists in the determination of a function $u=u(x,y)$ satisfying the partial differential equation
\begin{equation}\label{i1}
(1+q^2)r-2pqs+(1+p^2)t=2H(1+p^2+q^2)^{3/2}
\end{equation}
in a fixed domain $\Omega$ in the $(x,y)-$plane, and taking on assigned continuous value on the boundary of $\Omega$.

In order to introduce the Jenkins-Sering theory, we divide the equation (\ref{i1}) in two cases.
\begin{itemize}
\item For the case of minimal surfaces in $\R^3$, the Dirichlet problem was solved by Radô in 1930, for convex domains, see \cite{Rado}. Radô based his proof on the existence theorem for the parametric problem of least area. In \cite{JS} Jenkins and Serrin gave an alternate proof entirely avoiding reference to the parametric problem.

On the other hand, it was well known that in 1934 H. F. Scherk discovered his famous minimal surface which is a graph of a function defined over a square and taking on infinite boundary data. More precisely, Scherk find out the surface given by
$$z=\log\cos x-\log\cos y, \hspace{.2cm}\vert x\vert < \pi /2, \hspace{.2cm} \vert y\vert<\pi/2.$$
The graph of this function is a minimal surface in $\R^3$, the function takes on plus infinite and minus infinite boundary data on alternates sides of the boundary of the square. We call this surface, the Scherk example. This Scherk example can be seem as a solution of the Dirichlet problem for the minimal curvature equation, taking on infinite boundary values over the boundary of the square.   

In \cite{JS} Jenkins and Serrin developed an existence and uniqueness theory applicable to situation in which continuity of the data is set aside and which infinite boundary values are allowed on entire arcs of the boundary. Fundamental for they work was the existence of a solution of the Dirichlet problem  in a convex domain with assigned boundary data, notice that this work generalize the Scherk example. From now on, we call this kind of surfaces, Jenkins-Serrin surfaces and the theory development, the Jenkins-Serrin theory.  

The Jenkins-Serrin theory was extend to $\h\times\R$ (here $\h$ label the 2-dimensional hyperbolic space) by Nelli and Rosenberg in \cite{NelliR}, and to $M^2\times\R$, where $M^2$ is an arbitrary Riemannian surface, by Pinheiro in \cite{Pinheiro1}. And for non-compact domains, the Jenkins-Serrin theory was treated by Mazet L., Rodríguez, M. and Rosenberg, H. in \cite{Mazet}.

Different of the 3-dimensional product spaces $\h\times\R$ and $\mathbb{S}^2\times\R$ (here $\mathbb{S}^2$ label the 2-dimensional Euclidean sphere), which are homogeneous and simply connected, having 4-dimensional isometry groups, we have the Heisenberg space $\Nil$ and the universal cover of the space $PSL_2(\R)$ (here $PSl_2(\R)$ denote the preserving-orientation isometries of the hyperbolic space $\h$) which we denote by $\widetilde{PSL}_2(\R,\tau)$. For more details, see  for instance \cite{T}, \cite{D}.  

The Jenkins-Serrin theory for compact domains in $\Nil$ was treated by Ana Lucia Pinheiro in \cite{Pinheiro2} and for the $\widetilde{PSL}_2(\R,\tau)$ (in the $\tau=-1/2$ case) space by Rami-Younes in \cite{Younes}.

\item On the other hand, for the case $H\neq0$, the Jenkins-Serrin theory for bounded domains was extended to $\R^3$ by Spruck in \cite{Spruck}, for $\h\times\R$ and $\mathbb{S}^2\times\R$ by Hauswirth, Rosenberg and Spruck in \cite{Hauswirth}.

For unbounded domains, the Jenkins-Serrin theory was extended to  $\h\times\R$ by Folha and Mello in \cite{Folha2}, and in the case $M^2\times\R$ (here $M^2$ label a Hadamard surface) by Folha and Rosenberg in \cite{Folha1}.
\end{itemize} 
If we consider complete, simply connected, 3-dimensional homogeneous manifolds having 4-dimensional isometry groups, it result open the case $H\neq0$ for bounded domains in the Heisenberg space $\Nil$ and the $\widetilde{PSl}_2(\R,\tau)$ space. As we have remarked, the fundamental fact in the Jenkins-Serrin theory, was the existence of a solution of the Dirichlet problem with prescribed continuous boundary data, on a convex domain. Since that in \cite{Dajczer} Dajczer and Lira solve this Dirichlet problem for general 3-manifolds carrying on a non-singular Killing field, we use the existence of such solution to extend the Jenkins-Serrin Theory to $\Nil$ in the case $H\neq0$ for bounded domains. The key idea is the study the flux of monotone increasing and decreasing sequences of solutions of the Dirichlet problem  along arcs where they diverge, we will use the Killing submersion in order to study the geometric behaviour of such sequences of graphs. The Jenkins-Serrin problem for unbounded domain in the $\widetilde{PSL}_2(\R,\tau)$ space is treated in \cite{Penafiel-Folha} and for a Killing Submersion, see \cite{Penafiel-Cui}.

The paper is organized as follow, in Section 2 we give the details of the Heisenberg space $\Nil$ seem as a Riamannian submersion over the 2-dimensional Euclidean space $\R^2$. We prove a Maximum principle and we cite the existence theorem of the Dirichlet problem. In Section 3, we deal with $H$-sections, and establishes the Jenkins-Serrin problem as well as we study the properties of the flux of the sequences of solutions. Finally in Section 4, we prove the main theorems. 

\section{Preliminares}
We denote by $\Nil$ the 3-dimensional Lie group endowed with a left invariant metric $g$. For each $\tau\neq0$, $\Nil$ is a homogeneous simply connected Riemannian manifold.

In Euclidean coordinates $\Nil=(\R^3,g)$, where $\R^3$ label the 3-dimensional Euclidean space and $\Nil$ is endowed with the metric 
\begin{equation}\label{e1}
g=dx^2+dy^2+(\tau(ydx-xdy)+dz)^2
\end{equation}
The Lie group $\Nil$ is one of the eight Thurston's geometries (see \cite{T}), and it is well known that there exists a Killing submersion (see \cite{Rosenberg1})
\begin{equation*}
\pi:\Nil\longrightarrow\R^2
\end{equation*}
\begin{equation}\label{e2}
(x,y,z)\longmapsto(x,y)
\end{equation}
from $\Nil$ into the Eucliedan 2-dimensional space $\R^2$. That is $\pi$ is a Riemannian submersion, the bundle curvature is $\tau$ and the unit vector field along the fibers is a Killing vector field. Therefore translations along the fibers are isometries. We denote this Killing vector field by $\xi$.

We call a vector field $Z\in\chi(\Nil)$ vertical if it is a non-zero multiple of $\xi$ and horizontal if $g(Z,\xi)=0.$

In order to obtain an orthonormal frame on $\Nil$, we consider the canonical frame $\{e_1=\partial_x,e_2=\partial_y\}$ of $\R^2$ and consider the horizontal lift $E_1$ and $E_2$ of $e_1$ and $e_2$ respectively. Then the canonical orthonormal frame $\{E_1,E_2,\xi\}$ of $\Nil$ is given by
$$E_1=\partial_x-\tau y\partial_z, \hspace{.5cm} E_2=\partial_y+\tau x\partial_z, \hspace{.5cm} \xi=\partial_z.$$
Denote by $\overline{\nabla}$ the Riemannian connection of $\Nil$, so
\begin{center}
  $\begin{array}{ccc}
\overline{\nabla}_{E_{1}}E_{1}=0 & \overline{\nabla}_{E_{1}}E_{2}=\tau E_{3} & \overline{\nabla}_{E_{1}}E_{3}=-\tau E_{2} \\[10pt]
    \overline{\nabla}_{E_{2}}E_{1}=-\tau E_{3} & \overline{\nabla}_{E_{2}}E_{2}=0 & \overline{\nabla}_{E_{2}}E_{3}=\tau E_{1} \\[10pt]
    \overline{\nabla}_{E_{3}}E_{1}=-\tau E_{2}  & \overline{\nabla}_{E_{3}}E_{2}=\tau E_{1} & \overline{\nabla}_{E_{3}}E_{3}=0
  \end{array}$
\end{center}

$$[E_1,E_2]=2\tau E_3, \hspace{.5cm} [E_2,E_3]=0=[E_1,E_3]$$
for more details see \cite{Dajczer1}, \cite{F}, \cite{D}.

The isometry group of $\Nil$ has dimension 4, the isometries of $\Nil$ are the translations generated by the Killing vector fields
$$F_1=\partial_x+\tau y\partial_z, \hspace{.5cm} F_2=\partial_y-\tau x\partial_z, \hspace{.5cm} F_3=\partial_z$$
and the rotations about the $z$-axis corresponding to 
$$F_4=-y\partial_x+x\partial_y$$
The translations corresponding to $F_1$ and $F_2$ are respectively
$$(x,y,z)\longmapsto(x+t,y,z+\tau ty)$$
and
$$(x,y,x)\longmapsto(x,y+t,z-\tau tx)$$
where $t\in\R$.

\subsection{The mean curvature equation and the maximum principle}
We identify the space $\R^2$ with its lift $\R^2\times\{0\}\subset\Nil$ and for a given domain $\Omega\subset\R^2$, we also denote by $\Omega$ its lift to $\R^2\times\{0\}$.

To a function $u\in C^0(\overline{\Omega})$ on $\Omega$, we define the graph of $u$, denoted by $\Sigma(u)$, as being the set
\begin{equation}\label{e3}
\Sigma(u)=\{(x,y,u(x,y))\in\Nil;(x,y)\in\Omega\}
\end{equation}
Throughout this paper the surface $\Sigma(u)$ will have mean curvature $H>0$ with respect to the upward pointing normal vector of $\Sigma(u)$.

In order to obtain the normal vector $N$ of $\Sigma(u)$, consider the function
$$u^*:\Nil\longrightarrow\R$$
$$(x,y,z)\longmapsto u^*(x,y,z)=u(x,y)$$
and set $F(x,y,z)=z-u^*(x,y,z)$. Therefore, $\Sigma(u)=F^{-1}(0)$, that is $\Sigma(u)$ is the level surface of $F$. It is well known that the function $H$ satisfies
\begin{equation}\label{ee1}
\div_{\Nil}\left(\dfrac{\overline{\nabla}_{g}F}{\vert \overline{\nabla}_gF\vert}\right)=-2H
\end{equation}
where $\div_{\Nil}$ and $\overline{\nabla}_g$ denote the divergence and gradient in $\Nil$. Thus, the upward pointing normal $N$ is given by
$$N=\dfrac{\overline{\nabla}_gF}{\vert\overline{\nabla}_gF\vert}.$$
A straightforward computation shows
$$\overline{\nabla}_gF=-(u_x+\tau y)E_1-(u_y-\tau x)E_2+E_3$$
hence
$$W^2:=\vert\overline{\nabla}_gF\vert=1+(u_x+\tau y)^2+(u_y-\tau x)^2\vert.$$
Using the Riemannian submersion, the function $u$ satisfies the equation ($the$ $mean$ $curvature$ $equation$)
\begin{equation}\label{e4}
\div_{\R^2}\left(\dfrac{\alpha}{W}\partial_x+\dfrac{\beta}{W}\partial_y\right)=2H
\end{equation}
where
\begin{equation}\label{es5}
\alpha=\tau y+u_x, \hspace{.5cm} \beta=-\tau x+u_y, \hspace{.5cm} \textnormal{and} \hspace{.5cm} W^2=1+\alpha^2+\beta^2.
\end{equation}
Thus, the surface $\Sigma(u)$ has mean curvature function $H$ if and only if $u$ is a solution of the following PDE
\begin{equation}\label{e6}
L_H(u):=\dfrac{1}{W}[(1+\beta^2)u_{xx}+(1+\alpha^2)u_{yy}- 2\alpha\beta u_{xy}]-2H=0
\end{equation}
for $\alpha$, $\beta$ and $W$ from (\ref{es5}).

Notice that 
$$N=\dfrac{1}{W}(-\alpha E_1-\beta E_2+E_3)=N^h+N^v$$
where
$$N^h=\textnormal{horizontal part of $N$}=\dfrac{-G(u)}{W}=\dfrac{\alpha E_1+\beta E_2}{W}$$
and
$$N^v=\textnormal{vertical part of $N$}=\dfrac{E_3}{W}.$$
Consider two functions 
$$u,v:\Omega\longrightarrow\R,$$
the upwards pointing normal $N_1$ and $N_2$ of $\Sigma(u_1)$ and $\Sigma(u_2)$ are respectively
\begin{equation}\label{ee2}
\left\{
  \begin{array}{ll}
    N_1=-\dfrac{G(u_1)}{W_1}+\dfrac{1}{W_1}E_3 & \hbox{} \\
    N_2=-\dfrac{G(u_2)}{W_2}+\dfrac{1}{W_2}E_3 & \hbox{} 
  \end{array}
\right.
\end{equation}
where $W_i=W(u_i)$, $i=1,2$.\\
Notice that $\langle W_iN_i, E_3\rangle=1$ and
\begin{eqnarray}\label{ee3}
\nonumber\langle G(u_1)-G(u_2),\dfrac{G(u_1)}{W_1}-\dfrac{G(u_2)}{W_2} \rangle & = & (W_1+W_2)(1-\langle N_1,N_2\rangle) \\
 & = & \dfrac{W_1+W_2}{2}\vert N_1-N_2\vert^2\geq0
\end{eqnarray}
On the other hand
$$G(u_i)=-(u_{ix}+\tau y)E_1-(u_{iy}-\tau x)E_2$$
Setting 
$$\widetilde{X}_i=-\pi_*(G(u_i))=\alpha\partial_x+\beta\partial_y= (u_{ix}+\tau y)\partial_x+(u_{iy}-\tau x)\partial_y$$
and
$$X_{u_i}=\dfrac{\widetilde{X}_i}{W_i}$$
we conclude that
$$X_{u_1}-X_{u_2}=\nabla_0u_1-\nabla_0u_2,$$
where $\nabla_0$ denotes the gradient in the Euclidean space $\R^2$.\\
Using the Riemannian submersion (\ref{ee3}) becomes
\begin{equation}\label{ee4}
\langle\nabla_0u_1-\nabla_0u_2,X_{u_1}-X_{u_2}\rangle_{\R^2}\geq0
\end{equation}
Thus, we have proved the next lemma.
\begin{lemma}\label{l1}
Let $u_1$ and $u_2$ be functions in $C^2(\Omega)$, $\Omega\subset\R^2$ and set $W_i=W(u_i)$, $i=1,2$. Then
\begin{equation}\label{ee5}
\langle\nabla_0u_1-\nabla_0u_2,X_{u_1}-X_{u_2}\rangle\geq0
\end{equation}
with equality at a point if and only if $\nabla_0u_1=\nabla_0u_2$.
\end{lemma}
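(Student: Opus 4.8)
The plan is to read the inequality as a statement about the two upward unit normals $N_1$ and $N_2$ in $\Nil$ and then transport it down to $\R^2$ via the Riemannian submersion $\pi$. The decomposition $N_i=-G(u_i)/W_i+E_3/W_i$ exhibits $N_i$ as a unit vector with horizontal part $-G(u_i)/W_i$ and vertical part $E_3/W_i$, and the observation driving the whole argument is that the Euclidean quantity on the left of (\ref{ee5}) is, up to the submersion, exactly the $\Nil$-inner product $\langle G(u_1)-G(u_2),\,G(u_1)/W_1-G(u_2)/W_2\rangle$ appearing in (\ref{ee3}). So I would first prove the inequality upstairs in $\Nil$ and only then project.

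First I would establish the key identity (\ref{ee3}). Expanding the left-hand inner product bilinearly and using $|G(u_i)|^2=W_i^2-1$ (immediate from $W_i^2=1+\alpha_i^2+\beta_i^2$, since $G(u_i)=-\alpha_i E_1-\beta_i E_2$), the four terms collapse to $W_1+W_2-1/W_1-1/W_2$ together with a cross term in $\langle G(u_1),G(u_2)\rangle$. On the other hand, the orthogonal splitting of $N_i$ into horizontal and vertical parts gives $\langle N_1,N_2\rangle=(\langle G(u_1),G(u_2)\rangle+1)/(W_1W_2)$, so that $(W_1+W_2)(1-\langle N_1,N_2\rangle)$ reproduces precisely the same expression; this is where $\langle W_iN_i,E_3\rangle=1$ is used. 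Since $N_1$ and $N_2$ are unit vectors by construction, $|N_1-N_2|^2=2(1-\langle N_1,N_2\rangle)$, which turns the quantity into $\tfrac{W_1+W_2}{2}|N_1-N_2|^2$; as $W_1+W_2>0$, this is manifestly nonnegative. This is the geometric heart of the proof.

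Next I would push the inequality down to $\R^2$. Because $\pi$ is a Riemannian submersion and each $G(u_i)$ is horizontal, $\pi_*$ preserves their inner products, and with $\pi_*(G(u_i))=-\widetilde{X}_i$ the left side of (\ref{ee3}) becomes $\langle\widetilde{X}_1-\widetilde{X}_2,\,X_{u_1}-X_{u_2}\rangle_{\R^2}$. The crucial cancellation is that $\widetilde{X}_1-\widetilde{X}_2=\nabla_0u_1-\nabla_0u_2$: the common terms $\tau y\,\partial_x$ and $-\tau x\,\partial_y$ drop out, leaving exactly the Euclidean gradient difference. Combined with the previous paragraph, this yields (\ref{ee5}).

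For the equality clause I would analyze when the nonnegative quantity vanishes. Equality at a point means $|N_1-N_2|^2=0$, i.e. $N_1=N_2$ there; matching vertical parts forces $W_1=W_2$, and then matching horizontal parts forces $G(u_1)=G(u_2)$, so projecting gives $\widetilde{X}_1=\widetilde{X}_2$, that is $\nabla_0u_1=\nabla_0u_2$. The converse is easy: if $\nabla_0u_1=\nabla_0u_2$ then $\alpha_1=\alpha_2$, $\beta_1=\beta_2$, hence $W_1=W_2$ and $N_1=N_2$, so the inner product vanishes. The only real obstacle is bookkeeping—keeping track of which inner products live in $\Nil$ and which in $\R^2$, and verifying that horizontality lets $\pi_*$ pass freely between them—while the algebra itself is routine.
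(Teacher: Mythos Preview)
Your proposal is correct and follows essentially the same route as the paper: establish the identity (\ref{ee3}) in $\Nil$ using the decomposition $N_i=-G(u_i)/W_i+E_3/W_i$ and the fact that $|N_i|=1$, then push it down to $\R^2$ via the Riemannian submersion, exploiting the cancellation $\widetilde{X}_1-\widetilde{X}_2=\nabla_0u_1-\nabla_0u_2$. Your treatment of the equality case (matching vertical and horizontal parts of $N_1=N_2$) is also the intended one, and you have in fact been a bit more explicit than the paper about the algebra behind (\ref{ee3}).
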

Following the above notation, the mean curvature function $H$ of the surface $\Sigma(u)$ satisfy the equation
\begin{equation}\label{ee6}
\div_{\R^2}(X_u)=2H
\end{equation}
where $X_u=-\pi_*\left(\dfrac{G(u)}{W}\right).$

For this situation, we prove the following maximum principle.
\begin{theorem}\label{MP}(Maximum principle)
Consider $\Omega\subset\R^2$ a bounded domain. Let $u,v\in C^2(\Omega)$ be two functions whose graphs $\Sigma(u)$ and $\Sigma(v)$ have the same prescribed mean curvature $H$. Let $E\subset\partial\Omega$ be a finite set of points such that $\partial\Omega-E$ consists of smooth arcs, and suppose that $u$ and $v$ extend continuously to each smooth arc of $\partial\Omega-E$. If $u\geq v$ on $\partial\Omega$, then $u\geq v$ on $\Omega$.
\end{theorem}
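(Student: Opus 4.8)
The plan is to argue by contradiction, exploiting that both $u$ and $v$ produce the same divergence. Since $\Sigma(u)$ and $\Sigma(v)$ have the same mean curvature $H$, equation (\ref{ee6}) gives $\div_{\R^2}X_u = 2H = \div_{\R^2}X_v$, so the vector field $Y := X_u - X_v$ is divergence-free on $\Omega$. Moreover, from (\ref{es5}) one has $\abs{X_u}^2 = (\alpha^2+\beta^2)/W^2 = 1 - W^{-2} < 1$, and likewise $\abs{X_v}<1$, so $Y$ is a bounded field with $\abs{Y}<2$ everywhere on $\Omega$; this uniform bound is what will let me discard the contribution of the exceptional points $E$. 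Set $w := v - u$ and suppose, for contradiction, that the conclusion fails, i.e. $M := \sup_\Omega w > 0$.

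Since $w\in C^2(\Omega)$, Sard's theorem lets me choose a regular value $\epsilon\in(0,M)$ of $w$, so that the superlevel set $A_\epsilon := \{w>\epsilon\}$ is a nonempty open subset of $\Omega$ whose boundary inside $\Omega$ is a smooth curve $\Gamma := \{w=\epsilon\}\cap\Omega$, along which $\nabla_0 w\neq 0$. Because $u\ge v$ on each smooth arc of $\partial\Omega - E$ and these extensions are continuous, $w\le 0<\epsilon$ there, so $\overline{A_\epsilon}$ meets $\partial\Omega$ at most at the finite set $E$; in particular $A_\epsilon$ is a proper subset of the connected set $\Omega$ and $\Gamma\neq\vazio$.

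Next I would remove small disks around $E$. Writing $E=\{p_1,\dots,p_k\}$ and $B_\delta=\bigcup_j B(p_j,\delta)$, apply the divergence theorem to $Y$ on $A_\epsilon\smallsetminus\overline{B_\delta}$. Since $\div Y=0$, the volume integral vanishes and the boundary flux splits into a piece over $\Gamma\smallsetminus B_\delta$ and a piece over the circular arcs in $\partial B_\delta\cap A_\epsilon$, whose total length is $O(\delta)$; by $\abs{Y}<2$ the latter is $O(\delta)$. On $\Gamma$ the outward normal to $A_\epsilon$ is $\nu=-\nabla_0 w/\abs{\nabla_0 w}$, and using $\nabla_0 w=\nabla_0 v-\nabla_0 u$ together with Lemma \ref{l1} gives $\langle Y,\nu\rangle = \langle X_u-X_v,\nabla_0 u-\nabla_0 v\rangle/\abs{\nabla_0 w}\ge 0$. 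Thus $\int_{\Gamma\smallsetminus B_\delta}\langle Y,\nu\rangle\, ds = O(\delta)$ with nonnegative integrand; letting $\delta\to 0$ and using monotone convergence yields $\int_\Gamma\langle Y,\nu\rangle\, ds=0$, hence $\langle X_u-X_v,\nabla_0 u-\nabla_0 v\rangle\equiv 0$ on $\Gamma$. By the equality case of Lemma \ref{l1} this forces $\nabla_0 u=\nabla_0 v$, i.e. $\nabla_0 w=0$ along $\Gamma$, contradicting that $\epsilon$ is a regular value and $\Gamma\neq\vazio$. Therefore $M\le 0$, i.e. $u\ge v$ on $\Omega$.

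The step I expect to be the main obstacle is the treatment of the exceptional set $E$: no boundary control is assumed there, and $w=v-u$ may well be unbounded near these points (as in Scherk-type configurations), so the level curve $\Gamma$ can accumulate at $E$ with a priori uncontrolled length. The device that resolves this is the combination of two facts: the flux of the bounded field $Y$ through the excised circles is $O(\delta)$, and the boundary integrand on $\Gamma$ is nonnegative, so the limit $\delta\to 0$ can be taken by monotone convergence without ever needing finiteness of the length of $\Gamma$ or boundedness of $w$ near $E$.
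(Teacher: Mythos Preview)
Your proof is correct and follows essentially the same route as the paper's: both argue by contradiction, apply the divergence theorem to the divergence-free field $X_u-X_v$ on the set where the desired inequality fails, excise small disks around $E$ using the uniform bound $\lvert X_u-X_v\rvert<2$, and invoke Lemma~\ref{l1} on the level-curve portion of the boundary to derive the contradiction. Your use of Sard's theorem to select a regular level $\epsilon\in(0,M)$ is a slightly cleaner substitute for the paper's translation step (which then has to assume regularity of the zero level set), but the two arguments are otherwise interchangeable.
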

\begin{proof}
Consider the set $D=\{x\in\Omega,u(x)-v(x)<0\}$. We can translate the surfaces $\Sigma(u)$ and $\Sigma(v)$ in the $\partial_z$ direction to assume that $u<v$ on $\Omega-E$, and by contradiction, we are supposing that $D$ is not empty.

The boundary of $D$ consists of proper curves in $\Omega$ which goes to points of $E$. We can also assume that those curves are regular, ie. $\nabla_0(u-v)$ is nonzero on $\partial D$. Denote by $\widehat{D}$ a connected component of $D$ and let $\widehat{D}_\epsilon\subset\widehat{D}$ be the domain such that $\partial\widehat{D}_\epsilon$ is the set of points in $\partial\widehat{D}$ whose distance from, $E$ is greater from $\epsilon>0$, for $\epsilon$ small enough, together with the union $\cup C_\epsilon$ of circular arcs contained in $\widehat{D}$ which are part of circles centered at points of $\partial\widehat{D}\cap E$, having radius $\epsilon$.  

From the mean curvature equation (\ref{ee6}), we hace
$$\int_{D_\epsilon}\div_{\R^2}(X_u-X_v)=0$$
and by Stokes' Theorem
\begin{equation}\label{ee7}
0=\int_{\widehat{D}_\epsilon}\div_{\R^2}(X_u-X_v)=\int_{\partial\widehat{D}_\epsilon}\langle X_u-X_v,\eta\rangle
\end{equation}
where $\eta$ is the outward unit conormal to $\partial\widehat{D}_\epsilon$.

From Lemma \ref{l1}
\begin{equation}\label{ee8}
\langle X_u-X_v,\nabla_0u-\nabla_0v\rangle\geq0
\end{equation}
As $\nabla_0(u-v)\neq0$ and $u-v\equiv0$ on $\partial\widehat{D}_\epsilon-\cup C_\epsilon$, and $u-v<0$ on $\widehat{D}_\epsilon$, the vector $\nabla_0(u-v)$ is a positive multiple of $\eta$ on $\partial\widehat{D}_\epsilon-\cup C_\epsilon$. Therefore from (\ref{ee8})
$$\int_{\partial\widehat{D}_\epsilon-\cup C_\epsilon} \langle X_u-X_v,\eta\rangle_{\R^2}\geq\delta>0.$$
On the other hand, on $\cup C_\epsilon$, we have
$$\int_{\cup C_{\epsilon}}\langle X_u-X_v,\eta\rangle\leq\int_{\cup C_\epsilon} \vert\langle X_u-X_v,\eta\rangle\vert\leq2\times\textnormal{length}(\cup C_\epsilon)$$
thence, making $\epsilon$tend to zero, we conclude that
$$\int_\epsilon\langle X_u-X_v,\eta\rangle>0,$$
which contradicts (\ref{ee7}). Thus $D$ is empty and $u\leq v$ on $\Omega$.

\end{proof}

\subsection{The Dirichlet problem}
In \cite{Dajczer} Dajczer and Lira have proved the existence and uniqueness of Killing graphs with prescribed mean curvature in Killing submersions. In our context, the surfaces $\Sigma(u)\subset\Nil$ which are the graph of the function
$$u:\Omega\subset\R^2\longrightarrow\R$$
are Killing graphs, since they are transverse to the vertical Killing field $\xi$. From now on, the surfaces $\Sigma(u)$ will be called Killing graphs.
 
For a domain $\Omega\subset\R^2$ with boundary $\Gamma=\partial\Omega$, we denote the correspondent Killing cylinders by $M_0=\pi^{-1}(\overline{\Omega})$ and $K=\pi^{-1}(\Gamma)$. We denote by $H_{cyl}$ the inward mean curvature of $K$ and by $Ric_{\Nil}$ the Ricci tensor of $\Nil$. Then it has showed in \cite[Theorem 1]{Dajczer} the following theorem.
\begin{theorem}\label{DP}(The Dirichelt problem)
Let $\Omega\subset\R^2$ be a domain with compact closure and $C^{2,\alpha}$ boundary. Suppose $H_{cyl}>0$ and 
$$\inf_{\Nil} Ric_{\Nil}\geq-2\inf_\Gamma H^2_{cyl}.$$ 
Let $H\in C^\alpha(\overline{\Omega})$ and $\phi\in C^{2,\alpha}(\Gamma)$ be given functions. If
$$\sup_\Omega\vert H\vert\leq\inf_\Gamma H_{cyl}.$$
Then, there exists a unique function $u\in C^{2,\alpha}(\overline{\Omega})$ satisfying $u\vert_\Gamma=\phi$ whose Killing graph $\Sigma(u)$ has mean curvature $H$. 
\end{theorem}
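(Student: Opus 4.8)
The plan is to deduce uniqueness from the maximum principle already established and to obtain existence by the method of continuity, with all the real work concentrated in a priori estimates tailored to the geometry of the Killing cylinder $K=\pi^{-1}(\Gamma)$. Uniqueness is immediate: if $u_1,u_2\in C^{2,\alpha}(\overline{\Omega})$ both solve $L_H(\cdot)=0$ and agree with $\phi$ on $\Gamma$, then Theorem \ref{MP} (applied with the exceptional set $E=\varnothing$, first to the pair $(u_1,u_2)$ and then to $(u_2,u_1)$) yields $u_1\geq u_2$ and $u_2\geq u_1$ on $\Omega$, hence $u_1=u_2$.

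For existence I would embed the problem in the family $L_{tH}(u)=0$ in $\Omega$, $u|_\Gamma=t\phi$, for $t\in[0,1]$, and set $S=\{t\in[0,1]:\text{this problem has a solution }u_t\in C^{2,\alpha}(\overline{\Omega})\}$. Since every second derivative of $u\equiv 0$ vanishes, $L_0(0)=0$, so $0\in S$ and $S\neq\varnothing$. To show $S$ is open I would linearize: writing the equation in the divergence form $\div(X_u)=2tH$ of (\ref{ee6}), its linearization at a solution $u_t$ is a linear second-order uniformly elliptic operator in divergence form with no zeroth-order term, and the maximum principle forces it to be invertible from $C^{2,\alpha}_0(\overline{\Omega})$ onto $C^\alpha(\overline{\Omega})$; the implicit function theorem in these Banach spaces then produces solutions for all $t'$ near $t$.

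The substantial step is closedness, which requires uniform $C^{2,\alpha}$ bounds along the family. I would proceed through the classical chain for quasilinear elliptic equations of mean curvature type. First a $C^0$ (height) bound, obtained by comparison with vertical translates of the cylinder barrier, where the hypothesis $\sup_\Omega|H|\leq\inf_\Gamma H_{cyl}$ makes the cylinder an admissible super/subsolution. Next a boundary gradient estimate from upper and lower barriers modeled on $K$: the conditions $H_{cyl}>0$ and $\sup_\Omega|H|\leq\inf_\Gamma H_{cyl}$ are precisely what prevents the graph $\Sigma(u_t)$ from becoming vertical along $\Gamma$. Then an interior gradient estimate, in which the Ricci lower bound $\inf_{\Nil}Ric_{\Nil}\geq-2\inf_\Gamma H^2_{cyl}$ enters through the Bochner/Simons-type computation controlling the gradient function $W$, reducing the interior gradient of $u_t$ to its boundary gradient. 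With $|u_t|_{C^1}$ bounded uniformly, equation (\ref{e6}) becomes uniformly elliptic with Hölder coefficients whose norms depend only on that $C^1$ bound, so interior and boundary Schauder estimates give a uniform $C^{2,\alpha}$ bound; extracting a convergent subsequence at $t=\sup S$ shows $\sup S\in S$, so $S$ is closed. As $[0,1]$ is connected, $S=[0,1]$, and the solution at $t=1$ is the desired $u$.

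The hard part will be the boundary gradient estimate and the accompanying construction of barriers that genuinely see the cylinder geometry; this is the geometric analogue of Serrin's sharp solvability condition for the prescribed mean curvature Dirichlet problem in $\R^n$, and it is exactly where the three hypotheses on $H_{cyl}$, on $Ric_{\Nil}$, and on $\sup_\Omega|H|$ must be combined and where the specific structure of $\Nil$ as a Killing submersion over $\R^2$ is exploited. Since this is the content of \cite[Theorem 1]{Dajczer}, I would ultimately defer the detailed estimates to that reference, my proposal being to organize them as above.
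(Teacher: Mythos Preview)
The paper does not prove Theorem~\ref{DP} at all; it is simply quoted from \cite[Theorem~1]{Dajczer}. Your outline (uniqueness from Theorem~\ref{MP}, existence by the method of continuity with height, boundary gradient, interior gradient, and Schauder estimates) is the correct architecture and is indeed how Dajczer--Lira proceed, so since you too ultimately defer the hard estimates to \cite{Dajczer}, your proposal is fully consistent with---and in fact more informative than---the paper's own treatment.
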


In this paper the ambient Ricci tensor in the $v$-direction is defined by
$$Ric_{\Nil}(v)=\sum_{i=1}^{2}\langle\overline{R}(w_i,v)v,w_i\rangle$$
where $\overline{R}$ is the curvature tensor and $\{w_1,w_2,v\}$ is an orthonormal basis.
\begin{remark}\label{r1}
Fix a point $p_0\in\Nil$ and take a unit vector $v\in T_{p_0}\Nil$. Let $\Pi$ be the plane orthogonal to the vector $v$. After a rotation around the vertical fiber passing by $p_0$, we can suppose that
$$\Pi=[E_1,aE_2+bE_3], \hspace{.3cm}a^2+b^2=1.$$ 
That is, $\{E_1,aE_2+bE_3\}$ is a orthonormal basis for $\Pi$ and thence 
$$v=-bE_2+aE_3.$$ 
Using the Riemannian connection $\overline{\nabla}$, we have
$$-2\tau^2\leq Ric_{\Nil}(v)\leq2\tau^2.$$
\end{remark}

Denote by $\Gamma\subset\R^2$ a piecewise $C^1$ smooth simple Jordan curve. Taking the parametrization
$$c:[a,b]\longrightarrow\Gamma$$ 
$$s\longmapsto c(s)\in\Gamma$$
consider the geodesic curvature $k(s)$ of $\Gamma$ at the point $c(s)\in\Gamma$, it is well known that when the fiber of the killing submersion are geodesic, then the mean curvature $H_{cyl}$ of the vertical cylinder $\pi^{-1}(\Gamma)$ is given by $$H_{cyl}=\dfrac{k(s)}{2}.$$ 
From the Ricci condition in Theorem \ref{DP} and remark \ref{r1}, $\tau$ must be satisfy
\begin{equation}\label{ee9}
\tau^2\leq\inf_s\left(\dfrac{k(s)}{2}\right)^2
\end{equation}
With this in mind, the Theorem \ref{DP} can be write in the  next form.
\begin{theorem}\label{DP1}(The Dirichelt problem)
Let $\Omega\subset\R^2$ be a domain with compact closure and $C^{2,\alpha}$ boundary. Suppose $H_{cyl}>0$ and $H\geq\vert\tau\vert$ for a constant $H$. Let $\phi\in C^{2,\alpha}(\Gamma)$ be given functions. If
$$2H\leq k(s),$$
where $k(s)$ denotes the geodesic curvature of $\Gamma$.
Then, there exists a unique function $u\in C^{2,\alpha}(\overline{\Omega})$ satisfying $u\vert_\Gamma=\phi$ whose Killing graph $\Sigma(u)$ has mean curvature $H$. 
\end{theorem}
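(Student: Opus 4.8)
The plan is to deduce Theorem \ref{DP1} directly from Theorem \ref{DP} by checking that the hypotheses of the former imply those of the latter; no new analysis is required, only a translation of the geometric conditions. The starting observation is that in $\Nil$ the fibers of the Killing submersion $\pi$ are geodesics, as one reads off the connection table (in particular $\overline{\nabla}_{E_3}E_3=0$). Consequently the mean curvature of the vertical cylinder $\pi^{-1}(\Gamma)$ over a curve $\Gamma\subset\R^2$ equals half its geodesic curvature, $H_{cyl}=k(s)/2$, so that the standing hypothesis $H_{cyl}>0$ is nothing but $k(s)>0$. The boundary regularity of $\Omega$ is left untouched.

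First I would rewrite the size bound on $H$. Since $H$ is a positive constant we have $\sup_\Omega\vert H\vert=H$, and $\inf_\Gamma H_{cyl}=\inf_s k(s)/2$; hence the inequality $\sup_\Omega\vert H\vert\leq\inf_\Gamma H_{cyl}$ of Theorem \ref{DP} is exactly $H\leq\inf_s k(s)/2$, i.e. $2H\leq k(s)$ for every $s$, which is precisely the hypothesis appearing in Theorem \ref{DP1}.

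Next I would verify the Ricci hypothesis of Theorem \ref{DP}. By Remark \ref{r1} the Ricci tensor of $\Nil$ is pinched by $-2\tau^2\leq Ric_{\Nil}(v)\leq 2\tau^2$, so $\inf_{\Nil}Ric_{\Nil}=-2\tau^2$, and the condition $\inf_{\Nil}Ric_{\Nil}\geq-2\inf_\Gamma H^2_{cyl}$ reduces to $\tau^2\leq\inf_s(k(s)/2)^2$, which is exactly inequality (\ref{ee9}). To produce (\ref{ee9}) from the data of Theorem \ref{DP1} I would chain the two standing inequalities: from $2H\leq k(s)$ one gets $H\leq\inf_s k(s)/2$ and, all quantities being positive, $H^2\leq\inf_s(k(s)/2)^2$; while $H\geq\vert\tau\vert$ gives $\tau^2\leq H^2$. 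Combining these yields $\tau^2\leq\inf_s(k(s)/2)^2$, as required.

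With the three substantive hypotheses of Theorem \ref{DP} now in force — $H_{cyl}>0$ recast as $k>0$, the size bound recovered as $2H\leq k(s)$, and the Ricci condition confirmed — Theorem \ref{DP} delivers the unique $u\in C^{2,\alpha}(\overline{\Omega})$ with $u\vert_\Gamma=\phi$ whose Killing graph has constant mean curvature $H$, which is the conclusion of Theorem \ref{DP1}. I do not expect a genuine obstacle here, since the statement is a reformulation rather than a fresh result; the only points demanding care are the sign and normalization conventions (the inward orientation of $K$, the upward-pointing normal, and the factor in $H_{cyl}=k(s)/2$), together with the correct use of the positivity of $H$, $\vert\tau\vert$, and $k(s)/2$ when squaring the chained inequalities.
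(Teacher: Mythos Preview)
Your proposal is correct and follows exactly the route the paper takes: the paper derives Theorem \ref{DP1} from Theorem \ref{DP} by using the connection table to identify $H_{cyl}=k(s)/2$ and Remark \ref{r1} to reduce the Ricci hypothesis to inequality (\ref{ee9}), and then states the reformulated result without further argument. Your write-up is in fact more explicit than the paper's, since you spell out how the chain $\vert\tau\vert\leq H\leq \inf_s k(s)/2$ yields (\ref{ee9}), a step the paper leaves implicit.
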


Once there, using Theorem \ref{DP1} we can prove an important theorem for sequences of solutions of the mean curvature equation.
\begin{theorem}\label{CT}(Compactness theorem) 
Let $\{u_n\}$ be a uniformly bounded sequences of solutions of the constant mean curvature equation (\ref{ee6}) in a bounded domain $\Omega$. Then there exists a subsequence which converges uniformly on compact subsets to a solution of (\ref{ee6}) in $\Omega$.\end{theorem}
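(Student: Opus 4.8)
The plan is to establish the Compactness Theorem by the standard two-step scheme familiar from quasilinear elliptic PDE theory: first derive interior gradient estimates that are uniform on compact subsets, then bootstrap these to uniform $C^{2,\alpha}$ bounds via Schauder theory and extract a convergent subsequence by Arzel\`a--Ascoli. The equation (\ref{e6}) is quasilinear and uniformly elliptic once the gradient is controlled, since the coefficients $\tfrac{1+\beta^2}{W}$, $\tfrac{1+\alpha^2}{W}$, $\tfrac{-2\alpha\beta}{W}$ depend only on $\alpha,\beta$ and stay bounded and uniformly elliptic precisely when $|\nabla_0 u_n|$ is bounded. So everything hinges on the gradient estimate.

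First I would fix an arbitrary compact subset $K\subset\Omega$ and a slightly larger compact set $K'$ with $K\subset\mathrm{int}(K')\subset K'\subset\Omega$. Using the uniform bound $|u_n|\le C_0$ together with Theorem \ref{DP1} (which guarantees solvability of the Dirichlet problem on appropriate subdomains and hence the existence of the barriers needed for the $H$-graph geometry in $\Nil$), I would produce an interior gradient estimate of the form $\sup_K|\nabla_0 u_n|\le C(K,K',C_0,H,\tau)$, independent of $n$. The geometric mechanism here is that a local bound on the height of the Killing graph $\Sigma(u_n)$, combined with the uniform mean curvature $H$ and the structure of the Killing submersion $\pi\colon\Nil\to\R^2$, forces a bound on the tilt of the graph away from the boundary; concretely one compares $\Sigma(u_n)$ with suitable $H$-graphs or uses the fact that an interior point with very large gradient would force $\Sigma(u_n)$ to be nearly vertical and violate the height bound through a barrier argument. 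This is exactly the interior estimate philosophy used by Spruck and by Hauswirth--Rosenberg--Spruck in the product-space setting, transplanted to $\Nil$ via the submersion.

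Given the uniform gradient bound on $K'$, the equation (\ref{e6}) restricted to graphs with $|\nabla_0 u_n|\le C$ becomes uniformly elliptic with coefficients that are smooth bounded functions of $\nabla_0 u_n$; interior $C^{1,\alpha}$ estimates (De Giorgi--Nash--Moser / Krylov--Safonov applied to the derivatives) followed by interior Schauder estimates then yield a uniform bound $\|u_n\|_{C^{2,\alpha}(K)}\le C_1(K)$, again independent of $n$. By Arzel\`a--Ascoli the sequence $\{u_n\}$ is precompact in $C^2(K)$, so a subsequence converges in $C^2(K)$ to some limit $u$. Exhausting $\Omega$ by a countable family of compact sets $K_1\subset K_2\subset\cdots$ with $\bigcup_j K_j=\Omega$ and applying a diagonal argument produces a single subsequence converging in $C^2$ on every compact subset to a function $u\in C^2(\Omega)$. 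Passing to the limit in (\ref{ee6}), $C^2$-convergence lets me interchange limit and derivatives so that $\div_{\R^2}(X_u)=2H$ holds on $\Omega$; hence $u$ is a solution of (\ref{ee6}).

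The main obstacle is the interior gradient estimate: the uniform ellipticity, the Schauder bootstrap, and the diagonal extraction are all standard once the gradient is controlled, but without it the equation degenerates (the coefficients lose ellipticity control as $W\to\infty$) and no compactness can be extracted. Deriving the gradient bound requires genuine use of the $\Nil$-geometry---the bundle curvature $\tau$, the geodesic-curvature/$H_{cyl}$ condition from Theorem \ref{DP1}, and the existence of local $H$-graph barriers---rather than a soft argument. I would therefore expect the heart of the proof to be the construction of these barriers or, equivalently, a maximum-principle argument (in the spirit of Theorem \ref{MP}) applied to an auxiliary function measuring the gradient, yielding the uniform interior control that drives the rest of the compactness argument.
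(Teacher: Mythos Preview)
Your proposal is correct and follows essentially the same route as the paper: invoke Theorem~\ref{DP1} to obtain interior $C^1$ and $C^2$ estimates for the solutions on compact subdomains, then apply Arzel\`a--Ascoli (with a diagonal exhaustion) to extract a subsequence converging to a solution. The paper's own proof is a two-sentence sketch of exactly this scheme, while you have spelled out the mechanism (barriers from the Dirichlet solvability, uniform ellipticity once the gradient is controlled, Schauder bootstrap) in more detail.
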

\begin{proof}
From Theorem \ref{DP1}, we obtain an interior estimate for the first and second derivative as well as to the solutions of the equation (\ref{ee6}). Therefore on compact subdomains we have the equicontinuity of the second derivatives. Consequently by Arzela-Ascoli's Theorem, we obtain a subsequence which converges uniformly on compact subsets to a solution of (\ref{ee6}) in $\Omega$. 
\end{proof}

\section{$H$-sections}
A section is the image of a map  
$$u:\Omega\subset\R^2\rightarrow\Nil,$$ 
such that $\pi\circ u=Id\vert_\Omega$, where $Id$ label the identity map restricted to the domain $\Omega$ and $\pi$ is the canonical projection from $\Nil$ over $\R^2$. Thus, identifying the domain $\Omega$ with its lift to $\R^2\times\{0\}$, we identify the section $u$ with the Killing graph $\Sigma(u)$ and by simplicity we denote this surface by $\Sigma_u$. 

In this section we are going to consider sections $u:\Omega\subset\R^2\rightarrow\Nil$ whose Killing graph $\Sigma_u$ has constant mean curvature (CMC-graphs) $H$. We call such Killing graphs, $H$-sections or $H$-surfaces. For a piece of curve $\gamma\subset\B$, we denote by $k(\gamma)$ its geodesic curvature. It has showed in \cite[Theorem 3.3]{Rosenberg1} the following theorem.
\begin{theorem}\label{S1}
Let $\pi:\Nil\rightarrow\R^2$ the  Killing submersion and let 
$$u:\Omega\rightarrow\Nil$$ 
be an $H$-section over a domain $\Omega\subset\R^2$. Let $U_0$ be a neighbourhood of an arc $\gamma\subset\partial\Omega$ and $\iota:U_0\rightarrow\Nil$ a section. \\ Assume that for any sequence $(x_n)$ of $\Omega$ which converge to a point $x\in\gamma$, the height of $u(x_n)$ goes to $+\infty$, that is, $u(x_n)-\iota(x_n)\rightarrow+\infty$, then $\gamma$ is a smooth curve with $k(\gamma)=2H$. If $H>0$, then $\gamma$ is convex with respect to $\Omega$ if and only if, the mean curvature vector $\overrightarrow{H}$ of $\Sigma_u$ points up along $\Sigma_u$. Moreover, $\Sigma_u$ converges to the vertical $H$-cylinder $\pi^{-1}(\gamma)$ with respect to the $C^k$-topology, for any $k\in\N$. 
\end{theorem}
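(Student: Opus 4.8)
```latex
The plan is to establish the three assertions of Theorem \ref{S1} in sequence, exploiting the Killing submersion structure to translate the vertical divergence of $u$ into geometric information about the arc $\gamma$. The central tool will be the mean curvature equation in its divergence form (\ref{ee6}), together with the fact that $\Sigma_u$ has bounded geometry away from its boundary (the interior estimates behind Theorem \ref{CT}).

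\textbf{Step 1: Smoothness and the identity $k(\gamma)=2H$.} I would first localize near an interior point of $\gamma$ and consider the sequence of translates $\Sigma_u^{(n)}:=\Sigma_u - t_n$ obtained by applying the vertical isometry generated by $\xi=\partial_z$, where $t_n\to+\infty$ is chosen so that the translated graphs stay at bounded height above a fixed interior reference point. Because $\xi$ is Killing, each $\Sigma_u^{(n)}$ is again an $H$-surface, and the uniform interior curvature estimates guarantee (via Arzel\`a--Ascoli, as in Theorem \ref{CT}) that a subsequence converges in the $C^k$-topology to a limit $H$-surface $\Sigma_\infty$. The hypothesis that $u(x_n)-\iota(x_n)\to+\infty$ for \emph{every} sequence $x_n\to x\in\gamma$ forces $\Sigma_\infty$ to be vertical over $\gamma$, i.e.\ invariant under $\xi$ and hence equal to $\pi^{-1}(\gamma)$. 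A complete $H$-surface that is a union of fibers projects to a curve whose associated vertical cylinder has mean curvature $H$; by the computation $H_{cyl}=k(s)/2$ recorded before Theorem \ref{DP1}, this yields that $\gamma$ is smooth with $k(\gamma)=2H$.

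\textbf{Step 2: The convexity/orientation dichotomy.} For the second assertion, I would compare the direction of the mean curvature vector $\overrightarrow{H}$ of $\Sigma_u$ with the inward-pointing normal of the cylinder $\pi^{-1}(\gamma)$ at the limit. Since $N=N^h+N^v$ splits into horizontal and vertical parts with $N^v=E_3/W$, the divergence of $u$ to $+\infty$ along $\gamma$ makes $W\to\infty$, so $N^h$ dominates and the limit normal becomes purely horizontal, tangent to the cylinder. The sign of the curvature of $\gamma$ as seen from $\Omega$ is then read off from whether $\overrightarrow{H}$ points toward or away from $\Omega$ along the fibers: when $H>0$ and $\overrightarrow{H}$ points up (in the $+\xi$ sense matched to the orientation of $N$), the cylinder $\pi^{-1}(\gamma)$ curves toward $\Omega$, which is precisely the statement that $\gamma$ is convex with respect to $\Omega$. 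This is an orientation bookkeeping argument using the connection coefficients $\overline{\nabla}$ listed in Section 2; the equivalence is proved by tracking the sign of $\langle \overrightarrow{H},\eta\rangle$ where $\eta$ is the inner conormal.

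\textbf{Step 3: $C^k$-convergence to the cylinder.} The $C^k$ convergence of $\Sigma_u$ to $\pi^{-1}(\gamma)$ is already produced by the compactness argument in Step 1, since the limit is independent of the subsequence (it is uniquely pinned down to be the vertical cylinder over $\gamma$), so every subsequence has a further subsequence converging to the same $\Sigma_\infty$, forcing full convergence. The main obstacle I anticipate is Step 1: one must justify rigorously that the interior estimates of Theorem \ref{CT} survive under the vertical translations and that the limit is genuinely a fiber-invariant surface rather than a graph that merely becomes steep. This requires showing that the vertical blow-down cannot develop a horizontal sheet, which follows from the strict monotonicity of the graph in the fiber direction combined with the uniform curvature bound; making this exclusion precise, and verifying that the convergence is in $C^k$ for all $k$ and not merely $C^2$, is where the essential work lies.
```
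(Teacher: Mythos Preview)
The paper does not give its own proof of Theorem~\ref{S1}: the result is quoted verbatim from \cite[Theorem~3.3]{Rosenberg1} (Rosenberg--Souam--Toubiana), so there is no in-paper argument to compare your proposal against. Your outline is in fact close in spirit to the argument in that reference: one uses that $H$-sections are stable (being transverse to the Killing field $\xi$), invokes the resulting uniform curvature estimates to take $C^k$ limits of vertical translates, and identifies the limit with the vertical cylinder $\pi^{-1}(\gamma)$, from which $k(\gamma)=2H$ and the convexity dichotomy follow.

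One point worth tightening: the curvature bounds you need do \emph{not} come from the PDE interior estimates underlying Theorem~\ref{CT}. Those estimates apply to solutions that are uniformly bounded on compact subsets of $\Omega$, whereas here $u$ is blowing up as you approach $\gamma$. The correct source is the intrinsic curvature estimate for stable $H$-surfaces in \cite{Rosenberg1}, which bounds the second fundamental form at a point in terms of its distance to the boundary of the surface, independently of any height bound. This is what survives under the vertical translations $\Sigma_u - t_n$ and lets you extract a convergent subsequence near $\gamma$. With that correction your Steps~1--3 constitute a reasonable sketch; the exclusion of a ``horizontal sheet'' in the limit that you flag in Step~3 is handled in \cite{Rosenberg1} precisely by the graphical (section) structure together with the divergence hypothesis.
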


As a consequence  of this theorem, we prove the following lemma.
\begin{lemma}\label{a15}
Let $u$ be a solution of (\ref{ee6}) in a domain $\Omega$ bounded in part by an arc $\gamma$ and suppose that $m\leq u\leq M$ on $\gamma$. Then, there is a constant $c=c(\Omega)$ such that for any compact $C^2$ sub-arc $\gamma^\prime\subset\gamma$, 
\begin{itemize}
\item[(i)] If $k(\gamma^\prime)\geq 2H$, with strict inequality except for isolated points, there is a neighbourhood $V$ of $\gamma^\prime$ in $\overline{\Omega}$ such that 
$$u\geq m-c$$ 
in $V$.
\item[(ii)] If $k(\gamma^\prime)>-2H$, there is a neighbourhood $V$ of $\gamma^\prime$ in $\overline{\Omega}$ such that 
$$u\leq M+c$$ 
in $V$.
\end{itemize}
\end{lemma}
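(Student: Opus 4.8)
The plan is to prove both estimates by the barrier method, with the maximum principle (Theorem \ref{MP}) as the comparison device and Theorem \ref{S1} governing where the barriers may become infinite. Since $\gamma'$ is a compact $C^2$ sub-arc, it suffices to establish the bound on a neighbourhood of each of its points and then cover $\gamma'$ by finitely many such neighbourhoods, taking $c$ to be the largest of the local constants; because every barrier will be built only from the local geometry of $\partial\Omega$ and the fixed value $H$, this $c$ depends on $\Omega$ alone and not on $u$, $m$, or $M$. The two parts are dual: (i) is a lower estimate obtained by placing a CMC-$H$ graph below $u$, and (ii) an upper estimate obtained by placing one above it. The crucial point, already visible here, is that on interior curves close to $\gamma'$ we have no a priori control of $u$, so a barrier meeting such a curve at a finite height would force $c$ to depend on $u$; the only way to obtain a $u$-independent constant is to use a barrier that diverges to $\mp\infty$ along an interior arc, so that it lies below (resp. above) $u$ there automatically.

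For part (i) I would fix a point of $\gamma'$ and form a lens-shaped domain $V\subset\overline{\Omega}$ adjacent to $\gamma'$, bounded by a short sub-arc of $\gamma'$ and an interior circular arc $\sigma$ of geodesic curvature $2H$ chosen so that $V$ is convex with $k(\partial V)\geq 2H$; under the standing hypothesis $H\geq|\tau|$ this makes Theorem \ref{DP1} applicable to $V$. I would then solve, for each $n$, the Dirichlet problem $L_H(w_n)=0$ with $w_n=m$ on the $\gamma'$-arc and $w_n=-n$ on $\sigma$, obtaining a monotone decreasing sequence. The limit $v$ is either a solution or diverges, and by Theorem \ref{S1} any arc of divergence must be smooth with $k=2H$ and with the convexity dictated by the direction of $\overrightarrow{H}$; the lens is engineered precisely so that the only admissible such arc is $\sigma$. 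Hence $v$ is finite near the $\gamma'$-arc, equals $m$ there, and tends to $-\infty$ along $\sigma$, with a uniform bound $v\geq m-c$ on a smaller neighbourhood $V'$ of the $\gamma'$-arc, where $c$ is controlled by the width and curvature of the lens, hence by $\Omega$. Since $v\leq m\leq u$ on $\gamma'$ and $v\to-\infty\leq u$ along $\sigma$ (with $u$ continuous, hence bounded below, on the compact arc $\sigma$), we have $v\leq u$ on $\partial V$; applying Theorem \ref{MP} to the two CMC-$H$ graphs $\Sigma(v)$ and $\Sigma(u)$ yields $v\leq u$ throughout $V$, whence $u\geq m-c$ on $V'$.

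Part (ii) is handled symmetrically: under $k(\gamma')>-2H$ I would build an upper barrier $w$ on an analogous lens, a CMC-$H$ graph with $w\geq M$ on the $\gamma'$-arc and $w\to+\infty$ along the interior arc, whose maximum is $M+c$ for a geometric $c=c(\Omega)$, and then apply Theorem \ref{MP} with $u\leq w$ on $\partial V$ to conclude $u\leq M+c$. The weaker curvature hypothesis suffices here because, for $H>0$ with upward mean curvature vector, the admissible divergence direction for the upward barrier requires only that $\gamma'$ not reach the forbidden value $k=-2H$, whereas the downward barrier in (i) must be solved on the convex side of $\gamma'$, where Theorem \ref{DP1} forces $k\geq 2H$. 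The main obstacle in both parts is exactly the control of the divergent barrier: one must confirm, via Theorem \ref{S1}, that the monotone limit stays finite near $\gamma'$ and becomes infinite only on the prescribed interior arc, and must extract a height bound $c$ depending solely on the geometry of $\Omega$ from the $C^k$-convergence of the barrier to the vertical cylinder over that arc. Once this quantitative barrier is in hand, the maximum principle closes both estimates at once.
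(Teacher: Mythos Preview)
Your barrier strategy is natural, but there is a genuine obstruction in part (i) that the paper's proof circumvents by a different route. You propose to build a lens $V$ bounded by a piece of $\gamma'$ (with $k\ge 2H$) and an interior arc $\sigma$ of curvature $2H$, \emph{both convex to $V$} so that Theorem~\ref{DP1} applies, and then to take the monotone limit of the solutions $w_n$ with $w_n=m$ on $\gamma'$ and $w_n=-n$ on $\sigma$. The difficulty is that a CMC-$H$ graph can take the value $-\infty$ on an arc only when that arc has curvature $-2H$ with respect to the domain (see the discussion opening Section~\ref{JS problem}); since your $\sigma$ has curvature $+2H$ with respect to $V$, the Jenkins--Serrin solution with $v=-\infty$ on $\sigma$ does \emph{not} exist on this lens. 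Consequently there is no reason for the decreasing sequence $\{w_n\}$ to converge near $\gamma'$: nothing you have invoked excludes $\mathfrak{D}=V$, i.e.\ $w_n\to-\infty$ on the whole interior, which would make the barrier vacuous. Conversely, if you choose $\sigma$ with curvature $-2H$ (so that $-\infty$ data are admissible there), then $V$ no longer satisfies the hypotheses of Theorem~\ref{DP1} and you cannot solve for $w_n$ without appealing to the Jenkins--Serrin existence results, which themselves rest on Lemma~\ref{a15}. Either way the argument is blocked.

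The paper avoids this trap: for (i) it builds a small domain $\Delta$ bounded by a sub-arc of $\gamma'$ and an interior arc $\Gamma'$, \emph{both} of curvature strictly greater than $2H$, and first proves (via stability curvature estimates and Theorem~\ref{S1}) that $u$ itself is bounded on $\Gamma'$; the point is that if $|u|$ were unbounded there, the graph would become asymptotic to a vertical $H$-cylinder, forcing $k(\Gamma')=\pm 2H$, which has been excluded. Only then does it compare with a finite sub-solution of the Dirichlet problem on $\Delta$ (furnished by \cite{Dajczer}). For (ii) the paper uses no barrier at all: it argues directly by contradiction, showing that an unbounded-from-above sequence $u(p_n)\to+\infty$ near $\gamma'$ would, again via curvature estimates and Theorem~\ref{S1}, force $k(\gamma')=-2H$, contradicting $k(\gamma')>-2H$. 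Your symmetric barrier for (ii) would also run into trouble when $k(\gamma')<2H$, since then $\gamma'$ is not an admissible $C$-arc and neither Theorem~\ref{DP1} nor the later Jenkins--Serrin machinery applies to your lens.
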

\begin{proof}
First suppose that $k>2H$ on some sub-arc $\gamma^{\prime}\subset\gamma$, let $p_0$ be the middle point of $\gamma^\prime$ and consider the curve $\gamma^\prime_1$ tangent to $\gamma^\prime$ at the point $p_0$ having $2H<k(\gamma^\prime_1)<k$ (with respect to the interior normal to $\Omega$), notice that $\gamma^\prime$ is outside of $\Omega$. 

Let $\Gamma$ be the curve joining the endpoints o $\gamma^\prime_1$ having curvature $k(\gamma^\prime_1)$ with respect to the outward pointing unit normal vector to $\Omega$. Finally consider the sub-arc $\Gamma^\prime\subset\Gamma$ which lies in $\Omega$. The sub-arc $\Gamma^\prime$ intersects the arc $\gamma^\prime$ in two points, these two points determine a segment of $\gamma^\prime$, which we will call $\gamma^\prime$ again. Now consider the domain $\Delta\subset\Omega$ bounded by $\Gamma^\prime$ and $\gamma^\prime$, where $k(\gamma^\prime)>2H$ and $k(\Gamma^\prime)>2H$ wit respect to the interior unit normal of $\Delta$. 

\begin{claim}
The function $u=u\vert_{\Gamma^\prime}$ satisfies $\vert u\vert\leq C$, for some constant $C$.
\end{claim}
\textbf{Proof of the Claim.} Denote by $\Sigma_u$ the graph of $u$ over $\Delta$. As $\Sigma_u$ is transverse to the Killing field $\xi$, then $\Sigma_u$ is stable, therefore we have curvature estimates.

Now suppose that there is a sequence of points $(p_n)$ in $\Delta$ such that $u(p_n)$ goes to $-\infty$. Then the tangent planes $T_{u(p_n)}\Sigma_u$ becomes vertical when $n$ goes to $+\infty$. Using the curvature estimates, we obtain a small fixed $\delta>0$, such that $\Sigma_u$ is locally the graph of a function over a ball of radius $\delta$, centered at the origin of $T_{u(p_n)}\Sigma_u$. From Theorem \ref{S1}, $\Gamma^\prime$ must have curvature $k(\Gamma^\prime)=-2H$ which contradicts our assumption, analogously in the case $u(p_n)$ goes to $+\infty$, we get a contradiction, proving the claim. 

It was shown in \cite{Dajczer}, that the Dirichlet problem 
\begin{equation*}
\left\{
  \begin{array}{ll}
    L(u)=2H, \hspace{.3cm} \textnormal{in} \hspace{.2cm} \Delta & \hbox{} \\
    u\vert_{\Gamma^\prime}=C, \hspace{.3cm} u\vert_{\gamma^\prime}=m & \hbox{} 
  \end{array}
\right.
\end{equation*}
has a sub-solution, so there is a constant $c=c(\Omega)$ such that
$$u\geq m-c.$$

On the other hand, for the case $(ii)$, suppose there exists a decreasing sequence $\{V_n\}$ of neighbourhood of $\gamma$, that is $V_n\supset V_{n+1}$, such that for each $n$, there exists a point $p_n\in V_n$ with $u(p_n)=u_n>n$. Notice that the mean curvature vector is pointing upwards and the graph of the function $u$ has constant mean curvature $H$. Using curvature estimates, we obtain a graph $\Sigma_{u_n}$ over the tangent plane $T_{u(p_n)}\Sigma(u)$, so from Theorem \ref{S1}, $\Sigma_{u_n}$ converges to the vertical cylinder $\pi^{-1}(\gamma)$, thus $\gamma$ must have constant geodesic curvature $-2H$, which is a contradiction. This completes the proof of the lemma.

\end{proof}

\subsection{The Jenkins-Serrin problem}\label{JS problem}
We are going assume that $H>0$  with respect to the upward unit
normal of the Killing surface $\Sigma_u$. Then if $u$ tends to $+\infty$ for any approach to a boundary arc $\gamma$, necessarily the curvature $k(\gamma)=2H$ is constant, while if $u$ tends to $-\infty$ on $\gamma$, $k(\gamma)=-2H$. Thus we must deal with non-convex domains $\Omega$ with $\partial\Omega$ piecewise $C^2$ and consists of three set of open arcs $\{A_i\}$, $\{B_i\}$ and $\{C_i\}$ satisfying $k(A_i)=2H$, $k(B_i)=-2H$ and $k(C_i)\geq2H$ respectively. The Jenkins-Serrin problem consist in to find  a solution of (\ref{ee6}) in $\Omega$ taking on the boundary values $+\infty$ on $A_i$, $-\infty$ on the $B_i$ and arbitrary continuous boundary data on the $C_i$.

In order to give a precise announcement of the Jenkins-Serrin problem, we consider the following definitions.
\begin{definition}\label{c1}(Admissible domain)
We say that a bounded domain $\Omega$ is admissible if it is simply connected and its $\partial\Omega$ is piecewise  $C^2$ and consists of three set of $C^2$ open arcs $\{A_i\}$, $\{B_i\}$ and $\{C_i\}$ satisfying $k(A_i)=2H$, $k(B_i)=-2H$ and $k(C_i)\geq2H$ respectively (with respect to the interior of $\Omega$). We suppose that no two of the arcs $A_i$ and no two of the arcs $B_i$ have a common endpoint.
\end{definition}
The Jenkins-Serrin problem is defined in the next definition.
\begin{definition}\label{c2}(The Jenkins-Serrin problem)
Given an admissible domain $\Omega$, the Jenkins-Serrin problem is to find a solution of (\ref{ee6}) in $\Omega$ which assumes the value $+\infty$ on each $A_i$, $-\infty$ on each $B_i$ and assigned continuous data on each of the open arcs $C_i$. Note that the continuous data are allowed to become unbounded at the endpoints. 
\end{definition}
\begin{remark}
A solution of the Jenkins-Serrin problem is called a Jenkins-Serrin solution.
\end{remark}

\begin{definition}\label{c3}
Let $\Omega$ be an admissible domain. We say that $\mathfrak{P}$ is an admissible polygon if $\mathfrak{P}$ is a simply domain contained in $\overline{\Omega}$ with $\partial\mathfrak{P}$ piecewise smooth consisting of arcs of constant curvature $k=\pm2H$ with vertices chosen from among the endpoints of the families $\{A_i\}$, $\{B_i\}$ and $\{C_i\}$. For an admissible $\mathfrak{P}$, let $\alpha(\mathfrak{P})$ and $\beta(\mathfrak{P})$ be the total length of the arcs $\partial\mathfrak{P}$ belonging to $\{A_i\}$ and $\{B_i\}$ respectively. Finally, let $l(\mathfrak{P})$ be the perimeter of $\mathfrak{P}$ and $A(\mathfrak{P})$ be the area of $\mathfrak{P}$.
\end{definition}

\subsection{Flux formulas}
In this section, we deal with flux formulas for $H$-sections which are the crucial tool to obtain the Jenkins-Serrin solution of the Jenkins-Serrin problem.
 
We have denoted by $X_u$ the negative of the projection via $\pi$ of the horizontal part of the normal vector $N$. \newline
Let $u\in C^2(\Omega)\cap C^1(\overline{\Omega})$ be a solution of (\ref{ee6}) in a domain $\Omega$. Then, integrating (\ref{ee6}) over $\Omega$ gives 
\begin{equation}\label{a8}
2HA(\Omega)=\int_{\partial\Omega}\langle X_u,\nu\rangle ds
\end{equation}
where $A(\Omega)$ denotes the area of $\Omega$ and $\nu$ is the outer conormal to $\partial\Omega$. The right hand integral in (\ref{a8}) is called $the$ $flux$ of $u$ across $\Omega$. Let $\gamma$ be a subarc of $\partial\Omega$ (homeomorphic to $[0,1]$). Even if $u$ is not differentiable on $\gamma$ we can define the flux of $u$ across $\gamma$ as follows.
\begin{definition}
Choose $\zeta$ to be a simple smooth curve in $\Omega$ so that $\gamma\cup\zeta$ bounds a simple connected domain $\Delta_\zeta$. We then define the flux of $u$ across $\gamma$ to be 
\begin{equation}\label{a9}
F_u(\gamma)=2HA(\Delta_\zeta)-\int_\zeta\langle X_u,\nu\rangle ds.
\end{equation}
\end{definition}
Notice that the integral in (\ref{a9}) is well defined as an improper integral. To see that, this definition is independent of $\zeta$, let $\zeta^\prime$ be another choice of curve and consider the 2-chain $R$ with oriented boundary $\zeta^\prime-\zeta$. By the Divergence Theorem and equation (\ref{ee6}) we have
\begin{equation*}
2HA(\Delta_{\zeta^\prime})-2HA(\Delta_\zeta) =\int_{\zeta^\prime}\langle X_u,\nu\rangle ds-\int_\zeta\langle X_u,\nu\rangle ds.
\end{equation*}
Therefore, the definition makes sense. Thus, if $u\in C^1(\Omega\cup\gamma)$, then
$$F_u(\gamma)=\int_\gamma\langle X_u,\nu\rangle ds.$$
Thus, we obtain the following lemma.
\begin{lemma}\label{a12}
Let $u$ be a solution of (\ref{ee6}) in a domain $\Omega$ and let $\zeta$ be a piecewise $C^1$ curve in $\overline{\Omega}$. Then
$$2HA(\Omega)=\int_{\partial\Omega}\langle X_u,\nu\rangle ds \hspace{.3cm} \textnormal{and} \hspace{.3cm} \vert \int_\zeta\langle X_u,\nu\rangle ds\vert\leq \vert\zeta\vert.$$
\end{lemma}
Now we will prove some interesting lemmas.
\begin{lemma}\label{l2} 
Let $\Omega$ be a domain bounded in part by a piecewise $C^2$ arc $\zeta$ satisfying $k(\zeta)\geq 2H$. Let $u$ be a solution of (\ref{ee6}) in $\Omega$ which is continuous in $\zeta$. Then
\begin{equation}\label{a10}
\vert\int_\zeta\langle X_u,\nu\rangle ds\vert<\vert \zeta\vert.
\end{equation}
\end{lemma}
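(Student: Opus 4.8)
The plan is to start from the elementary pointwise estimate $|X_u|<1$ and reduce the lemma to excluding the equality case in Lemma~\ref{a12}. Writing $X_u=\widetilde{X}_u/W$ with $|\widetilde{X}_u|^2=\alpha^2+\beta^2=W^2-1$, one gets $|X_u|^2=1-W^{-2}<1$ at every point of $\Omega$, hence $|\langle X_u,\nu\rangle|<1$ pointwise. If $u$ were of class $C^1$ up to the compact arc $\zeta$, then $\langle X_u,\nu\rangle$ would be continuous and strictly less than $1$ on $\zeta$, hence bounded away from $1$ by compactness, and integrating would give (\ref{a10}) at once. Thus the entire difficulty is the possible blow-up of $\nabla_0 u$ as one approaches $\zeta$, for which the flux is only defined as the improper integral in (\ref{a9}); the role of the hypothesis $k(\zeta)\ge 2H$ will be precisely to rule out such a blow-up.

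I would argue by contradiction. By Lemma~\ref{a12} we already have $|\int_\zeta\langle X_u,\nu\rangle\df s|\le|\zeta|$, so suppose the equality $\int_\zeta\langle X_u,\nu\rangle\df s=|\zeta|$ holds (the value $-|\zeta|$ is treated the same way, and will in fact be impossible because $k(\zeta)\ge 2H>-2H$). Approximating $\zeta$ by a family of interior curves $\zeta_t$ converging to $\zeta$, with $\Delta_t$ the thin region they bound, the defining relation (\ref{a9}) together with the divergence identity (\ref{ee6}) gives $\int_{\zeta_t}\langle X_u,\nu\rangle\df s\to\pm|\zeta|$ as $|\zeta_t|\to|\zeta|$ and $A(\Delta_t)\to 0$. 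Since $|\langle X_u,\nu\rangle|<1$ pointwise, saturation of the integral forces $\langle X_u,\nu\rangle\to 1$ along $\zeta_t$, that is $W\to\infty$ and $X_u\to\nu$; geometrically the unit normal $N$ of $\Sigma_u$ becomes horizontal and $\Sigma_u$ becomes vertical over $\zeta$.

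To reach a contradiction I would convert this verticality into divergence of the heights. Since $\Sigma_u$ is transverse to the Killing field $\xi$ it is stable, so it carries uniform curvature estimates; translating $\Sigma_u$ back by the heights along a sequence $x_n\to\zeta$ and passing to the limit, the limit surface is a piece of the vertical cylinder $\pi^{-1}(\zeta)$. This is the situation of Theorem~\ref{S1}, where $u(x_n)\to+\infty$ along $\zeta$, contradicting the assumed continuity (hence boundedness) of $u$ on the compact arc $\zeta$; the symmetric argument in the $-|\zeta|$ case would force $k(\zeta)=-2H$, again impossible under $k(\zeta)\ge 2H$. This contradiction shows the equality cannot occur and yields the strict inequality (\ref{a10}). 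The local height bounds of Lemma~\ref{a15}, available near $\zeta$ because $k(\zeta)\ge 2H>-2H$, may be used to make the limiting-surface argument rigorous.

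The main obstacle is exactly this last passage: turning the analytic statement that the flux saturates (equivalently $\langle X_u,\nu\rangle\to 1$, i.e.\ an unbounded but possibly integrable gradient) into the geometric statement that the heights diverge. A steep gradient near $\zeta$ need not by itself force $u$ to be unbounded, and it is only the curvature hypothesis $k(\zeta)\ge 2H$ --- through the barrier and curvature-estimate content behind Theorem~\ref{S1} and Lemma~\ref{a15} --- that excludes a finite-height spike of $\sqrt{\mathrm{dist}}$ type. Controlling the convergence of the shifted graphs to $\pi^{-1}(\zeta)$ and identifying the sign of the divergence uniformly along $\zeta$ is where the care is needed.
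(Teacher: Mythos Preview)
Your argument has a real gap, and you correctly identify it yourself: from the saturation $\int_{\zeta_t}\langle X_u,\nu\rangle\df s\to|\zeta|$ you only obtain $\langle X_u,\nu\rangle\to 1$ (hence $W\to\infty$) in an integrated sense near $\zeta$, and even if this held pointwise it would only say the tangent planes become vertical, not that the heights $u(x_n)$ diverge. Theorem~\ref{S1} is an implication in the opposite direction --- from $u\to+\infty$ one deduces $k(\gamma)=2H$ and convergence to the cylinder --- so invoking it to conclude $u\to+\infty$ is circular. A bounded graph can meet a boundary arc with vertical tangent plane (your own $\sqrt{\mathrm{dist}}$ model), and nothing you wrote excludes this; the curvature-estimate iteration you hint at would have to produce points $y_n$ with $|u(y_n)-u(x_n)|\approx\delta$ and keep climbing while controlling that the base points remain in $\Omega$ and near $\zeta$, and you do not supply this. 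Lemma~\ref{a15} gives height bounds, not gradient bounds, so it does not directly help here either.

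The paper's proof bypasses all of this with a short direct comparison. One localises to a small subarc $\gamma\subset\zeta$ around $p\in\zeta$, and on $\Omega_\epsilon=\Omega\cap B_\epsilon(p)$ uses the hypothesis $k(\zeta)\geq 2H$ to solve, via Theorem~\ref{DP1}, the Dirichlet problem for a function $v$ with $v=u+1$ on $\gamma$ and $v=u$ on the rest of $\partial\Omega_\epsilon$. Then Lemma~\ref{l1} together with the divergence theorem gives
\[
0<\int_{\Omega_\epsilon}\langle\nabla_0(v-u),\,X_v-X_u\rangle
=\int_\gamma\langle X_v-X_u,\nu\rangle\df s,
\]
hence $F_u(\gamma)<F_v(\gamma)\leq|\gamma|$; summing over a cover of $\zeta$ yields the strict inequality. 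So the curvature assumption enters only to guarantee solvability of the auxiliary Dirichlet problem, and no contradiction or limiting-surface argument is needed.
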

\begin{proof}
It is suffices to prove (\ref{a10}) for a small subarc $\gamma$ of $\zeta$. To this end let $p\in\zeta$ and let $\Omega_\epsilon=\Omega\cap B_\epsilon(p)$. Then by the Theorem \ref{DP1}, there is a solution $v$ of (\ref{ee6}) in $\Omega_\epsilon$ with $v=u+1$ on $\gamma$ and $v=u$ on the remainder of the boundary. Set $w=v-u$, then by Lemma \ref{l1}
$$0<\int_{\Omega_\epsilon}\langle \nabla_0 w,X_v-X_u\rangle dv=\int_\gamma\langle X_v-X_u,\nu\rangle ds.$$
Thence $F_u(\gamma)<F_v(\gamma)\leq\vert\gamma\vert.$ 
\end{proof}
\begin{lemma}\label{a16}
Let $\Omega$ be a domain bounded in part by an arc $\gamma$ and let $u$ be a solution of (\ref{ee6}) in $\Omega$. Then,
\begin{itemize}
\item[(i)] if $u$ tends to $+\infty$ on $\gamma$, we have $\int_\gamma\langle X_u,\nu\rangle ds=\vert\gamma\vert$,
\item[(ii)] if $u$ tends to $-\infty$ on $\gamma$, we have $\int_\gamma\langle X_u,\nu\rangle ds=-\vert\gamma\vert$.
\end{itemize}
\end{lemma}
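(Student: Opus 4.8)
The plan is to use the two facts already available: that the flux $F_u(\gamma)$ is independent of the auxiliary curve $\zeta$ used to define it, and that $X_u=\pi_*(N^h)$ with $|X_u|<1$ in the interior. Recall $F_u(\gamma)=2HA(\Delta_\zeta)-\int_\zeta\langle X_u,\nu\rangle\,ds$, where on $\zeta$ the conormal $\nu$ is the outer conormal of $\Delta_\zeta$ and hence points away from $\gamma$. I would evaluate this expression along a sequence of curves $\zeta_n$ collapsing onto $\gamma$ from inside $\Omega$; since the left-hand side is the same number for every admissible curve, it suffices to compute the limit of the right-hand side along $\zeta_n$.

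For the curves, I fix a compact $C^2$ subarc $\gamma'\subset\gamma$ parametrized by arc length $s\in[0,L]$, let $n(s)$ be the inner unit normal, and set $\gamma_t(s)=\exp_{\gamma'(s)}(t\,n(s))$ for small $t>0$. For $t$ small these equidistant curves $\zeta_n:=\gamma_{t_n}$ (with $t_n\downarrow 0$) lie in $\Omega$, bound together with $\gamma'$ a region $\Delta_{\zeta_n}$ with $A(\Delta_{\zeta_n})\to 0$, and satisfy $|\zeta_n|\to L=|\gamma'|$; moreover the speed $J_n(s)=|\partial_s\gamma_{t_n}(s)|$ converges to $1$ and stays bounded, say $J_n(s)\le C$.

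The crux is the behaviour of $X_u$ as $t_n\downarrow 0$. Since $u\to+\infty$ on $\gamma$, Theorem \ref{S1} applies at each interior point $\gamma'(s)$: along $\gamma_{t_n}(s)\to\gamma'(s)$ one has $u\to+\infty$, so $\Sigma_u$ converges in $C^k$ to the vertical cylinder $\pi^{-1}(\gamma)$. In particular the upward unit normal $N$ of $\Sigma_u$ tends to the horizontal unit normal of the cylinder, whose $\pi$-projection is the outer normal $\nu_\gamma$ of $\Omega$ (consistent with $\nabla_0 u$ pointing toward $\gamma$, where $u=+\infty$); hence $X_u=\pi_*(N^h)\to\nu_\gamma$ and $|X_u|\to 1$. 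Since $\nu_{\zeta_n}\to-\nu_\gamma$ along $\zeta_n$, I obtain the pointwise limit $\langle X_u(\gamma_{t_n}(s)),\nu_{\zeta_n}\rangle\to-1$ for each interior $s$. Writing $\int_{\zeta_n}\langle X_u,\nu\rangle\,ds=\int_0^L\langle X_u(\gamma_{t_n}(s)),\nu_{\zeta_n}\rangle\,J_n(s)\,ds$, the integrand is bounded in absolute value by $J_n(s)\le C$, so the dominated convergence theorem gives $\int_{\zeta_n}\langle X_u,\nu\rangle\,ds\to -L=-|\gamma'|$. Therefore $F_u(\gamma')=2HA(\Delta_{\zeta_n})-\int_{\zeta_n}\langle X_u,\nu\rangle\,ds\to 0+|\gamma'|=|\gamma'|$, and as the left-hand side is constant in $n$ we conclude $F_u(\gamma')=|\gamma'|$. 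Exhausting $\gamma$ by such subarcs $\gamma'$ and using additivity of the flux together with $|F_u(\gamma\setminus\gamma')|\le|\gamma\setminus\gamma'|$ yields $\int_\gamma\langle X_u,\nu\rangle\,ds=F_u(\gamma)=|\gamma|$, proving (i).

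The main obstacle is precisely the justification that $\langle X_u,\nu\rangle\to-1$ along the collapsing curves; this is where the geometric input of Theorem \ref{S1} (convergence of $\Sigma_u$ to the vertical cylinder, which forces $N$ to become horizontal and aligned with $\nu_\gamma$) is essential, and the dominated convergence argument is what lets me pass this pointwise limit through the integral while absorbing the uncontrolled behaviour near the endpoints of $\gamma$, where only $|X_u|\le 1$ is available. Part (ii) is entirely analogous: when $u\to-\infty$ on $\gamma$ one has $k(\gamma)=-2H$ and $\Sigma_u$ again converges to $\pi^{-1}(\gamma)$, but now the horizontal projection of $N$ points to $-\nu_\gamma$, so $X_u\to-\nu_\gamma$, giving $\langle X_u,\nu_{\zeta_n}\rangle\to+1$ and hence $F_u(\gamma)=2HA(\Delta_{\zeta_n})-\int_{\zeta_n}\langle X_u,\nu\rangle\,ds\to-|\gamma|$.
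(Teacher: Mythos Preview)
Your argument is correct and takes essentially the same route as the paper: both observe that as one approaches $\gamma$ the normal $N$ becomes horizontal, forcing $\langle X_u,\nu\rangle\to 1$ (resp.\ $-1$), and then pass to the limit in the flux; your version via collapsing equidistant curves and dominated convergence is just a more careful implementation of the paper's one-line estimate $\langle X_u,\nu\rangle\ge 1-\epsilon$ near $\gamma$. One small caution on signs: in the paper's conventions $X_u=-\pi_*(N^h)$, and the horizontal limit of $N$ projects to the \emph{inner} normal of $\Omega$, so your two sign slips cancel and the conclusion $X_u\to\nu_\gamma$ is correct for the reason you also note parenthetically (namely $X_u\approx\nabla_0 u/|\nabla_0 u|$ points toward $\gamma$).
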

\begin{proof}
Suppose $u\rightarrow+\infty$ on $\gamma$. Notice that the upwards normal vector $N$ on the surface $\Sigma_u$ is becoming horizontal when we approach of $\gamma$. Then at points sufficiently near to $\gamma$, we have
$$\langle X_u,\nu\rangle\geq1-\epsilon,$$
where $\nu$ is the outer conormal to $\gamma\subset\partial\Omega$, and $\epsilon>0$ small enough. Consequently
$$\int_\gamma\langle X_u,\nu\rangle ds\geq\int_\gamma(1-\epsilon)ds,$$
which implies 
$$\int_\gamma\langle X_u,\nu\rangle ds\geq\vert\gamma\vert.$$  

On the other hand, if $u\rightarrow-\infty$ on $\gamma$, we have
\begin{equation}\label{a11}
\langle X_u,\nu\rangle\leq-1+\epsilon,
\end{equation}
at points sufficiently near $\gamma$, and for $\epsilon>0$ small enough.

As $\langle X_u,\nu\rangle<0$, Lemma (\ref{a12}) implies
\begin{equation}\label{a13}
F_u(\gamma)\geq-\vert\gamma\vert.
\end{equation} 
Now from (\ref{a11})
$$\int_\gamma\langle X_u,\nu\rangle ds\leq\int_\gamma(-1+\epsilon)ds$$
for all $\epsilon>0$, small enough. Thence
\begin{equation}\label{a14}
F_u(\gamma)\leq-\vert\gamma\vert.
\end{equation}
We conclude from (\ref{a13}) and (\ref{a14}) that $F_u(\gamma)=-\vert\gamma\vert.$
\end{proof}
The following lemma is a simple extension of Lemma \ref{a16}.
\begin{lemma}\label{a17}
Let $\Omega$ be a domain bounded in part by an arc $\gamma$ and let $\{u_n\}$ be a sequence of solutions of (\ref{ee6}) in $\Omega$ with each $u_n$ continuous on $\gamma$. Then
\begin{itemize}
\item[(i)] if the sequence tends to $+\infty$ con compact subsets of $\gamma$ while remaining uniformly bounded on compact subsets of $\Omega$, we have
$$\lim_{n\longrightarrow+\infty}\int_\gamma\langle X_{u_n},\nu\rangle ds=\vert \gamma\vert$$
\item[(ii)] if the sequence tends to $-\infty$ con compact subsets of $\gamma$ while remaining uniformly bounded on compact subsets of $\Omega$, we have
$$\lim_{n\longrightarrow+\infty}\int_\gamma\langle X_{u_n},\nu\rangle ds=-\vert \gamma\vert$$
\end{itemize}
\end{lemma}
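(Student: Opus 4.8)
The plan is to deduce Lemma \ref{a17} directly from Lemma \ref{a16} by combining uniform convergence on compact subsets with the uniform bound $|\langle X_{u_n},\nu\rangle|\leq 1$ that is available pointwise (since $X_{u_n}$ is the normalized horizontal projection, so $|X_{u_n}|<1$), together with the absolute flux estimate $|F_{u_n}(\gamma)|\leq|\gamma|$ from Lemma \ref{a12}. The key observation is that the conclusion of Lemma \ref{a16} is pointwise in $n$ an \emph{equality}, but here each individual $u_n$ need not blow up on $\gamma$; only the sequence does. So the argument must be an approximation/exhaustion argument rather than a direct application.

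First I would prove part (i). Fix $\delta>0$ and choose a compact subarc $\gamma'\subset\gamma$ with $|\gamma\setminus\gamma'|<\delta$. On $\gamma'$ the sequence $u_n\to+\infty$ uniformly, so exactly as in the proof of Lemma \ref{a16} the upward normal $N$ to $\Sigma_{u_n}$ becomes horizontal along $\gamma'$; hence for every $\epsilon>0$ there is $n_0$ such that for all $n\geq n_0$ we have $\langle X_{u_n},\nu\rangle\geq 1-\epsilon$ at points near $\gamma'$. Integrating over $\gamma'$ gives $\int_{\gamma'}\langle X_{u_n},\nu\rangle\,ds\geq (1-\epsilon)|\gamma'|$. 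On the complementary piece $\gamma\setminus\gamma'$ I use the crude bound $|\langle X_{u_n},\nu\rangle|\leq 1$, which costs at most $|\gamma\setminus\gamma'|<\delta$. Combining,
\begin{equation*}
\int_\gamma\langle X_{u_n},\nu\rangle\,ds \geq (1-\epsilon)|\gamma'|-\delta \geq (1-\epsilon)(|\gamma|-\delta)-\delta.
\end{equation*}
Since $|\langle X_{u_n},\nu\rangle|\leq 1$ also forces the upper bound $\int_\gamma\langle X_{u_n},\nu\rangle\,ds\leq|\gamma|$, letting first $n\to\infty$, then $\epsilon\to 0$ and $\delta\to 0$ pins the limit at $|\gamma|$. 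Part (ii) is the mirror image: on the compact subarc $\gamma'$ where $u_n\to-\infty$ uniformly one gets $\langle X_{u_n},\nu\rangle\leq -1+\epsilon$ near $\gamma'$, and the same exhaustion gives $\limsup_n\int_\gamma\langle X_{u_n},\nu\rangle\,ds\leq -|\gamma|$, while $|\langle X_{u_n},\nu\rangle|\leq 1$ gives the matching lower bound $-|\gamma|$.

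The main subtlety, and the step I would be most careful about, is the transition from "$u_n\to+\infty$ uniformly on the compact arc $\gamma'$" to "the normal becomes horizontal, so $\langle X_{u_n},\nu\rangle\geq 1-\epsilon$ near $\gamma'$" \emph{uniformly in $n$}. In Lemma \ref{a16} this was argued for a single solution blowing up; here one must check that the uniform boundedness of $\{u_n\}$ on compact subsets of $\Omega$, combined with the curvature estimates for stable $H$-sections (Theorem \ref{S1} and the stability used in Lemma \ref{a15}), makes the geometric convergence of $\Sigma_{u_n}$ toward the vertical cylinder $\pi^{-1}(\gamma')$ locally uniform in $n$. Once that uniform verticality near $\gamma'$ is established, everything else is the elementary splitting of the integral over $\gamma'$ and $\gamma\setminus\gamma'$ described above, so I do not expect any real obstruction beyond making that uniformity precise.
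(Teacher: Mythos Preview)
Your proposal is correct and is exactly the natural way to carry out what the paper leaves implicit: the paper gives no proof of Lemma \ref{a17} at all, merely stating that it ``is a simple extension of Lemma \ref{a16}.'' Your exhaustion argument---restricting to a compact subarc $\gamma'\subset\gamma$ where the divergence is uniform, invoking the pointwise inequality $\langle X_{u_n},\nu\rangle\geq 1-\epsilon$ (respectively $\leq -1+\epsilon$) from the proof of Lemma \ref{a16}, and controlling the remainder $\gamma\setminus\gamma'$ by the crude bound $|\langle X_{u_n},\nu\rangle|\leq 1$---is precisely the extension the paper has in mind, and your identification of the uniformity-in-$n$ issue (handled via stability and curvature estimates, together with the hypothesis that $\{u_n\}$ is uniformly bounded on compact subsets of $\Omega$) is the only genuine point requiring care.
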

We also need the next lemma.
\begin{lemma}\label{b9}
Let $\Omega$ be a domain bounded in part by an arc $\gamma$ with $k(\gamma)=2H$ and let $\{u_n\}$ be a sequence of solutions of (\ref{ee6}) in $\Omega$ with each $u_n$ continuous on $\gamma$. Then if the sequence diverges to $-\infty$ uniformly on compact subsets of $\Omega$ while remaining uniformly bounded on compact subsets of $\gamma$, we have
$$\lim_{n\longrightarrow+\infty}\int_\gamma\langle X_u,\nu\rangle ds=\vert\gamma\vert.$$
\end{lemma}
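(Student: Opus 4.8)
The plan is to sandwich the flux between the universal upper bound $|\gamma|$ and a lower bound that also tends to $|\gamma|$, the entire content lying in showing that the projected horizontal normal $X_{u_n}$ becomes the outer conormal $\nu$ along $\gamma$. First I would record the trivial upper bound: from (\ref{es5}) one has $|X_{u_n}|^2=(\alpha^2+\beta^2)/W^2<1$, so $\langle X_{u_n},\nu\rangle<1$ pointwise and $\int_\gamma\langle X_{u_n},\nu\rangle\,ds<|\gamma|$ for every $n$ (this is also Lemma \ref{l2}); in particular $\limsup_n\int_\gamma\langle X_{u_n},\nu\rangle\,ds\le|\gamma|$. Since $|\langle X_{u_n},\nu\rangle|\le1$ supplies a dominating function, it then suffices to prove that $\langle X_{u_n},\nu\rangle\to 1$ uniformly on every compact subarc $\gamma'\subset\subset\gamma$: dominated convergence gives $\int_{\gamma'}\langle X_{u_n},\nu\rangle\,ds\to|\gamma'|$, and exhausting $\gamma$ by such $\gamma'$ yields $\liminf_n\int_\gamma\langle X_{u_n},\nu\rangle\,ds\ge|\gamma|$, closing the argument against the upper bound.

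The geometric mechanism driving the main step is the following. On a fixed $\gamma'\subset\subset\gamma$ the functions $u_n$ stay uniformly bounded, while at interior points arbitrarily close to $\gamma'$ they tend to $-\infty$; hence along the inner conormal the graphs $\Sigma_{u_n}$ fall from bounded height to $-\infty$ over a horizontal distance shrinking to zero, which forces their tangent planes along $\gamma'$ to become vertical. To make this rigorous I would argue exactly as in the proof of Lemma \ref{a15}: the graphs $\Sigma_{u_n}$ are transverse to the Killing field $\xi$, hence stable, so they enjoy uniform curvature estimates. If $\langle X_{u_n},\nu\rangle$ failed to converge to $1$ uniformly on $\gamma'$, there would be $\delta>0$, a subsequence, and points $p_n\in\gamma'$ with $\langle X_{u_n}(p_n),\nu\rangle\le 1-\delta$, i.e. the unit normal of $\Sigma_{u_n}$ over $p_n$ stays bounded away from the horizontal outward direction. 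Passing to a further subsequence with $p_n\to p\in\gamma'$ and, using the uniform bound on $\gamma'$, $u_n(p_n)\to h$, the vertically translated graphs $\Sigma_{u_n}-u_n(p_n)\xi$ subconverge by the curvature estimates and the Compactness Theorem \ref{CT} to a CMC-$H$ surface $\Sigma_\infty$ through $(p,h)$ whose normal there is not horizontal. But because $u_n\to-\infty$ on compact subsets of $\Omega$, the translated graphs still diverge on the interior side of $\gamma$, so $\Sigma_\infty$ cannot be a graph there and must coincide near $p$ with the vertical cylinder $\pi^{-1}(\gamma)$ — which is precisely the convergence guaranteed by Theorem \ref{S1} since $k(\gamma)=2H$. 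Its normal at $(p,h)$ is then horizontal, contradicting the choice of $p_n$; hence $\langle X_{u_n},\nu\rangle\to 1$ uniformly on $\gamma'$.

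The delicate point, and the one I expect to be the main obstacle, is not the verticality but its \emph{sign}: one must exclude $\langle X_{u_n},\nu\rangle\to-1$. This is pinned down by the direction of descent. Near $\gamma'$ the function $u_n$ is bounded on $\gamma'$ and tends to $-\infty$ just inside, so $\partial u_n/\partial\nu\to+\infty$ while the tangential derivative remains bounded; writing $X_{u_n}=(\nabla_0u_n+\tau(y,-x))/W_n$ from (\ref{es5}), the bounded rotational term $\tau(y,-x)$ becomes negligible against $\nabla_0u_n$, so $X_{u_n}\approx\nabla_0u_n/|\nabla_0u_n|$, which points toward increasing $u_n$, i.e. outward along $+\nu$. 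This forces the limiting horizontal normal in the previous paragraph to be the \emph{outward} one, giving $\langle X_{u_n},\nu\rangle\to+1$ rather than $-1$, consistent with the orientation statement of Theorem \ref{S1} (mean curvature vector pointing up). With the uniform limit on each $\gamma'$ established and $|\langle X_{u_n},\nu\rangle|\le1$ as dominating function, dominated convergence and exhaustion of $\gamma$ by the $\gamma'$, combined with the upper bound of the first paragraph, give $\lim_n\int_\gamma\langle X_{u_n},\nu\rangle\,ds=|\gamma|$.
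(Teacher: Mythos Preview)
Your proposal is correct and follows essentially the same approach as the paper: both argue that $X_{u_n}\to\nu$ (equivalently $\langle X_{u_n},\nu\rangle\to 1$) using that the graphs become vertical near $\gamma$, and then integrate. The paper's proof is extremely terse---it simply asserts this convergence and integrates---whereas you supply the details (curvature estimates, the compactness/contradiction argument, the sign discussion excluding $-1$, and the dominated-convergence/exhaustion step), so your write-up is in fact a fleshed-out version of what the paper leaves implicit.
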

\begin{proof}
Note that the sequence $\{X_{u_n}=-\pi_*(N^h_{u_n})\}$ converges uniformly to the outer normal $\nu$ on compact subsets of $\Omega$. This implies
\begin{equation}\label{e13}
\lim_{n\longrightarrow+\infty}\langle X_{u_n},\nu\rangle=1
\end{equation}
We obtain the lemma by integrating equation (\ref{e13}).
\end{proof}


\subsection{Divergence lines}
Now we focus our attention in the study of convergence  of $H$-sections $\{u_n\}$ over a domain $\Omega\subset\R^2$.

Remember that, we have denoted by $Gu_m$ the gradient in $\Nil$ of the function $u_n$. We beginning this section with the next lemma.
\begin{lemma}\label{b1}
Let $p\in\Omega$ and suppose that $\vert Gu_n(p)\vert$ is uniformly bounded. Then, there exists a subsequence of $\{v_n=u_n-u_n(p)\}$ converging uniformly to a solution of (\ref{ee6}) in a  neighbourhood of $p$ in $\Omega$. 
\end{lemma}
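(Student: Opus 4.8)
The claim is that if $|Gu_n(p)|$ is uniformly bounded, then a subsequence of the vertically translated sections $v_n = u_n - u_n(p)$ converges to a solution near $p$. Let me think about what this is really saying and how to prove it.

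The plan is to exploit two facts. First, vertical translation is an isometry of $\Nil$ and leaves equation (\ref{ee6}) invariant (the coefficients depend only on $\alpha=\tau y+u_x$, $\beta=-\tau x+u_y$ and on the second derivatives of $u$, all of which are unchanged by adding a constant); hence each $v_n=u_n-u_n(p)$ is again a solution of (\ref{ee6}), now normalized so that $v_n(p)=0$ and $|Gv_n(p)|=|Gu_n(p)|$. Second, the hypothesis is precisely a bound keeping the tangent plane away from the vertical at the point over $p$: since $|G(u)|^2=\alpha^2+\beta^2=W^2-1$, a uniform bound on $|Gu_n(p)|$ is a uniform bound on $W_n(p)$, equivalently the vertical part $N^v=E_3/W$ of the upward unit normal of $\Sigma_{v_n}$ at the point $q_n=(p,v_n(p))=(p,0)$ is bounded away from zero, so $N_n(q_n)$ lies in a fixed cone about $E_3$.

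First I would invoke stability. Each $\Sigma_{v_n}$ is transverse to the Killing field $\xi$, hence stable, and therefore enjoys uniform curvature estimates: there is a constant $C_1>0$ with $|A_{\Sigma_{v_n}}|\leq C_1$ on the portion of $\Sigma_{v_n}$ lying over a fixed disk $D=B_r(p)\Subset\Omega$.

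Next I would propagate the slope bound from $q_n$ to a full neighbourhood of $p$. A uniform bound on the second fundamental form makes the Gauss map of $\Sigma_{v_n}$ uniformly Lipschitz with respect to intrinsic distance, so the unit normal cannot rotate arbitrarily fast; starting from $q_n$, where $N_n$ lies in a fixed cone about $E_3$, the normal $N_n$ stays in a slightly larger fixed cone about $E_3$ on a fixed intrinsic ball about $q_n$. On that cone the surface is a graph with controlled slope, so the intrinsic ball contains the graph of $v_n$ over a possibly smaller Euclidean disk $D'=B_{r'}(p)$, yielding a uniform bound $|\nabla_0 v_n|\leq C_2$ on $D'$, equivalently $W_n\leq C_3$ there. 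Since $v_n(p)=0$, integrating this gradient bound gives $|v_n|\leq C_2\,r'$ on $D'$, so the sequence $\{v_n\}$ is uniformly bounded on $D'$. Finally I would apply the Compactness Theorem \ref{CT} to $\{v_n\}$ on $D'$: a subsequence converges uniformly on compact subsets of $D'$ to a solution $v$ of (\ref{ee6}), which proves the lemma with $D'$ as the asserted neighbourhood of $p$.

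I expect the main obstacle to be the slope-propagation step, where the pointwise bound on $W_n(p)$ and the intrinsic curvature estimate must be combined into a genuine gradient bound on a fixed Euclidean disk. This is the only place where the geometry, rather than the pointwise hypothesis, does real work: we have no a priori $L^\infty$ bound on the $v_n$ to feed into a purely analytic interior gradient estimate, and indeed the bound on $\sup_{D'}|v_n|$ emerges only after the gradient has been controlled. The delicacy is the standard chicken-and-egg of such arguments, resolved by a connectedness argument on the intrinsic ball about $q_n$ where the Gauss map is shown to remain in a fixed cone.
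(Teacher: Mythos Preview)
Your proposal is correct and shares the same core ingredients as the paper's proof: vertical translation is an isometry so each $v_n$ solves (\ref{ee6}) with $v_n(p)=0$; the graphs are stable and hence enjoy uniform curvature estimates; and the bound on $|Gu_n(p)|$ keeps the unit normal at $(p,0)$ in a fixed cone about $E_3$.

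The packaging of the final step differs slightly. The paper uses the curvature estimates in the geometric form ``$\Sigma_{v_n}$ is a graph with bounded geometry over a disk $D_n^\delta(p)$ of fixed radius in its own tangent plane $T_p\Sigma_{v_n}$''; it then passes to a subsequence along which the tangent planes converge to a non-vertical plane $\Pi(p)$, so these tangent-plane graphs converge to an $H$-graph over a disk in $\Pi(p)$, which, because $\Pi(p)$ is not vertical, is an $H$-section over a smaller ball $B(p,\bar\delta)\subset\Omega$. You instead propagate the pointwise slope bound along the surface (Lipschitz Gauss map) to obtain a uniform gradient bound for $v_n$ on a fixed Euclidean disk $D'$, integrate to get a uniform $C^0$ bound, and then invoke the Compactness Theorem~\ref{CT}. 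Both routes are valid; the paper's is a direct geometric limit, while yours reduces the endgame to the already-stated compactness result.
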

\begin{proof}
Notice that the surfaces $\Sigma_{v_n}$ are simply the translation of the surfaces $\Sigma_{u_n}$ along the Killing field $\xi$. So $N_{v_n}(q)=N_{u_n}(q)$ and $Gv_n(q)=Gu_n(q)$ for all $q\in\Omega$. 

As $\Sigma_{v_n}$ is stable, since the sections are transverse to the Killing field $\xi$, curvature estimates guarantee the existence of disks $D_n^\delta(p)\subset T_p\Sigma_{v_n}$ with small positive radius $\delta$, independent of $n$ (it depends only on the distance of $p$ to $\partial\Omega$), where each $\Sigma_{v_n}$ is a local graph (denoted by $\Sigma(v_n,\delta)$) over $D_n^\delta(p)$, having bounded geometry.

As $\vert Gv_n(p)\vert$ is bounded and taking into account 
$$N=-\dfrac{Gv_n}{W_n}+\xi,$$
there exists a subsequence of $N_{v_n}(p)$ which converges to a non-horizontal vector and thus, the tangent planes associated to this subsequence converge to a non-vertical plane $\Pi(p)$.

Notice that the sequence of graphs $\{\Sigma(v_n,p)\}$ have height and slope uniformly bounded, thence there is a subsequence which converges uniformly to a $H$-graph over the disk $D^\delta(p)\subset\Pi(p)$. Moreover, since $\Pi(p)$ is not vertical, there exists a geodesic ball $B(p,\overline{\delta})\subset\Omega$ centered at $p$ of radius $\overline{\delta}$ with $0<\overline{\delta}\leq\delta$, such that, the $H$-graph is a $H$-section. We conclude that there exists a neighbourhood of $p$ in $\Omega$ where a subsequence of the $\{v_n\}$ converges to a solution of equation (\ref{ee6}).
\end{proof}
The Lemma \ref{b1} motives the following definition.
\begin{definition} 
We say that the set 
$$\mathfrak{C}=\{p\in\Omega;\vert Gu_n(p)\vert \hspace{.3cm}\textnormal{is} \hspace{.3cm} \textnormal{bounded}\}$$
is the convergence set of the sequence $\{u_n\}$, and $\mathfrak{D}=\Omega-\mathfrak{C}$ is the divergence set of $\{u_n\}$.
\end{definition}
Notice that, from Lemma \ref{b1}, the convergence set $\mathfrak{C}$ is an open subset of $\Omega$.
\begin{lemma}\label{b2}
Let $\mathfrak{C}^\prime$ be a connected bounded component of $\mathfrak{C}$. Then for any $p\in\mathfrak{C}^\prime$, there exists a subsequence $\{v_{n^\prime}=u_{n^\prime}-u_{n^\prime}(p)\}$ which converges uniformly on compact subsets of $\mathfrak{C}^\prime$ to a solution of (\ref{ee6}) over $\mathfrak{C}^\prime$.
\end{lemma}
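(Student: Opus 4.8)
The plan is to promote the local convergence furnished by Lemma~\ref{b1} to convergence on all compact subsets of $\mathfrak{C}^\prime$ by a diagonal extraction, reconciling the different vertical normalizations by a connectedness (chaining) argument. The decisive structural facts are that $\mathfrak{C}^\prime$ is open (noted after Lemma~\ref{b1}) and connected, hence path-connected as an open subset of $\R^2$, and that the gradient is invariant under vertical translation, $Gv_n=Gu_n$, so that the boundedness of $\vert Gu_n(q)\vert$ is a property of the point $q\in\mathfrak{C}^\prime$ alone, independent of which normalization is chosen. I also use that equation (\ref{ee6}) is invariant under $u\mapsto u+c$, since $L_H$ depends on $u$ only through its first and second derivatives, via $\alpha,\beta,W$.

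First I would fix $p\in\mathfrak{C}^\prime$ and, since $\mathfrak{C}^\prime$ is a second countable open set, choose a countable cover $\{V_k\}_{k\geq1}$ of $\mathfrak{C}^\prime$ by balls $V_k\subset\mathfrak{C}^\prime$ centred at points $q_k\in\mathfrak{C}^\prime$, with $q_1=p$, each $V_k$ being a neighbourhood provided by Lemma~\ref{b1} on which a subsequence of $\{u_n-u_n(q_k)\}$ converges. Because $\vert Gu_n(q_k)\vert$ remains bounded along every subsequence, I can invoke Lemma~\ref{b1} repeatedly along a nested family of subsequences, extracting for each $k$ a further subsequence on which $u_n-u_n(q_k)$ converges uniformly on compact subsets of $V_k$ to a solution $w_k$ of (\ref{ee6}); the interior (curvature) estimates underlying Lemma~\ref{b1} and Theorem~\ref{CT} give $C^2$ convergence, so the gradients stay uniformly bounded on these sets. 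The diagonal subsequence $\{u_{n^\prime}\}$ then satisfies, simultaneously for all $k$, the uniform convergence $u_{n^\prime}-u_{n^\prime}(q_k)\to w_k$ on compact subsets of $V_k$.

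The heart of the matter is to reconcile the normalizations so that the single sequence $v_{n^\prime}=u_{n^\prime}-u_{n^\prime}(p)$ converges. On any overlap $V_k\cap V_l\neq\varnothing$ the difference $[u_{n^\prime}-u_{n^\prime}(q_k)]-[u_{n^\prime}-u_{n^\prime}(q_l)]=u_{n^\prime}(q_l)-u_{n^\prime}(q_k)$ is constant in the point variable, and since both bracketed sequences converge on the overlap, this constant converges to some $c_{kl}\in\R$. Given an arbitrary $q_k$, I would join $p=q_1$ to $q_k$ by a path in $\mathfrak{C}^\prime$, cover the compact path by finitely many of the $V_l$ with consecutive members overlapping, and chain these limits to conclude that $u_{n^\prime}(q_k)-u_{n^\prime}(p)\to a_k$ for every $k$. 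Consequently
\[
v_{n^\prime}=\big(u_{n^\prime}-u_{n^\prime}(q_k)\big)+\big(u_{n^\prime}(q_k)-u_{n^\prime}(p)\big)\longrightarrow w_k+a_k
\]
uniformly on compact subsets of each $V_k$. Since $w_k+a_k$ again solves (\ref{ee6}) by translation invariance, and since the local limits agree on overlaps by construction, they glue to a global solution $v$ of (\ref{ee6}) on $\mathfrak{C}^\prime$ with $v(p)=0$.

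Finally, uniform convergence on an arbitrary compact $K\subset\mathfrak{C}^\prime$ follows because $K$ is covered by finitely many $V_k$, on each of which the convergence is uniform on compact subsets. The step I expect to be the main obstacle is exactly this reconciliation of the vertical normalizations: one must verify that the constants $u_{n^\prime}(q_k)-u_{n^\prime}(p)$ genuinely converge, and it is precisely here that the connectedness of $\mathfrak{C}^\prime$ (through chaining along overlapping neighbourhoods) and the normalization-independence of the gradient bound $Gv_n=Gu_n$ enter in an essential way.
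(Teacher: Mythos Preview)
Your proposal is correct and follows essentially the same route as the paper: a countable set of points in $\mathfrak{C}'$, local convergence from Lemma~\ref{b1} at each, and a diagonal extraction. The paper's proof is terser and applies Lemma~\ref{b1} directly to the single sequence $v_n=u_n-u_n(p)$ at each dense point $p_m$, glossing over the point you make explicit---namely that one must know $u_{n'}(p_m)-u_{n'}(p)$ stays bounded (equivalently, that the different vertical normalizations can be reconciled); your chaining argument via connectedness of $\mathfrak{C}'$ is exactly what justifies this step.
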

\begin{proof}
Taking a countable dense set $\{p_n\}$ in $\mathfrak{C}^\prime$. According to the Lemma \ref{b1}, each point $p_m$ has a neighbourhood where a subsequence of $\{v_n\}$ converges to an $H$-section. This convergence is uniform on compact sets of the neighbourhood. By a diagonal process, we constructed a subsequence of $\{v_n\}$ which converges uniformly on compact subsets of $\mathfrak{C}^\prime$ to an $H$-section, this happens for any $p\in\mathfrak{C}^\prime$. 
\end{proof}
Now, we study the divergent set $\mathfrak{D}$, where the sequence $\{\vert Gu_n(p)\vert\}$ diverges. If $p\in\mathfrak{D}$, $\vert Gv_n(p)\vert$ is unbounded and we can consider a subsequence $\{v_{n^\prime}\}$ of $\{v_n=u_n-u_n(p)\}$ such that $\vert Gv_{n^\prime}(p)\vert\rightarrow+\infty$. Therefore $\{N_{v_{n^\prime}}(p)\}$ converges to a horizontal vector $N(p)$.

We are going consider the identification. $\iota(\Omega)\approx\Omega$
\begin{lemma}\label{b3}
Let $p\in\Omega$ and $\{u_n\}$ be a sequence of solutions of (\ref{ee6}) in the domain $\Omega$. 
\begin{itemize}
\item[(i)] If $p\in\mathfrak{C}$, there is a subsequence of $\{v_n\}$ where $v_n=u_n-u_n(p)$, converging uniformly in a neighbourhood of $p\in\Omega$.
\item[(ii)] If $p\in\mathfrak{D}$, there is a compact arc $L_p(\widetilde{\delta})$ of curvature $2H$, containing $p$ such that after passing to a subsequence, $\{N_{v_n}(p)\}$ converges to a horizontal vector whose projection via $\pi$ is orthogonal to $L_p(\widetilde{\delta})$, having the same direction as the curvature vector to the graph of $v_n$ at $\iota(p)\approx p$.
\end{itemize}
\end{lemma}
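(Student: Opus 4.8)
The plan is to treat the two items separately, since part (i) is essentially a restatement of Lemma \ref{b1} and part (ii) is where the real work lies. For part (i), I would simply invoke Lemma \ref{b1}: by definition $p\in\mathfrak{C}$ means $\vert Gu_n(p)\vert$ is uniformly bounded, and Lemma \ref{b1} then produces a subsequence of $\{v_n=u_n-u_n(p)\}$ converging uniformly on a neighbourhood of $p$ to a solution of (\ref{ee6}). So part (i) requires no new argument.

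For part (ii), the starting point is the discussion preceding the lemma: since $p\in\mathfrak{D}$, after passing to a subsequence we have $\vert Gv_{n}(p)\vert\to+\infty$, so the upward normals $N_{v_n}(p)=-\frac{Gv_n(p)}{W_n}+\frac{1}{W_n}\xi$ have vertical component $\frac{1}{W_n}\to 0$, and hence $\{N_{v_n}(p)\}$ subconverges to a horizontal unit vector $N(p)$. The first step is to exploit stability and curvature estimates (as in the proof of Lemma \ref{b1}): because the sections $\Sigma_{v_n}$ are transverse to $\xi$ they are stable, so on a disk of fixed radius $\delta>0$ in $T_p\Sigma_{v_n}$ (with $\delta$ depending only on the distance from $p$ to $\partial\Omega$) each $\Sigma_{v_n}$ is a local graph of bounded geometry. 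The second step is to pass to the limit: the tangent planes $T_p\Sigma_{v_n}$ converge to a \emph{vertical} plane $\Pi(p)$ (vertical precisely because $N(p)$ is horizontal), and the local graphs converge in the $C^k$ topology to an $H$-surface piece $\Sigma_\infty$ through $p$, tangent to $\Pi(p)$ and containing the vertical fiber direction $\xi$.

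The third step, which is the crux, is to identify the limit and read off the curve. Since $\Sigma_\infty$ is an $H$-surface containing a vertical fiber and is the limit of graphs whose normals became horizontal exactly at $p$, I would argue that $\Sigma_\infty$ agrees near $p$ with the vertical $H$-cylinder $\pi^{-1}(L)$ over a curve $L$ through $\pi(p)=p$; this is precisely the content of Theorem \ref{S1}, which forces $k(L)=2H$ and forces $\Sigma_\infty$ to converge to the vertical cylinder in every $C^k$. Concretely, the horizontal limit normal $N(p)$ projects via $\pi_*$ to a unit vector in $\R^2$, and defining $L_p(\widetilde{\delta})$ to be the compact arc of geodesic curvature $2H$ through $p$ whose tangent is orthogonal to $\pi_*(N(p))$ gives the required arc. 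The last step is to match orientations: because $X_{v_n}=-\pi_*(N^h_{v_n})\to \pi_*(N(p))$ and the normals rotate so that the mean-curvature/curvature vector of the limiting cylinder points in the interior direction dictated by $H>0$, the projection of $N(p)$ has the same direction as the curvature vector of the curve $\iota(p)\approx p$, as claimed.

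The main obstacle I anticipate is making the convergence of the local graphs to the vertical cylinder rigorous and extracting from it that the limiting profile is genuinely a compact arc $L_p(\widetilde{\delta})$ of curvature exactly $2H$ rather than merely a surface with horizontal normal at one point. This is where Theorem \ref{S1} must be applied carefully: one needs that the divergence of $\vert Gv_n\vert$ at $p$ propagates to an entire arc, so that the limit surface is a whole vertical cylinder rather than a single vertical tangent line. I expect this to follow from the bounded geometry of the converging local graphs together with the rigidity in Theorem \ref{S1} (the $C^k$-convergence to $\pi^{-1}(\gamma)$), but stating precisely on what fixed-size neighbourhood the arc $L_p(\widetilde{\delta})$ exists, and that $\widetilde\delta$ can be taken uniform, is the delicate point.
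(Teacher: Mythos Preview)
Your treatment of part (i) and the setup for part (ii) (stability, curvature estimates, local graphs $\Sigma(v_n(p),\delta)$ over disks in $T_p\Sigma_{v_n}$, convergence of tangent planes to a vertical plane $\Pi(p)$, and passage to a limit $H$-surface $\Sigma_\infty$) match the paper exactly. The divergence is at the step you yourself flag as the crux: showing that $\Sigma_\infty$ coincides with a piece of the vertical cylinder $\pi^{-1}(L_p)$.

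Your plan to invoke Theorem~\ref{S1} here does not work as stated. Theorem~\ref{S1} concerns a single $H$-section $u$ over a domain $\Omega$ which diverges to $+\infty$ along a \emph{boundary} arc $\gamma\subset\partial\Omega$; it then concludes $k(\gamma)=2H$ and $C^k$-convergence of $\Sigma_u$ to $\pi^{-1}(\gamma)$. In the present situation $p$ is an \emph{interior} point, there is no pre-existing arc $\gamma$ along which anything diverges, and $\Sigma_\infty$ is a graph over a disk in a vertical plane, not an $H$-section over a subdomain of $\R^2$. So Theorem~\ref{S1} gives you no leverage to conclude that $\Sigma_\infty$ is cylindrical; at best it is an a posteriori consistency check once you already know the arc.

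The paper fills this gap with a tangency/maximum-principle argument you are missing. One first \emph{defines} $L_p$ as the curve through $p$ of curvature $2H$ orthogonal to $\pi_*(N(p))$, with curvature vector in the direction of $\pi_*(N(p))$; then $\pi^{-1}(L_p)$ and $\Sigma_\infty$ are two $H$-surfaces tangent at $p$. If $\Sigma_\infty$ lies on one side of $\pi^{-1}(L_p)$, the maximum principle gives $\Sigma_\infty\subset\pi^{-1}(L_p)$ and you are done. If not, the intersection $\Sigma_\infty\cap\pi^{-1}(L_p)$ consists of $k\ge 2$ curves through $p$ cutting $\Sigma_\infty$ into $2k$ sectors lying on alternating sides of the cylinder; the mean curvature vector (hence the normal) of $\Sigma_\infty$ must then alternate between pointing up and pointing down. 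But $\Sigma_\infty$ is a $C^k$-limit of the graphs $\Sigma(v_n(p),\delta)$, whose upward normals never point down, so for large $n$ this would force $N_{v_n}$ to point both up and down---a contradiction. This argument is what produces the compact arc $L_p(\widetilde\delta)$ and the orthogonality/orientation claim simultaneously; you should replace the appeal to Theorem~\ref{S1} by it.
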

\begin{proof}
The first part of the lemma was proved in Lemma \ref{b2}. We denote by $\Sigma_{v_n}$ the graph of the function $v_n$ over $\Omega$. Note that $N_{v_n}(q)=N_{u_{n}}(q)$ and the convergence and divergence set are the same for $\{u_n\}$ and $\{v_n\}$. 

 Since $H$-sections are stables, they have curvature estimates, see \cite{Rosenberg1}.  \newline
The curvature estimates give us an $\delta>0$ independent of $n$ ($\delta$ only depends on the distance from $p$ to $\partial\Omega$). Such that a neighbourhood of $v_n(p)$ in $\Sigma_{v_n}$ is a graph in geodesic coordinates, with height and slope uniformly bounded over the disk $D_n^{\delta}(v_n(p))$ of radius $\delta$ centered at the origin of $T_p\Sigma_{v_n}$. We call this graph $\Sigma(v_n(p),\delta)$
 
Suppose that $p\in\mathfrak{D}$. Since $\vert Gv_n(p)\vert$ is unbounded, there is a subsequence of $N_{v_n}(p)$ that converges to a horizontal vector $N(p)$. So, for this subsequence, the tangent planes $T_p\Sigma_{v_n}$ converges to a vertical plane $\Pi(p)$ and the graphs $\Sigma(v_n(p),\delta)$ converge to a constant mean curvature $H$ graph $\Sigma(p,\delta)$ over a disk of radius $\delta^\prime\leq\delta$, centered at the origin of $\Pi(p)$. By the choice of the direction of the normal vector and the choice of $H>0$, the limit of the curvature vectors of $\Sigma(v_n(p),\delta)$ has the same direction as the normal limit.

Taking the curve $L_p\subset\Omega$ passing through $p$, orthogonal to $\pi_\ast(N(p))$ with curvature $2H$ and the curvature vector at $p$ having the same direction as $\pi_\ast(N(p))$. We want to prove $\Sigma(p,\delta^\prime)\subset\pi^{-1}(L_p)$.

Since $\Sigma(p,\delta^\prime)$ is tangent to $\pi^{-1}(L_p)$ at $v_n(p)$, if $\Sigma(p,\delta^\prime)$ is in one side of $\pi^{-1}(L_p)$, by the maximum principle, we have that $\Sigma(p,\delta^\prime)\subset\pi^{-1}(L_p)$. If this is not the case, $\Sigma(p,\delta^\prime)\cap\pi^{-1}(L_p)$ is composed by $k\geq2$ curves passing through $v_n(p)$. These curves separate $\Sigma(p,\delta^\prime)$ in $2k$ components and the adjacent components lies in alternative sides of $\pi^{-1}(L_p)$. Moreover, the curvature vector alternates from pointing down to pointing up when one goes from one component to other. This implies that the normal vector to $\Sigma(p,\delta^\prime)$ points down and up. So, for n big enough, the normal vectors to $\Sigma(v_n(p),\delta)$ would point down and up, which does not occur.

Let $L_p(\widetilde{\delta})\subset\Omega$, $\delta^\prime\geq\widetilde{\delta}$ be the curve contained in $\Sigma(p,\delta^\prime)\cap\iota(L_p)$ which contains $p\approx\iota(p)$, and has length $2\widetilde{\delta}$. Since $\Sigma(p,\delta^\prime)\subset\pi^{-1}(L_p)$, we have that, for all $q\in L_p(\widetilde{\delta})$ the normal vector to $\Sigma(p,\delta^\prime)$ at $q$ is a horizontal vector normal to $L_p(\widetilde{\delta})$ at $q$.
\end{proof}
\begin{lemma}\label{b4} 
Let $\{u_n\}$ be a sequence of solutions of (\ref{ee6}) in $\Omega$. Given $p\in\mathfrak{D}$, there is a curve $L\subset\Omega$ of curvature $2H$ which passes through $p$ such that, after passing to a subsequence, the sequence of normal vectors $\{N_{u_n}\vert_L\}$ converges to a horizontal vector, whose projection via $\pi$ is normal to $L$, and having the same direction as the curvature vector of $L$. This curve $L$ contains the compact arc $L_p(\widetilde{\delta})$ given in Lemma \ref{b3}.
\end{lemma}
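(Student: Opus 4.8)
The plan is to promote the pointwise divergence-set statement from Lemma \ref{b3}(ii) to a statement about a genuine curve $L$, by showing that the locally constructed arcs $L_p(\widetilde{\delta})$ glue together along a single curve of curvature $2H$ on which the normal convergence holds. The essential input is that through each point $p\in\mathfrak{D}$ Lemma \ref{b3} produces, after passing to a subsequence, a compact arc $L_p(\widetilde{\delta})$ of curvature $2H$ through $p$ on which $\{N_{v_n}\}$ (equivalently $\{N_{u_n}\}$, since translation along $\xi$ does not change the normal) converges to a horizontal vector projecting orthogonally to the arc. I would extend this local picture by an analytic-continuation / propagation argument along the arc.

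The key steps, in order, are as follows. First I would observe that since the normal vectors $N_{u_n}$ along $L_p(\widetilde{\delta})$ converge to horizontal vectors, every interior point $q$ of $L_p(\widetilde{\delta})$ again lies in $\mathfrak{D}$: indeed $\langle N_{u_n}(q),E_3\rangle=1/W_n(q)\to 0$ forces $\abs{Gu_n(q)}\to+\infty$. Hence Lemma \ref{b3}(ii) applies afresh at each such $q$, producing a new arc $L_q(\widetilde{\delta})$ of curvature $2H$ through $q$. Second, I would check that $L_q(\widetilde{\delta})$ must coincide with $L_p(\widetilde{\delta})$ near $q$: both are arcs of curvature $2H$ through $q$, and the limiting horizontal normal determined in Lemma \ref{b3} prescribes both the tangent direction (orthogonal to $\pi_*N(q)$) and the direction of the curvature vector at $q$; two curves of the same constant geodesic curvature $2H$ sharing a point, a tangent line, and a curvature-vector orientation are identical near that point. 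Third, using this overlap I would continue the arc in both directions by a connectedness/maximality argument: let $L$ be the maximal curve of curvature $2H$ obtained by concatenating these locally coinciding arcs, starting from $L_p(\widetilde{\delta})$. By construction $L\subset\Omega$, $L$ has curvature $2H$, contains $L_p(\widetilde{\delta})$, and along all of $L$ the normal vectors $\{N_{u_n}\}$ converge (after passing to a single subsequence) to a horizontal vector normal to $L$ with the same direction as the curvature vector of $L$, exactly as asserted.

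The main obstacle is the subsequence bookkeeping. Lemma \ref{b3} only gives convergence of $N_{v_n}$ after passing to a subsequence that depends a priori on the chosen point, so naively the arcs $L_q(\widetilde\delta)$ at different points $q$ could be extracted along incompatible subsequences. To handle this I would fix once and for all the subsequence obtained at the initial point $p$, and argue that along this fixed subsequence the convergence $N_{u_n}\to$ (horizontal normal) already holds uniformly on the whole compact arc $L_p(\widetilde{\delta})$ (this is precisely the content of the last paragraph of the proof of Lemma \ref{b3}, where $\Sigma(p,\delta')\subset\pi^{-1}(L_p)$ forces the limiting normal to be horizontal and $L_p$-normal at every $q\in L_p(\widetilde\delta)$). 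Thus no new extraction is needed to detect that neighbouring points are in $\mathfrak{D}$ with the prescribed limiting normal, and the propagation proceeds along the single fixed subsequence. Finally, to rule out the arc terminating in the interior of $\Omega$ I would invoke the same alternation-of-curvature-vector argument used in Lemma \ref{b3} together with the curvature estimates, so that $L$ either closes up or exits through $\partial\Omega$, yielding the desired curve $L$ of curvature $2H$ through $p$ containing $L_p(\widetilde{\delta})$.
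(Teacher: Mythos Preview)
Your proposal is correct and follows essentially the same approach as the paper: both arguments use Lemma~\ref{b3} at successive points together with the uniqueness of a curvature-$2H$ curve with prescribed tangent and curvature-vector direction to propagate the horizontal-normal convergence along a single curve $L$. The only organizational difference is that the paper fixes $L$ at the outset as the maximal curvature-$2H$ extension of $L_p(\widetilde\delta)$ in $\Omega$ and then runs an open--closed argument on the set $\Lambda\subset L$ where the desired convergence holds (using an explicit diagonal extraction in the closedness step), whereas you build $L$ by concatenation and attempt to work along a single fixed subsequence; your ``no new extraction is needed'' claim is slightly optimistic and in practice is most cleanly justified by the same diagonal bookkeeping the paper uses.
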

\begin{proof}
Let $L$ be the curve of constant curvature $2H$ in $\Omega$, which contains $L_p(\widetilde{\delta})$ joining the points of $\partial\Omega$ ($L_p(\widetilde{\delta})$ is given in the Lemma \ref{b3}). Given $p,q\in\Omega$, denote by $\overline{pq}$ the compact arc in $L$ between $p$ and $q$. We define
$$\Lambda=\{q\in L;T(q)\hspace{.1cm}\textnormal{is}\hspace{.1cm} \textnormal{true}\}$$
where
\begin{itemize}
\item $T(q)=$ there is a subsequence of $\{u_n\}$ such that $\{N_{u_n}\vert_{\overline{pq}}\}$ becomes horizontal, whose projection via $\pi$ becomes orthogonal to $L$, having the same direction as the curvature vector of $L$.
\end{itemize}
We want to prove that $\Lambda=L$. Since $p\in\Lambda$, $\Lambda$ is not empty. We will prove that $\Lambda$ is open and closed. First, we will prove that $\Lambda$ is open. Let $q$ be a point in $\Lambda$. denote by $u_{\Lambda(n)}$ the subsequence associated to $\Lambda$. Since $\Lambda\subset\mathfrak{D}$, Lemma \ref{b3} give us a curve $L_q(\widetilde{\delta})$ through $q$ such that, after passing to a subsequence $\{N_{u_{\Lambda(n)}}\vert_{L_q(\delta)}\}$ becomes horizontal and has the same direction as the curvature vector of $L_q(\delta)$. Note that this subsequence of $\{N_{u_{\Lambda(n)}}\vert_{L_q(\delta)}\}$ converges to a horizontal vector normal to $L_q(\delta)$ and to $L$ simultaneously, so $L_q(\delta)\subset L$, thus $\Lambda$ is open.

Now, we will prove that $\Lambda$ is closed. We take a convergent sequence $\{q_n\}$ in $\Lambda$, $q_n\rightarrow q\in L$. We will show that $q\in\Lambda$.\newline
For each $m$, there is a subsequence of $\{u_{\Lambda(n)}\}$ such that $\{N_{u_{\Lambda(n)}}\vert_{\overline{pq}_m}\}$ becomes horizontal with the same direction as the curvature vector of $L$ in $\overline{pq}_m$. By the diagonal process, we obtain a subsequence of $\{u_{\Lambda(n)}\}$ such that $\{N_{u_{\Lambda(n)}}\vert_{\overline{pq}_m}\}$ converges to a horizontal vector having the same direction as the curvature vector of $L$ in $\overline{pq}_m$ for all $m$. Then by Lemma \ref{b3}, we can find a curve $L_{q_m}(\delta)$  having constant curvature $2H$ through $q_m$ (for $m$ large, $\delta$ depends only on the distance from $q$ to $\partial\Omega$) such that $\{N_{u_{\Lambda(n)}}\vert_{\overline{pq}_m}\}$ converges to a horizontal vector having the same direction as the curvature vector to $L_{q_m}(\delta)$. So, $L_{q_m}\subset L$ and since $q_m\rightarrow q$, we have that, for all $m$ large enough, $q\in L_{q_m}(\delta)$. Consequently $q\in\Lambda$.

\end{proof}

An important conclusion of the Lemma \ref{b4} is that the divergence set is given by $\mathfrak{D}=\bigcup_{i\in I}L_i$, where $L_i$ is a curve called a divergence curve, having curvature $2H$. More precisely, we have the following definition.
\begin{definition}(Divergence line)
Let $L\subset\Omega$ be a curve having constant curvature $k(L)=2H$, $p\in L$ and $\{v_n=u_n-u_n(p)\}$. If there exists a subsequence of $\{N_{v_n\vert_L}\}$ which converges to a horizontal vector whose projection via $\pi$ is orthogonal to $L$, then we said that $L$ is a divergence line of $\{u_n\}$.
\end{definition}

\begin{lemma}\label{b5} 
Let $\{u_n\}$ be a sequence of solutions of (\ref{ee6}) in $\Omega$. Suppose that the divergence set $\mathfrak{D}$ of $\{u_n\}$ is composed of countable number of divergence lines. Then there exists a subsequence of $\{u_n\}$, again denoted by $\{u_n\}$ such that
\begin{itemize}
\item the divergence set of $\{u_n\}$ is composed of a countable number of pairwise disjoint divergence lines,
\end{itemize} 
\end{lemma}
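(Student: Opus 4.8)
The plan is to extract a single subsequence along which the limiting normal is well defined at every relevant point, so that through each divergence point there passes a \emph{unique} divergence line; pairwise disjointness will then follow from the uniqueness of a constant-curvature curve through a point with prescribed normal. First I record the geometry. Each divergence line $L_i$ is a $C^2$ arc with $k(L_i)=2H$, hence an arc of a circle of radius $1/(2H)$ in $\R^2$, and by Lemma \ref{b3} its curvature vector is prescribed along it. Two such arcs either lie on the same circle, in which case, sharing the same curvature-vector orientation, any two that overlap can be amalgamated into a single arc which we again call a divergence line, or they lie on distinct circles, in which case they meet in at most two points. Since the index set is countable, the set $\mathcal{S}$ of all pairwise intersection points of non-amalgamated lines is countable.

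Next comes the diagonal extraction. I choose a countable set $P\subset\mathfrak{D}$ which is dense in $\mathfrak{D}=\bigcup_{i}L_i$ (a countable dense subset of each $L_i$) and which contains $\mathcal{S}$. Since translation along the fiber does not change the normal, $N_{v_n}(p)=N_{u_n}(p)$ lies in the compact unit tangent sphere at $\iota(p)$, so by a Cantor diagonal argument over $P$ I pass to a subsequence, still denoted $\{u_n\}$, with $N_{u_n}(p)\to N_\infty(p)$ for every $p\in P$. Because $N=-Gu_n/W_n+E_3/W_n$, the vertical component of $N_{u_n}(p)$ is $1/W_n(p)$, so $\vert Gu_n(p)\vert\to+\infty$ exactly when $N_\infty(p)$ is horizontal. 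Thus for $p\in P$ the point $p$ lies in the divergence set $\mathfrak{D}^\ast$ of the subsequence if and only if $N_\infty(p)$ is horizontal, and in that case $\pi_*N_\infty(p)$ determines through $p$ a unique curve of curvature $2H$ whose curvature vector points along $\pi_*N_\infty(p)$.

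Finally I prove disjointness. Let $L$ and $L'$ be two divergence lines of the subsequence. By Lemma \ref{b4}, along $L$ the normals become horizontal and orthogonal to $L$ for some further subsequence; at any $p\in P\cap L$ this further limit must coincide with $N_\infty(p)$, so $N_\infty(p)$ is precisely the horizontal vector orthogonal to $L$ at $p$ with the curvature-vector orientation, and likewise for $L'$. Consequently, at any common point $q\in P$ of $L$ and $L'$ the single vector $N_\infty(q)$ would have to determine both lines, and by the uniqueness of the constant-curvature curve through $q$ with given normal and curvature-vector orientation this forces $L$ and $L'$ to lie on the same circle, contrary to $L\neq L'$ after amalgamation. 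Since $L$ and $L'$ are, near each of their points, arcs of the original $L_i$, any intersection of the two distinct arcs lies in $\mathcal{S}\subset P$; hence $L\cap L'=\varnothing$, and the divergence set of the subsequence is a countable union of pairwise disjoint divergence lines.

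I expect the main obstacle to be the last piece of bookkeeping: showing that every divergence line of the extracted subsequence is, near each of its points, an arc of one of the original $L_i$, so that $P$ is dense in it and every crossing lies in $\mathcal{S}\subset P$. This rests on the fact that a radius-$1/(2H)$ arc contained in the countable union $\bigcup_i L_i$ must, off a countable set, coincide locally with one of the $L_i$ (two such arcs sharing a sub-arc lie on the same circle), which is exactly what keeps the crossing set $\mathcal{S}$ countable and makes the uniqueness argument at $q$ applicable.
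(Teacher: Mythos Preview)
Your proof is correct and rests on the same geometric fact as the paper's: through a point with a prescribed horizontal limit normal there passes a unique curve of curvature $2H$ with the correct curvature-vector orientation, so two divergence lines sharing a point at which the limit normal is already pinned down must coincide.

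The paper, however, takes a simpler iterative route. It picks one divergence line $L_1$, passes to a subsequence along which the normals converge on all of $L_1$ (Lemma \ref{b4}), then picks a second line $L_2$ from the (possibly smaller) divergence set of that subsequence and extracts again; at any $q\in L_1\cap L_2$ the doubly-extracted normal limit is simultaneously orthogonal to $L_1$ and to $L_2$, and uniqueness forces $L_1=L_2$. Iterating over the countable list and diagonalising finishes the argument. This avoids your dense set $P$ and the crossing set $\mathcal{S}$ entirely: at each stage the relevant intersection point is handled directly, because the normal there already converges from the previous extraction.

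Your version front-loads everything into a single diagonal extraction over $P=\mathcal{S}\cup(\text{countable dense set})$. The bookkeeping you flag in your final paragraph is indeed the only delicate point, and it is resolved by an observation the paper also uses implicitly: by Lemma \ref{b4} each divergence line is a full connected component of $C\cap\Omega$ for some circle $C$ of radius $1/(2H)$. Since the divergence set of any subsequence is contained in $\bigcup_i L_i$, and a connected component of $C\cap\Omega$ that meets one of the $L_i$ lying on the same circle $C$ must equal it, every divergence line of your extracted subsequence is literally one of the original $L_i$. No amalgamation is needed, and any crossing of two distinct new lines automatically lies in $\mathcal{S}\subset P$, so your uniqueness argument at $q$ applies. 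The paper's line-by-line extraction is shorter; your approach has the minor advantage that it produces, in one stroke, a subsequence along which $N_\infty$ exists on all of $P$.
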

\begin{proof}
Suppose that $\mathfrak{D}\neq\emptyset$ and let $L_1$ be a divergence line of $\{u_n\}$, Lemma \ref{b3} guarantees that, after passing to a subsequence, $\{N_{u_n}(q)\}$ converges to a horizontal vector whose projection via $\pi$ is orthogonal to $L_1$ at $q$ for all $q\in L$. The divergence set of this sequence is contained in the divergence set of the original sequence, so the divergence set associated to this subsequence has only countable number of lines. This subsequence is still denoted by $\{u_n\}$ and its divergence set by $\mathfrak{D}$. If there is a line $L_2\neq L_1$ in $\mathfrak{D}$, we can find  a subsequence such that $\{N_{u_n}(q)\}$ converges to a horizontal vector whose projection via $\pi$ is orthogonal to $L_2$ at $q$ for each $q\in L_2$. This implies that $L_1\cap L_2=\emptyset$. In fact, if this does not occur, we take a point $q\in L_1\cap L_2$, such that the sequence $\{N_{u_n}(q)\}$ converges to a horizontal vector whose projection is orthogonal to $L_1$ and $L_2$ at $q$ having the same direction as the curvature vector of $L_1$ and $L_2$. Then the uniqueness of a curve through $q$ having constant curvature $2H$ with a given tangent vector shows that $L_1=L_2$. We continues this process to get a subsequence of $\{u_n\}$ still denoted by $\{u_n\}$, whose divergence set is composed of a countable number of pairwise disjoint divergence lines.

\end{proof}

\begin{lemma}\label{b8} 
Let $\Omega$ be a domain bounded in part by an arc  $C$ having $k(C)\geq2H$. Let $\{u_n\}$ be an increasing or decreasing sequence of solutions  of (\ref{ee6}) in $\Omega$ with each $u_n$ continuous in $\Omega\cup C$. Suppose that $\gamma$ is an interior arc of $\Omega$ of curvature $2H$ forming part of the boundary $\mathfrak{D}$. Then $\gamma$ cannot terminate at an interior point of $C$ if $\{u_n\}$ either diverges to $\pm\infty$ or remains uniformly bounded on compact subsets of $C$. 
\end{lemma}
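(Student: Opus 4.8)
The plan is to suppose, for contradiction, that $\gamma$ terminates at an interior point $p\in C$, and to pit the full-length flux carried by the divergence line $\gamma$ against the negligible flux budget of a vanishingly thin region at $p$. Assume $\{u_n\}$ is increasing; the decreasing case is analogous after replacing $+\infty$ by $-\infty$. Using the structure established in Lemmas \ref{b3}--\ref{b5}, near $p$ the line $\gamma$ (with $k(\gamma)=2H$) separates a region $\Omega^{0}$, on which $\{u_n\}$ remains bounded and converges to a solution $u$ of (\ref{ee6}), from a region $\Omega^{+}$, on which $u_n\to+\infty$. Because $p$ is interior to $C$, the two subarcs of $C$ issuing from $p$ bound $\Omega^{0}$ and $\Omega^{+}$ respectively; write them $C^{0}$ and $C^{+}$. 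Since $|Gu_n|$ blows up along $\gamma$, the limit $u$ blows up there, and as $k(\gamma)=2H$, Theorem \ref{S1} forces $u\to+\infty$ along $\gamma$ (rather than $-\infty$) as seen from $\Omega^{0}$. For small $\epsilon>0$ choose $a\in\gamma$ and $b\in C^{0}$ at arclength $\epsilon$ from $p$, let $\sigma_\epsilon=\overline{ab}$ be the chord, and let $\Delta_\epsilon\subset\Omega^{0}$ be the thin region bounded by $\gamma_\epsilon=\widehat{pa}$, $C_\epsilon=\widehat{pb}\subset C^{0}$ and $\sigma_\epsilon$. Applying Lemma \ref{a12} to $u$ on $\Delta_\epsilon$,
$$2HA(\Delta_\epsilon)=F_u(\gamma_\epsilon)+F_u(C_\epsilon)+F_u(\sigma_\epsilon),\qquad |F_u(\sigma_\epsilon)|\le|\sigma_\epsilon|,$$
with each flux taken against the outer conormal of $\Delta_\epsilon$.

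First suppose $\{u_n\}$ diverges on $C$, say $u_n\to+\infty$ on compact subsets of $C$. Then the boundary data of the limit $u$ on $\Omega^{0}$ is $+\infty$ on both $\gamma_\epsilon$ and $C_\epsilon$, so Lemma \ref{a16}(i) yields the full fluxes $F_u(\gamma_\epsilon)=|\gamma_\epsilon|$ and $F_u(C_\epsilon)=|C_\epsilon|$. Since $a$ and $b$ sit at arclength $\epsilon$ from $p$, I have $|\gamma_\epsilon|=|C_\epsilon|=\epsilon$, while the strict triangle inequality gives $|\sigma_\epsilon|=2\epsilon\sin(\theta/2)+o(\epsilon)$, where $\theta\in(0,\pi)$ is the opening angle of $\Delta_\epsilon$ at $p$; hence $|\gamma_\epsilon|+|C_\epsilon|-|\sigma_\epsilon|\ge c\,\epsilon$ for some $c>0$. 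Because $A(\Delta_\epsilon)=O(\epsilon^2)$, the flux identity forces $2H\,O(\epsilon^2)\ge c\,\epsilon$, absurd for $\epsilon$ small. For the decreasing sequence with $u_n\to-\infty$ on $C$, Lemma \ref{a16}(ii) instead gives $F_u(\gamma_\epsilon)=-|\gamma_\epsilon|$ and $F_u(C_\epsilon)=-|C_\epsilon|$, so the identity yields $2HA(\Delta_\epsilon)\le-c\,\epsilon<0$, contradicting $H>0$ and $A(\Delta_\epsilon)>0$.

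It remains to treat the case in which $\{u_n\}$ stays uniformly bounded on compact subsets of $C$, say $|u_n|\le M$ on $C^{+}$. Here I switch to the divergence region $\Omega^{+}$, which is bounded in part by $C^{+}$ with $k(C^{+})\ge 2H>-2H$ and on which $u_n\to+\infty$. Applying the barrier estimate of Lemma \ref{a15}(ii) to each $u_n$, with data bounded by $M$ on $C^{+}$, produces a fixed neighbourhood $V$ of $C^{+}$ and a constant $c=c(\Omega)$ with $u_n\le M+c$ throughout $V$ for every $n$; since $\Omega^{+}$ abuts $C^{+}$, this uniform upper bound contradicts $u_n\to+\infty$ near $C^{+}$. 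For the decreasing sequence one uses Lemma \ref{a15}(i) to extract a uniform lower bound, except when $k\equiv 2H$ on the relevant subarc, in which case Lemma \ref{b9} restores the full flux $\pm|C_\epsilon|$ and an analogous thin-region argument (now carried out in the divergence region) closes the case.

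The two points I expect to demand the most care are the orientation bookkeeping that fixes the signs of the limiting fluxes across $\gamma$ (which rests on $X_u=-\pi_\ast(N^h)$ pointing toward increasing $u$, together with Lemma \ref{b4} identifying the limiting normal with the curvature direction of $\gamma$), and the exclusion of a tangential contact $\theta\in\{0,\pi\}$ at $p$. The latter is where $k(\gamma)=2H\le k(C)$ is used decisively: a curve of curvature exactly $2H$ tangent to $C$ at $p$ cannot remain inside $\Omega$ unless $k(C)=2H$ and the two curves osculate, so that the intersection is genuinely transversal and the chord defect $|\gamma_\epsilon|+|C_\epsilon|-|\sigma_\epsilon|$ is of linear order in $\epsilon$, dominating the quadratic area and producing the contradiction.
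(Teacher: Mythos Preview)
Your overall strategy---pinch off a thin triangular region at the putative corner $p$ and play the full flux on the two curved sides against the area and the chord---is exactly the paper's. The scaling step (your $\epsilon\to 0$, the paper's $\lambda\to 0$) and the use of Lemma~\ref{a15} in the bounded-on-$C$ case are likewise the same in substance.

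There is, however, a genuine gap in how you feed the flux machinery. You apply Lemma~\ref{a16} to the \emph{limit} $u$ on $\Omega^{0}$ and assert that $u$ takes the value $+\infty$ on $C_\epsilon$ whenever $u_n\to+\infty$ on $C$. That assertion is not justified: if $k(C)>2H$ somewhere on $C_\epsilon$, Theorem~\ref{S1} actually forbids $u\to+\infty$ there, so $F_u(C_\epsilon)$ need not equal $|C_\epsilon|$ and your inequality collapses. Even when $k(C)\equiv 2H$, the only available tool that would give you $u=+\infty$ on $C_\epsilon$ is Proposition~\ref{T5}(ii), which is proved \emph{after} (and using) the present lemma, so invoking it here is circular. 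The paper avoids this entirely by working with the sequence: it applies Lemma~\ref{a12} to each $u_n$ on the triangle and then lets $n\to\infty$ using Lemma~\ref{a17}, which only needs $u_n\to+\infty$ on the boundary arc while remaining bounded on compact subsets of the interior---both of which are immediate since the triangle sits in $\mathfrak{C}$. Switching your argument from $u$ to $u_n$ (i.e.\ replacing Lemma~\ref{a16} by Lemma~\ref{a17}) repairs the gap and brings you in line with the paper.

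Two smaller points. First, the paper disposes of the possibility $u_n\to-\infty$ on $C$ at the outset via the curvature constraint (Theorem~\ref{S1} forces $k(C)=-2H$, contradicting $k(C)\ge 2H$), so your separate treatment of that case is unnecessary. Second, the paper explicitly treats the situation where \emph{two} divergence arcs $\gamma_1,\gamma_2$ meet at $p$ (producing a convex-lens region for the bounded case); your proposal tacitly assumes a single $\gamma$, and you should check that your triangle can still be placed when two arcs are present.
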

\begin{proof}
Suppose that $\gamma$ is an arc in $\partial\mathfrak{D}$ that terminates at an interior point $p$ of $C$. By considering only a small neighbourhood of $p$, we may assume that $C$ is $C^2$. By Theorem \ref{S1}, the sequence $\{u_n\}$ cannot diverge to $-\infty$ on $C$. Moreover, id the curvature of $C$ is not identically $2H$ and $\{u_n\}$ remains uniformly bounded on compact subsets of $C$, Lemma \ref{a15} insures that a neighbourhood of $C$ is contained in $\mathfrak{C}$ a contradiction. Hence assume $k(C)\equiv2H$. Suppose $\{u_n\}$ diverges to $+\infty$ on $C$ and there exists exactly one such $\gamma$ terminates at $p$. Let $q$ be a point of $\gamma$ close to $p$ and choose $s$ on $C$ close to $p$ so that the geodesic segment $\overline{sq}$ lies in $\mathfrak{C}$. Let $\mathfrak{T}$ be the triangle formed by $\overline{rq}$ and the constant curvature $2H$ arcs $\overline{qp}$ and $\overline{pr}$. Then by Lemma \ref{a12}
\begin{equation}\label{f1}
2HA(\mathfrak{T})=F_{u_n}(\overline{pq})+F_{u_n}(\overline{pr}) +F_{u_n}(\overline{rq})
\end{equation}
while by Lemma \ref{a17}
\begin{equation}\label{f2}
\lim_{n\longrightarrow+\infty}F_{u_n}(\overline{qp})=\vert\overline{qp}\vert, \hspace{.2cm} \lim_{n\longrightarrow+\infty}F_{u_n}(\overline{pr}) =\vert\overline{pr}\vert.
\end{equation}
From (\ref{f1}), (\ref{f2}) and Lemma \ref{l2} we conclude

\begin{equation}\label{f3}
\dfrac{2HA(\mathfrak{T})}{\vert \overline{rq}\vert}\geq\dfrac{\vert \overline{qp}\vert+\vert\overline{pr}\vert}{\vert\overline{rq}\vert} -1
\end{equation}
Keeping $p$ fixed, we move $q$ to $q^\prime$ and $r$ to $r^\prime$ along the same arcs so that $\vert\overline{q^\prime p}\vert=\lambda\vert\overline{qp}\vert$ and $\vert\overline{pr^\prime}\vert=\lambda\vert\overline{pr}\vert$ and form the triangle $\mathfrak{T}\prime$ by joining $q^\prime$ to $r^\prime$ by a geodesic. Then the left hand side of (\ref{f3}) tends to zero as $\lambda\rightarrow0$ while the right hand side of (\ref{f3}) remains uniformly positive, a contradiction. The only other possibility is that two arcs $\gamma_1$ and $\gamma_2$ terminates at $p$. Then again we can find a triangle $\mathfrak{T}\subset\mathfrak{C}$ whose edges are two constant curvature $2H$ arcs and a geodesic segment as before (perhaps $\partial\mathfrak{T}\cap C=\{p\}$). The same arguments gives a contradiction.

In the case the sequence remains uniformly bounded on compact subsets of $C$ and there is exactly on $\gamma$, we choose $r$ on $C$ so that $\mathfrak{T}$ is contained in $\mathfrak{D}$. By Lemma \ref{a15} (ii) the sequence must be divergent to $-\infty$ on $\mathfrak{D}$. We now reach a contradiction as above by using Lemma \ref{a17}. If there are two arcs terminates at $p$, then $\mathfrak{D}$ is necessarily the convex lens domain formed by $\gamma_1$ and $\gamma_2$ (that is $\gamma_2=\gamma_1^\ast$, see Remark \ref{r2}). Choose the point $q$ on $\gamma_1$ and $r$ on $\gamma_2$ and form $\mathfrak{T}$ in $\mathfrak{D}$. Then (\ref{f1}), (\ref{f2}) and (\ref{f3}) still hold and we reach a contradiction as before.
\end{proof}

We ended this section with the following theorem.
\begin{theorem}\label{MCT}(Monotone convergence theorem)
Let $\{u_n\}$ be a monotonically increasing or decreasing sequence of solutions of (\ref{ee6}) in a fixed domain $\Omega$. If the sequence is bounded in a single point of $\Omega$, there is a non-empty open set $\mathfrak{C}\subset\Omega$ such that $\{u_n\}$ converges to a solution in $\mathfrak{C}$. The convergence is uniform on compact subsets of $\mathfrak{C}$ and the divergence is uniform on compact subsets of $\mathfrak{D}=\Omega-\mathfrak{C}$. If $\mathfrak{D}$ is non-empty, $\partial\mathfrak{D}$ consists of arcs of curvature $\pm2H$ and parts of $\partial\Omega$. These arcs are convex to $\mathfrak{C}$ for increasing sequences and concave to $\mathfrak{C}$ for decreasing sequences.
\end{theorem}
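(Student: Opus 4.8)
I would treat the increasing case, the decreasing one being symmetric with $+\infty$ replaced by $-\infty$ and ``convex'' by ``concave.'' Since $\{u_n\}$ is monotone, the limit $u=\lim_n u_n$ exists pointwise in $(-\infty,+\infty]$, and the hypothesis gives $u(p_0)<+\infty$ at the distinguished point $p_0$ where the sequence is bounded. The natural candidate for the convergence set is $\mathfrak{C}=\{p\in\Omega:|Gu_n(p)|\text{ is bounded}\}$, which is open by Lemma \ref{b1}, and whose complement $\mathfrak{D}$ is, by Lemmas \ref{b4} and \ref{b5}, organized along pairwise disjoint divergence lines of curvature $2H$. The proof then splits into three tasks: (1) prove $\mathfrak{C}\neq\emptyset$ and that $\{u_n\}$ converges there; (2) prove the divergence on $\mathfrak{D}$ is uniform on compacts; (3) describe $\partial\mathfrak{D}$ and its convexity relative to $\mathfrak{C}$.

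For (1), the first observation is that $u(p_0)<+\infty$ forces $\mathfrak{C}$ to be non-empty. If $|Gu_n(p_0)|$ is bounded then $p_0\in\mathfrak{C}$ directly; otherwise $p_0\in\mathfrak{D}$ and Lemma \ref{b4} gives a divergence line $L$ through $p_0$, across which the graphs $\Sigma_{u_n}$ collapse onto the vertical cylinder $\pi^{-1}(L)$. Because the sequence is increasing it cannot tend to $-\infty$ anywhere, so on the side of $L$ where the values do not blow up they must stay finite; this side lies in $\mathfrak{C}$, which is therefore non-empty in all cases. On $\mathfrak{C}$ the curvature estimates for the stable $H$-sections (Theorem \ref{S1}) bound $|Gu_n|$ uniformly on compact subsets, so $\{u_n\}$ is equi-Lipschitz there; being monotone and finite at one point of each component reached from $p_0$, it is then uniformly bounded, and Theorem \ref{CT} upgrades this to uniform convergence on compact subsets of $\mathfrak{C}$ towards a solution of (\ref{ee6}).

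The step I expect to be the genuine obstacle is the precise identification of $\mathfrak{C}$ with the set on which $u$ is finite, together with the uniform divergence in (2): one must exclude the possibility that the values escape to $+\infty$ across a component of $\mathfrak{C}$ while the gradients stay bounded, and conversely show that the gradient blow-up set is exactly the boundary of the $+\infty$ region. This is where the flux machinery enters. On a boundary arc $\gamma\subset\partial\mathfrak{D}$ of curvature $2H$, Lemmas \ref{a16}, \ref{a17} and \ref{b9} pin down $\lim_n F_{u_n}(\gamma)=\pm|\gamma|$ according to the sign of the divergence, while Lemma \ref{l2} bounds the flux strictly on arcs where the sequence stays finite; comparing these on small curvature-$2H$ triangles, exactly as in Lemma \ref{b8}, forces the transition between the finite and the $+\infty$ regimes to occur along curvature-$2H$ curves and makes the divergence to $+\infty$ uniform on compact subsets of $\mathfrak{D}$.

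Finally, for (3), $\partial\mathfrak{D}$ is composed of such divergence lines—arcs of curvature $\pm2H$, the sign recording the side from which the curvature is measured (Lemma \ref{b4})—together with portions of $\partial\Omega$, and Lemma \ref{b8} forbids a divergence line from ending at an interior point of a boundary arc $C$ with $k(C)\geq2H$, so the lines can only reach $\partial\Omega$ at the vertices of the families $\{A_i\},\{B_i\},\{C_i\}$. The convexity is read off from Theorem \ref{S1}: across each bounding arc the increasing sequence diverges to $+\infty$ on the $\mathfrak{D}$ side, so the mean curvature vector of the limiting cylinder points upward, which is precisely the condition for the arc to be convex with respect to $\mathfrak{C}$; for a decreasing sequence the divergence is to $-\infty$ and the same argument yields concavity, completing the proof.
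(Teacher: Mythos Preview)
The paper states Theorem~\ref{MCT} without proof: it is the final statement of Section~3 and is followed immediately by Section~4 with no argument supplied. Presumably the author regards it as a routine consequence of the preceding divergence-line machinery (Lemmas~\ref{b1}--\ref{b5}) together with Theorem~\ref{S1}, in the classical Jenkins--Serrin/Spruck pattern. There is therefore no proof in the paper to compare yours against.

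Your outline invokes the right ingredients, but the choice of $\mathfrak{C}$ as the gradient-bounded set---the general, non-monotone definition---creates the very obstacle you flag. In the monotone setting the standard and cleaner route is to take $\mathfrak{C}=\{p\in\Omega:\lim_n u_n(p)<+\infty\}$ from the outset. This set contains $p_0$ by hypothesis, so non-emptiness is immediate; it is open because local boundedness of a monotone sequence propagates via interior estimates and Theorem~\ref{CT}; and Theorem~\ref{S1} applied at an interior arc of $\partial\mathfrak{C}$ then gives $k=2H$ and the convexity directly. With your definition you must instead \emph{prove} that the gradient-bounded set coincides with the finite-value set, and your argument for $\mathfrak{C}\neq\emptyset$ when $p_0\in\mathfrak{D}$ (``one side of $L$ stays finite and that side lies in $\mathfrak{C}$'') is not justified---it presupposes exactly the identification you have postponed. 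Note also that your appeal to Lemma~\ref{b5} assumes countably many divergence lines, which you have not verified.

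Your treatment of part~(3)---reading the convexity/concavity from Theorem~\ref{S1} and using Lemma~\ref{b8} to control where divergence lines can terminate---is correct and is what the paper's lemmas are designed to deliver.
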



\section{The main theorems}

In this section we deal with the solution for the Jenkins-Serrin problem, see Section \ref{JS problem} for notations and definitions. More precisely, we will prove the following two theorems.

\begin{theorem}\label{T1}(Main Theorem 1)
Consider the Jenkins-Serrin problem in an admissible domain $\Omega$ and suppose the family $\{C_i\}$ is empty. Then, there exists a solution for the Jenkins-Serrin problem if and only if
\begin{equation}\label{e1}
\alpha(\partial\Omega)=\beta(\partial\Omega)+2HA(\Omega)
\end{equation}
and for all admissible polygons $\mathfrak{P}$
\begin{equation}\label{e2}
2\alpha(\mathfrak{P})<l(\mathfrak{P})+2HA(\mathfrak{P}) \hspace{.3cm} \textnormal{and} \hspace{.3cm} 2\beta(\mathfrak{P})<l(\mathfrak{P})-2HA(\mathfrak{P})
\end{equation}
where $\alpha(\mathfrak{P})$ and $\beta(\mathfrak{P})$ are the total length of the arcs in $\partial\mathfrak{P}$ belonging to $\{A_i\}$ and $\{B_i\}$ respectively, finally $l(\mathfrak{P})$ denotes the perimeter of $\mathfrak{P}$ and $A(\Omega)$ denotes the area of $\mathfrak{P}$.
\end{theorem}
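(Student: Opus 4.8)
The plan is to prove the two implications separately: the forward (necessity) direction rests entirely on the flux formulas of Section~3, while the reverse (existence) direction is a monotone approximation argument governed by the Monotone Convergence Theorem~\ref{MCT} whose outcome is forced by the hypotheses. I would first dispose of necessity. Suppose $u$ solves the Jenkins-Serrin problem; since $\{C_i\}$ is empty, $\partial\Omega=\bigcup_iA_i\cup\bigcup_iB_i$ with $u\to+\infty$ on each $A_i$ and $u\to-\infty$ on each $B_i$. The global flux identity of Lemma~\ref{a12} over $\Omega$, with each boundary term evaluated by Lemma~\ref{a16} (so that $\int_{A_i}\langle X_u,\nu\rangle\,ds=|A_i|$ and $\int_{B_i}\langle X_u,\nu\rangle\,ds=-|B_i|$), gives at once $2HA(\Omega)=\alpha(\partial\Omega)-\beta(\partial\Omega)$, the balancing equality. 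For a proper admissible polygon $\mathfrak{P}$ I would split $\partial\mathfrak{P}$ into its arcs in $\{A_i\}$, its arcs in $\{B_i\}$, and its interior arcs $\gamma$ of curvature $\pm2H$, of total length $l(\mathfrak{P})-\alpha(\mathfrak{P})-\beta(\mathfrak{P})$, so that the flux identity over $\mathfrak{P}$ reads
\[
2HA(\mathfrak{P})=\alpha(\mathfrak{P})-\beta(\mathfrak{P})+\int_{\gamma}\langle X_u,\nu\rangle\,ds .
\]
Each interior arc has constant curvature $\pm2H$, hence from the side on which its curvature is $+2H$ Lemma~\ref{l2} applies and yields $\left|\int_{\gamma}\langle X_u,\nu\rangle\,ds\right|<l(\mathfrak{P})-\alpha(\mathfrak{P})-\beta(\mathfrak{P})$; inserting the two signs of this bound into the displayed identity gives precisely $2\alpha(\mathfrak{P})<l(\mathfrak{P})+2HA(\mathfrak{P})$ and $2\beta(\mathfrak{P})<l(\mathfrak{P})-2HA(\mathfrak{P})$.

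For the existence direction I would construct the solution as a monotone limit of bounded solutions supplied by the Dirichlet theorem~\ref{DP1}. The one delicate point at the outset is that the concave arcs $B_i$ (curvature $-2H$) lie outside the direct scope of~\ref{DP1}, so I would obtain the approximating family on domains approximating $\Omega$ (or by a nested two-step scheme: first let the data on the $A_i$ increase to $+\infty$, producing by the maximum principle~\ref{MP} an increasing family, then let the data on the $B_i$ decrease to $-\infty$, producing a nested monotone family). Boundedness of the resulting sequence $\{u_n\}$ at a single interior point, secured by comparison barriers, then lets me invoke the Monotone Convergence Theorem~\ref{MCT}: $\{u_n\}$ converges on an open set $\mathfrak{C}$ to a solution of~(\ref{ee6}), diverges uniformly on $\mathfrak{D}=\Omega-\mathfrak{C}$, and $\partial\mathfrak{D}$ consists of arcs of curvature $\pm2H$ together with parts of $\partial\Omega$.

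The decisive step is to show $\mathfrak{D}=\emptyset$. I would argue by contradiction. A nonempty component of $\mathfrak{D}$ is bounded by interior $\pm2H$ arcs which, by Lemma~\ref{b8}, cannot terminate at interior points of the boundary arcs; hence its boundary closes up into a curve built of constant curvature $\pm2H$ arcs with vertices among the endpoints of $\{A_i\}$ and $\{B_i\}$, that is, an admissible polygon $\mathfrak{P}$. Evaluating $2HA(\mathfrak{P})$ through the flux identity~\ref{a12} and computing each boundary term with Lemmas~\ref{a16}, \ref{a17} and~\ref{b9} --- the interior divergence arcs now contributing $\pm$ their full length, since the unit normals of $\Sigma_{u_n}$ become horizontal there --- forces one of the two polygon inequalities to hold with \emph{equality}, contradicting the strict hypotheses. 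Therefore $\mathfrak{C}=\Omega$ and the limit $u$ solves~(\ref{ee6}) throughout $\Omega$. Finally, the balancing equality ensures that no boundary flux is lost in the limit, so by Lemma~\ref{a17} the function $u$ attains $+\infty$ on every $A_i$ and $-\infty$ on every $B_i$, and uniqueness follows from the maximum principle~\ref{MP}.

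I expect the main obstacle to be exactly this exclusion of the divergence set: making rigorous that a component of $\mathfrak{D}$ produces a genuine admissible polygon (here Lemma~\ref{b8} and the structure statement in Theorem~\ref{MCT} do the work) and that the flux accounting along its boundary saturates one of the strict inequalities. A secondary, more technical, difficulty is the construction of the monotone approximating family, because the presence of the concave arcs $B_i$ prevents a direct appeal to the Dirichlet theorem~\ref{DP1} and requires either domain approximation or the nested monotone scheme indicated above.
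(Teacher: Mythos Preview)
Your necessity argument is essentially the paper's, and your identification of the difficulty posed by the concave arcs $B_i$ is correct. However, your existence argument contains a genuine gap which is not merely technical: when $\{C_i\}=\emptyset$ there is no boundary arc on which the data are fixed and finite, so the assertion that ``boundedness of the resulting sequence $\{u_n\}$ at a single interior point'' can be ``secured by comparison barriers'' is unsubstantiated. Any monotone scheme you set up (increasing to $+\infty$ on the $A_i$, decreasing to $-\infty$ on the $B_i$, in either order) has no anchor: adding a constant to each $u_n$ produces another solution with the same boundary behaviour, so nothing in your outline prevents the whole family from drifting to $+\infty$ or to $-\infty$ at every interior point. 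The Monotone Convergence Theorem then yields nothing. For the same reason your final sentence is incorrect: when $\{C_i\}=\emptyset$ the solution is \emph{not} unique (vertical translations give a one-parameter family), so uniqueness cannot ``follow from the maximum principle''.

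The paper supplies precisely the missing idea, which is the classical Jenkins--Serrin normalisation. One works in the auxiliary domain $\Omega^\ast$ (each $B_i$ replaced by its reflected arc $B_i^\ast$, so that the Dirichlet Theorem~\ref{DP1} applies), solves for $v_n$ with data $n$ on $\cup A_i$ and $0$ on $\cup B_i^\ast$, and then \emph{chooses} a shift $\mu(n)$ by a level-set argument: $\mu(n)$ is the infimum of $c$ for which the super-level components $E_c^i=\{v_n-v_0>c\}$ attached to the $A_i$ are pairwise disjoint. The normalised sequence $u_n=v_n-\mu(n)$ is then trapped between explicit barriers $u^-\!-M$ and $u^+\!+M$ built from Theorems~\ref{T6} and~\ref{T7}, and compactness gives a limit $u$ directly, without ever invoking the divergence-set dichotomy you propose. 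The balancing condition~\eqref{e1} enters not to ``ensure no boundary flux is lost'' but to force, via a flux contradiction, that both $\mu(n)\to+\infty$ and $n-\mu(n)\to+\infty$, which is what makes $u$ take the values $\pm\infty$ on the correct arcs. Your flux-on-$\mathfrak{D}$ argument, while natural for the case $\{C_i\}\neq\emptyset$ treated in Theorem~\ref{T2}, does not by itself produce a convergent sequence here.
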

Following the notations of Theorem \ref{T1}, we have the next theorem.
\begin{theorem}\label{T2}(Main Theorem 2)
Consider the Jenkins-Serrin problem in an admissible domain $\Omega$ and suppose the family $\{C_i\}$ is non-empty. Then, there exists a Jenkins-Serrin solution of the Jenkins-Serrin problem if and only if
\begin{equation}\label{e3}
2\alpha(\mathfrak{P})<l(\mathfrak{P})+2HA(\mathfrak{P}) \hspace{.3cm} \textnormal{and} \hspace{.3cm} 2\beta(\mathfrak{P})<l(\mathfrak{P})-2HA(\mathfrak{P})
\end{equation}
for all admissible polygon $\mathfrak{P}$.
\end{theorem}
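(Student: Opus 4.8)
The plan is to prove the two implications separately: necessity by a single flux computation on an arbitrary admissible polygon, and sufficiency by constructing the solution as an iterated monotone limit of bounded Dirichlet solutions, with the inequalities (\ref{e3}) entering precisely to prevent the limit from developing an interior divergence set.

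For necessity, suppose $u$ is a Jenkins-Serrin solution and let $\mathfrak{P}$ be any admissible polygon. I would apply Lemma \ref{a12} to get $2HA(\mathfrak{P})=\int_{\partial\mathfrak{P}}\langle X_u,\nu\rangle\,ds$ and split $\partial\mathfrak{P}$ into the arcs lying in $\{A_i\}$, those lying in $\{B_i\}$, and the remaining free arcs $R$ of constant curvature $\pm 2H$ interior to $\overline{\Omega}$ (or lying on some $C_i$), whose total length is $l(\mathfrak{P})-\alpha(\mathfrak{P})-\beta(\mathfrak{P})$. Lemma \ref{a16} evaluates the flux as exactly $+|A_i|$ and $-|B_i|$ on the first two families, while Lemma \ref{l2} gives the strict bound $|F_u(R)|<l(\mathfrak{P})-\alpha(\mathfrak{P})-\beta(\mathfrak{P})$ on the free arcs, which are nonempty since $\{C_i\}\neq\varnothing$ forces any admissible polygon to be closed up by a non-$A$, non-$B$ arc. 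Writing $2HA(\mathfrak{P})=\alpha(\mathfrak{P})-\beta(\mathfrak{P})+F_u(R)$ and inserting the lower and upper bounds for $F_u(R)$ yields the two inequalities in (\ref{e3}).

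For sufficiency I would build the solution in two monotone stages. Fixing $m$, Theorem \ref{DP1} produces for each $n$ a solution $u_n^{(m)}$ of (\ref{ee6}) with boundary data $n$ on each $A_i$, $-m$ on each $B_i$, and the prescribed (bounded, suitably truncated near the endpoints) data on each $C_i$; by the maximum principle Theorem \ref{MP} the family $\{u_n^{(m)}\}_n$ is increasing in $n$, and Lemma \ref{a15}(ii) bounds it from above near $C_i$, so it is bounded at a point. The Monotone Convergence Theorem \ref{MCT} then yields a limit $U^{(m)}$ on the convergence set $\mathfrak{C}$, whose divergence set $\mathfrak{D}$ has boundary consisting of $A_i$-arcs and interior curvature-$2H$ arcs convex to $\mathfrak{C}$ (the fixed data $-m$ and the $C_i$-data keep $B_i$ and $C_i$ inside the bounded region). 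Passing $m\to+\infty$, the maximum principle makes $\{U^{(m)}\}_m$ decreasing, Lemma \ref{a15}(i) bounds it from below near $C_i$, and a second application of Theorem \ref{MCT} produces the candidate $U$, whose divergence set is now bounded by $B_i$-arcs and interior curvature-$2H$ arcs concave to $\mathfrak{C}$. The crux, and the step I expect to be the main obstacle, is to show that $\mathfrak{D}$ is empty at each stage. Assuming $\mathfrak{D}\neq\varnothing$, I would take a connected component and regard its closure as an admissible polygon $\mathfrak{P}$: by Lemma \ref{b8} no interior divergence arc terminates at an interior point of a $C_i$, so $\partial\mathfrak{P}$ is genuinely composed of boundary arcs and interior arcs meeting at vertices of the families. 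Taking the limit of $2HA(\mathfrak{P})=\int_{\partial\mathfrak{P}}\langle X_{u_n},\nu\rangle\,ds$ and using Lemma \ref{a17} (flux $\to +|A_i|$ on the $A_i$-arcs) together with the divergence-line description (Lemma \ref{b4} and Lemma \ref{b9}), along each interior arc the vectors $X_{u_n}$ converge to the conormal pointing into the $+\infty$ region, i.e. opposite to the outer conormal $\nu$ of $\mathfrak{D}$, so the flux there tends to the negative of the arc-length. Since $\beta(\mathfrak{P})=0$ in this stage, this gives the equality $2\alpha(\mathfrak{P})=l(\mathfrak{P})+2HA(\mathfrak{P})$, contradicting the strict first inequality in (\ref{e3}); the decreasing stage is symmetric and contradicts the second inequality, yielding $2\beta(\mathfrak{P})=l(\mathfrak{P})-2HA(\mathfrak{P})$. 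Hence $\mathfrak{D}=\varnothing$, the convergence is global, and by the Compactness Theorem \ref{CT} the limits are genuine solutions of (\ref{ee6}).

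It remains to verify that $U$ attains the prescribed boundary values. Continuity up to each $C_i$ follows from the solvability and barriers of Theorem \ref{DP1} (using $k(C_i)\geq 2H$) together with a diagonal argument removing the truncation of the data near the endpoints. That $U\to+\infty$ on each $A_i$ and $U\to-\infty$ on each $B_i$ follows from the monotonicity of the construction and Theorem \ref{S1}: near an $A_i$ the solution dominates every finite approximant $u_n^{(m)}$ while Lemma \ref{a15}(i) supplies the matching lower bound, and symmetrically on the $B_i$. This produces the desired Jenkins-Serrin solution and completes the proof of Theorem \ref{T2}.
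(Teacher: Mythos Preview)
Your necessity argument is essentially the paper's own (carried out just before the proofs of the main theorems), so that direction is fine.

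The sufficiency argument, however, has a genuine gap at its very first step. You invoke Theorem~\ref{DP1} to produce the approximants $u_n^{(m)}$ on $\Omega$ with finite data $-m$ on each $B_i$, but Theorem~\ref{DP1} requires the boundary to satisfy $k\geq 2H$ (equivalently $H_{\mathrm{cyl}}>0$) everywhere, and on the arcs $B_i$ one has $k(B_i)=-2H$. The bounded Dirichlet problem on the non-convex domain $\Omega$ is therefore not covered by the existence theory available here, and your double monotone limit never gets off the ground. The same difficulty recurs when you appeal to Lemma~\ref{a15} near $B_i$: part~(i) needs $k\geq 2H$ and part~(ii) needs $k>-2H$, neither of which holds on $B_i$.

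The paper avoids precisely this obstruction, and its route is structurally different from yours. It first passes to the auxiliary domain $\Omega^{\ast}$ in which each concave arc $B_i$ is replaced by its reflected arc $B_i^{\ast}$ of curvature $+2H$, so that Theorem~\ref{DP1} becomes applicable. It then does \emph{not} run a two-stage monotone scheme inside the proof of Theorem~\ref{T2}; instead, it first establishes the one-sided results Theorems~\ref{T6} and~\ref{T7} and uses them as black boxes to produce global barriers $u^{+}$ (with $+\infty$ on $\cup A_i$, $0$ on $\cup B_i^{\ast}$, $\max(f,0)$ on $\cup C_i$) and $u^{-}$ (with $-\infty$ on $\cup B_i$, $0$ on $\cup A_i$, $\min(f,0)$ on $\cup C_i$). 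A single sequence $u_n$ with symmetric data $\pm n$ and truncated $f_n$ is then sandwiched, $u^{-}\leq u_n\leq u^{+}$ on $\Omega$, so uniform boundedness on compacta is immediate and the Compactness Theorem~\ref{CT} extracts a convergent subsequence; the boundary values are read off from Proposition~\ref{T5}. All of the flux/divergence-set work that you attempt to do twice inside the proof of Theorem~\ref{T2} is, in the paper, absorbed into the prior proofs of Theorems~\ref{T6} and~\ref{T7}, where the domains in question \emph{do} satisfy $k\geq 2H$ on every boundary arc.
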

Before we prove the theorems, we will use the flux formulas to see that each Jenkins-Serrin solution $u$ satisfies the equations (\ref{e1}),(\ref{e2}) and (\ref{e3}) for an admissible polygon $\mathfrak{P}$. \newline
Notice that an admissible polygon $\mathfrak{P}$ can be write in the form
$$\mathfrak{P}=\{\displaystyle\cup_i A_i^\mathfrak{P}\}\cup\{\cup_j B_i^\mathfrak{P}\}\cup\{\cup_k \eta_k^\mathfrak{P}\} $$
where $A_i^\mathfrak{P}$, $B_j^\mathfrak{P}$ are arcs in $\mathfrak{P}\cap\partial\Omega$ and $\{\eta_k^\mathfrak{P}\}$ is composed by $2H$-curves in $\Omega$ and possible arcs $C_k$ in $\partial\Omega$.

For an admissible polygon $\mathfrak{P}$, the flux formulas yields
$$F_u(\partial\mathfrak{P})=2HA(\mathfrak{P}).$$
Where, we conclude that
\begin{eqnarray*}
F_u(\{\cup_i A_i^\mathfrak{P}\}) & = & -F_u(\mathfrak{P}-\{\cup_i A_i^\mathfrak{P}\}) +2HA(\mathfrak{P}) \\
F_u(\{\cup_j B_i^\mathfrak{P}\}) & = & -F_u(\mathfrak{P}-\{\cup_i B_i^\mathfrak{P}\}) +2HA(\mathfrak{P})
\end{eqnarray*}
Taking the firs equality for instance, we have
\begin{eqnarray*}
\alpha(\mathfrak{P})=\Sigma_i\vert A_i^\mathfrak{P}\vert & = & F_u(\{\cup_i A_i^\mathfrak{P}\}) \\
   & = & -F_u(\mathfrak{P}-\{\cup_i A_i^\mathfrak{P}\}) +2HA(\mathfrak{P}) \\
   & \leq & \vert F_u(\mathfrak{P}-\{\cup_i A_i^\mathfrak{P}\})\vert +2HA(\mathfrak{P}) \\
   & = & \vert F_u(\{\cup_j B_j^\mathfrak{P}\}\cup\{\eta_k^\mathfrak{P}\})\vert +2HA(\mathfrak{P}) \\
    & \leq & \vert F_u(\{\cup_j B_j^\mathfrak{P}\})\vert+ \vert F_u(\{\eta_k^\mathfrak{P}\})\vert +2HA(\mathfrak{P}) \\
    & = & \beta(\mathfrak{P})+ \vert F_u(\{\eta_k^\mathfrak{P}\})\vert +2HA(\mathfrak{P}) \\
    & = & l(\mathfrak{P})-\alpha(\mathfrak{P}) -\vert \{\eta_k^\mathfrak{P}\}\vert+ \vert F_u(\{\eta_k^\mathfrak{P}\})\vert +2HA(\mathfrak{P}) \\
    & < & l(\mathfrak{P})-\alpha(\mathfrak{P}) +2HA(\mathfrak{P})
\end{eqnarray*}
where we used that $F_u(B_j)=-\vert B_j\vert$ and $F_u(\eta_k)<\vert \eta_k\vert$. Thus 
$$2\alpha(\mathfrak{P})<l(\mathfrak{P})+2HA(\mathfrak{P}).$$
Analogously, we can see
$$2\beta(\mathfrak{P})<l(\mathfrak{P})-2HA(\mathfrak{P}).$$

On the other hand, when $\{\cup_kC_k\}=\emptyset$ and $\partial\mathfrak{P}=\partial\Omega$, from flux formulas we have
$$F_u(\{\cup_iA_i^\mathfrak{P}\}\cup\{\cup_jB_i^\mathfrak{P}\})=2HA(\Omega)$$
that is
$$F_u(\{\cup_iA_i^\mathfrak{P}\})+F_u(\{\cup_jB_i^\mathfrak{P}\})=2HA(\Omega)$$
which implies
$$\alpha(\mathfrak{P})-\beta(\mathfrak{P})=2HA(\Omega).$$

\begin{remark}\label{r2}(Lens domain)
We call a domain $D$, a lens domain, if $D$ is bounded by an arc $\gamma$ of curvature $2H$ and its geodesic reflection $\gamma^*$. 
\end{remark}

In order to prove these two last theorems, we are going prove some results. 
\begin{proposition}\label{T3}
Consider the Jenkins-Serrin problem in an admissible domain $\Omega$ and suppose the family $\{B_j\}$ is empty, $k(C_k)>2H$ and the assigned data $f$ on the arcs $C_k$ are bounded below. Then, there exists a Jenkins-Serrin solution for this Jenkins-Serrin problem if and only if
\begin{equation}\label{e4}
2\alpha(\mathfrak{P})<l(\mathfrak{P})+2HA(\mathfrak{P}) 
\end{equation}
for all admissible polygon $\mathfrak{P}$.
\end{proposition}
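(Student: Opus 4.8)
The equivalence splits into two directions, and the necessity of (\ref{e4}) is essentially already in hand: the flux computation carried out just before Remark \ref{r2} shows that for \emph{any} Jenkins--Serrin solution $u$ and any admissible polygon $\mathfrak{P}$, combining $F_u(\partial\mathfrak{P})=2HA(\mathfrak{P})$ (Lemma \ref{a12}) with $F_u(A_i)=|A_i|$ (Lemma \ref{a16}) and the strict estimate of Lemma \ref{l2} on the interior $2H$-arcs and the $C_k$ yields $2\alpha(\mathfrak{P})<l(\mathfrak{P})+2HA(\mathfrak{P})$. Since here $\{B_j\}$ is empty, this is exactly (\ref{e4}), so the plan is to concentrate on the sufficiency direction.

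For sufficiency I would produce the solution as the limit of a monotone family of bounded solutions. For each $n\in\N$, Theorem \ref{DP1} (the hypotheses $k(A_i)=2H$ and $k(C_k)>2H$ give $k\geq 2H$ on $\partial\Omega$, after the usual smoothing of the finitely many corners) provides the solution $u_n$ of the Dirichlet problem with data $n$ on each $A_i$ and $f$ on each $C_k$. By the maximum principle (Theorem \ref{MP}) the sequence $\{u_n\}$ is monotone increasing, and it is bounded below since it increases from $u_1$ (here the hypothesis that $f$ is bounded below keeps the lower data finite). Because $k(C_k)>2H$ and $f$ is bounded above on compact subarcs of each $C_k$, Lemma \ref{a15}(ii) furnishes a neighbourhood of each such subarc on which $u_n\leq M+c$ uniformly in $n$; hence $\{u_n\}$ converges there, so the convergence set $\mathfrak{C}$ is nonempty and contains a neighbourhood of $\bigcup_k C_k$. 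The Monotone Convergence Theorem \ref{MCT} then gives a solution $u$ on $\mathfrak{C}$, with $u_n\to+\infty$ on compact subsets of $\mathfrak{D}=\Omega\setminus\mathfrak{C}$ and with $\partial\mathfrak{D}$ consisting of $2H$-arcs convex to $\mathfrak{C}$ together with parts of $\partial\Omega$.

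The heart of the argument, and the step I expect to be the main obstacle, is to show $\mathfrak{D}=\varnothing$ from the \emph{strict} inequality (\ref{e4}). Suppose not and let $\mathfrak{U}$ be a component of $\mathfrak{D}$. Since $\mathfrak{D}$ avoids $\bigcup_k C_k$ by the previous paragraph and there are no $B$-arcs, Lemma \ref{b8} shows the interior $2H$-arcs of $\partial\mathfrak{U}$ cannot end at interior points of the $C_k$ and so terminate at vertices of $\Omega$; thus $\overline{\mathfrak{U}}$ is an admissible polygon $\mathfrak{P}$ whose boundary is the disjoint union of $A_i$-arcs (total length $\alpha(\mathfrak{P})$) and interior $2H$-arcs (total length $L$), giving $l(\mathfrak{P})=\alpha(\mathfrak{P})+L$. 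Now I would evaluate $\lim_n F_{u_n}(\partial\mathfrak{P})$. On the $A_i$-arcs, Theorem \ref{S1} gives convergence to the vertical cylinder $\pi^{-1}(A_i)$, so each contributes $+|A_i|$. The delicate point is an interior arc $\eta$: there $u_n\to+\infty$ from inside $\mathfrak{U}$, so Lemma \ref{a17} does not apply on the $\mathfrak{U}$ side; instead I would compute the flux of $\eta$ from the adjacent convergence component $\mathfrak{C}'$, where $\{u_n\}$ is uniformly bounded while $u_n\to+\infty$ on $\eta$, so Lemma \ref{a17}(i) gives $+|\eta|$ relative to the conormal of $\mathfrak{C}'$. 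As the outward conormal of $\mathfrak{U}$ along $\eta$ is opposite to that of $\mathfrak{C}'$, the contribution of $\eta$ to $F_{u_n}(\partial\mathfrak{U})$ tends to $-|\eta|$. Combining with $F_{u_n}(\partial\mathfrak{P})=2HA(\mathfrak{P})$ (Lemma \ref{a12}) yields $2HA(\mathfrak{P})=\alpha(\mathfrak{P})-L$, i.e. $\alpha(\mathfrak{P})=2HA(\mathfrak{P})+L$; substituting into (\ref{e4}) forces $2\alpha(\mathfrak{P})<l(\mathfrak{P})+2HA(\mathfrak{P})=(\alpha(\mathfrak{P})+L)+(\alpha(\mathfrak{P})-L)=2\alpha(\mathfrak{P})$, a contradiction. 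Hence $\mathfrak{D}=\varnothing$ and $u_n\to u$ uniformly on compact subsets of $\Omega$.

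Finally I would verify the boundary values. On each $A_i$, the inequalities $u\geq u_n$ with $u_n|_{A_i}=n$ give $\liminf u\geq n$ for every $n$, hence $u\to+\infty$; on each $C_k$, the uniform bounds from Lemma \ref{a15}(ii) together with the monotone convergence let $u$ extend continuously with $u=f$. Thus $u$ is the desired Jenkins--Serrin solution, which together with the necessity of (\ref{e4}) completes the proof.
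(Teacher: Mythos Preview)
Your approach is essentially the paper's: build an increasing sequence of bounded Dirichlet solutions, apply the Monotone Convergence Theorem, and rule out $\mathfrak{D}\neq\emptyset$ by a flux computation on the admissible polygon bounding a component of $\mathfrak{D}$. Two points deserve correction, however.

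First, the boundary data. You assign $f$ itself on $C_k$ to each $u_n$, but $f$ is only assumed bounded \emph{below} and may blow up near the endpoints of $C_k$; Theorem \ref{DP1} then need not furnish $u_n$. The paper takes $u_n=\min(n,f)$ on $\cup C_k$ (and $n$ on $\cup A_i$), which keeps the data bounded while preserving monotonicity in $n$.

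Second, the flux on the $A_i$-arcs. Your appeal to Theorem \ref{S1} is misplaced: that theorem concerns a single solution blowing up on approach to a boundary arc, whereas here each $u_n$ takes the finite value $n$ on $A_i$ and you are tracking a sequence whose interior values on $\mathfrak{U}$ also diverge; neither Theorem \ref{S1} nor Lemma \ref{a17} applies directly on that side. The paper sidesteps this by using only the trivial estimate $|F_{u_n}(\cup'A_i)|\leq\alpha(\mathfrak{P})$ from Lemma \ref{a12}. Combined with
\[
\lim_{n\to\infty}F_{u_n}\bigl(\partial\mathfrak{P}\setminus\cup'A_i\bigr)=\alpha(\mathfrak{P})-l(\mathfrak{P})
\]
obtained from Lemma \ref{a17} on the interior arcs (exactly as you argue, computing from the adjacent convergence component and flipping the conormal), this already gives $2HA(\mathfrak{P})\leq 2\alpha(\mathfrak{P})-l(\mathfrak{P})$, contradicting (\ref{e4}). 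Your sharper equality $2HA(\mathfrak{P})=\alpha(\mathfrak{P})-L$ would of course suffice, but you have not established it; the inequality is both easier and enough.
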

\begin{proof}
Let $u_n$ be the solution of (\ref{ee6}) in $\Omega$ such that
$$u_n=\left\{
  \begin{array}{ll}
    n \hspace{1.6cm} \textnormal{on} \hspace{.3cm} \cup A_i & \hbox{} \\
   \min(n,f) \hspace{.3cm} \textnormal{on} \hspace{.3cm} \cup C_i, & \hbox{}
  \end{array}
\right.
$$
such solution exits and is unique by the Dirichlet Theorem (Theorem \ref{DP1}). Moreover by the Maximum principle (Theorem \ref{MP}), the sequence $\{u_n\}$ is monotone increasing, so the Monotone Convergence Theorem (Theorem \ref{MCT}) applies. Suppose that, there is a point $p\in\Omega$ such that $u_n(p)\rightarrow+\infty$, then there is a divergence line $L(p)$ passing through $p$, thus the divergence set $\mathfrak{D}$ is not empty. Supposing that the divergence set $\mathfrak{D}$ is composed by a countable divergence lines, so from Lemma \ref{b5}, we have that $\mathfrak{D}$ is the union of pairwise disjoint divergence lines.

By Lemma \ref{b4} an interior arc of $\Omega$ which bounds $\mathfrak{D}$ must be of curvature $2H$ and by Lemma \ref{b8}, it can terminates only among the endpoints of the arcs $A_i$ or $C_i$. Moreover by Lemma \ref{a15} a neighbourhood of each $C_i$ is contained in $\mathfrak{C}$. Therefore, the boundary of each component of $\mathfrak{D}$ is an admissible polygon $\mathfrak{P}$ with vertices among those of the $A_i$ and $C_i$. Also the curvature $2H$ arcs forming the boundary which are not among the $A_i$ are concave to $\mathfrak{D}$ by Theorem \ref{S1}.

By Lemma \ref{a12} applied to each $u_n$ in $\mathfrak{P}$
\begin{equation}\label{e5}
2HA(\mathfrak{P})=F_{u_n}(\partial\mathfrak{P}-\cup^\prime A_i)+ F_{u_n}(\cup^\prime A_i)
\end{equation}
where $\cup^\prime A_i$ is the union of the arcs $A_i$ which are part of $\mathfrak{P}$. Then by Lemma \ref{a17}
\begin{equation}\label{e6}
\lim_{n\longrightarrow+\infty}F_{u_n}(\partial\mathfrak{P}-\cup^\prime A_i) =\alpha(\mathfrak{P})-l(\mathfrak{P})=-(l(\mathfrak{P})-\alpha(\mathfrak{P})).
\end{equation} 
But $\vert F_{u_n}(\cup^\prime A_i)\vert\leq \alpha(\mathfrak{P})$, hence from (\ref{e5})
$$2HA(\mathfrak{P})\leq(\alpha(\mathfrak{P})-l(\mathfrak{P}))+\alpha(\mathfrak{P})$$
that is
$$2HA(\mathfrak{P})+l(\mathfrak{P})\leq 2\alpha(\mathfrak{P}),$$
contradicting our assumption (\ref{e4}). Thus $\mathfrak{D}$ is empty and the sequence converges uniformly on compact subsets of $\Omega$ to a solution $u$. Since each $u_n$ is uniformly bounded in a neighbourhood of each $C_i$ by Lemma \ref{a15}, a standard barrier argument shows that $u=f$ on $\cup C_i$.

Since the necessity of (\ref{e4}) is clear, this completes the proof. 
\end{proof}
Similarly, we have the next theorem.
\begin{proposition}\label{T4}
Consider the Jenkins-Serrin problem in an admissible domain $\Omega$ and suppose that the family $\{A_i\}$ is empty, $k(C_i)>2H$ and the assigned data $f$ on the arcs $C_i$ are bounded above. Then, there exists a Jenkins-Serrin solution if and only if 
\begin{equation}\label{e7}
2\beta(\mathfrak{P})<l(\mathfrak{P})-2HA(\mathfrak{P})
\end{equation}
for all admissible polygon $\mathfrak{P}$. 
\end{proposition}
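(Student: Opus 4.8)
The plan is to mirror the proof of Proposition \ref{T3}, working with a monotonically decreasing sequence of solutions rather than an increasing one, and invoking each auxiliary result in its dual form. Since the family $\{A_i\}$ is empty and the data $f$ on the arcs $C_i$ are bounded above, I would set up the sequence $\{u_n\}$ of solutions of (\ref{ee6}) in $\Omega$ prescribing
\begin{equation*}
u_n=\left\{
  \begin{array}{ll}
    -n & \textnormal{on} \hspace{.3cm} \cup B_i, \\
    \max(-n,f) & \textnormal{on} \hspace{.3cm} \cup C_i,
  \end{array}
\right.
\end{equation*}
which exists and is unique by Theorem \ref{DP1}. By the Maximum principle (Theorem \ref{MP}) the sequence $\{u_n\}$ is monotone decreasing, so the Monotone Convergence Theorem (Theorem \ref{MCT}) applies and yields a convergence set $\mathfrak{C}$ and divergence set $\mathfrak{D}=\Omega-\mathfrak{C}$.

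The core of the argument is to show that $\mathfrak{D}$ is empty under hypothesis (\ref{e7}). Supposing for contradiction that some $p\in\Omega$ has $u_n(p)\rightarrow-\infty$, Lemma \ref{b4} produces a divergence line $L(p)$ of curvature $2H$, so $\mathfrak{D}\neq\emptyset$; after passing to a subsequence via Lemma \ref{b5}, $\mathfrak{D}$ is a union of pairwise disjoint divergence lines. The boundary arcs of a component of $\mathfrak{D}$ have curvature $2H$ by Lemma \ref{b4}, and by Lemma \ref{b8} they can terminate only among the endpoints of the $B_i$ or $C_i$; because $k(C_i)>2H$ strictly, Lemma \ref{a15}(ii) forces a neighbourhood of each $C_i$ into $\mathfrak{C}$, so no $C_i$ bounds $\mathfrak{D}$. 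Hence the boundary of each component of $\mathfrak{D}$ is an admissible polygon $\mathfrak{P}$ whose $2H$-arcs that are not among the $B_i$ are convex to $\mathfrak{D}$ (decreasing sequence, Theorem \ref{MCT}).

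Applying Lemma \ref{a12} to each $u_n$ on $\mathfrak{P}$ gives
\begin{equation*}
2HA(\mathfrak{P})=F_{u_n}(\partial\mathfrak{P}-\cup^\prime B_i)+F_{u_n}(\cup^\prime B_i),
\end{equation*}
where $\cup^\prime B_i$ denotes the $B_i$-arcs forming part of $\partial\mathfrak{P}$. The dual of Lemma \ref{a17}(ii) yields $\lim_{n\to+\infty}F_{u_n}(\partial\mathfrak{P}-\cup^\prime B_i)=l(\mathfrak{P})-\beta(\mathfrak{P})$, while $\vert F_{u_n}(\cup^\prime B_i)\vert\leq\beta(\mathfrak{P})$. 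Combining these, $2HA(\mathfrak{P})\geq(l(\mathfrak{P})-\beta(\mathfrak{P}))-\beta(\mathfrak{P})$, that is $2\beta(\mathfrak{P})\geq l(\mathfrak{P})-2HA(\mathfrak{P})$, contradicting (\ref{e7}). Therefore $\mathfrak{D}=\emptyset$, the sequence converges uniformly on compact subsets of $\Omega$ to a solution $u$, and since each $u_n$ is uniformly bounded near each $C_i$ by Lemma \ref{a15}(ii), a standard barrier argument gives $u=f$ on $\cup C_i$.

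The step I expect to be the main obstacle is the correct bookkeeping of signs and orientations when passing to the decreasing/"minus infinity" situation: one must verify that the limiting flux along the $B_i$ arcs is $-\beta(\mathfrak{P})$ and that the remaining boundary contributes $+(l(\mathfrak{P})-\beta(\mathfrak{P}))$, i.e. that the geometric picture of divergence lines convex to $\mathfrak{C}$ for decreasing sequences is consistent with Theorem \ref{S1} and Lemma \ref{b8}. The necessity of (\ref{e7}) is immediate from the flux computation preceding Remark \ref{r2}, so only sufficiency requires this contradiction argument.
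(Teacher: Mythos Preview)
Your proof is correct and follows exactly the paper's intended approach: the paper states Proposition~\ref{T4} immediately after proving Proposition~\ref{T3} with only the words ``Similarly, we have the next theorem,'' and your dualization---decreasing sequence, Lemma~\ref{a15}(ii) in place of~(i), interior arcs convex rather than concave to $\mathfrak{D}$, and the flux contradiction $2\beta(\mathfrak{P})\geq l(\mathfrak{P})-2HA(\mathfrak{P})$---is precisely that. The one point worth flagging is that Theorem~\ref{DP1} as stated requires $2H\leq k(s)$ along all of $\partial\Omega$, which the arcs $B_i$ (curvature $-2H$) do not satisfy, so the existence of the approximants $u_n$ is not a literal consequence of~\ref{DP1}; the paper is equally silent on this in its ``similarly,'' and in the later proofs of the main theorems it handles the presence of $B_i$ arcs by working on the reflected domain $\Omega^\ast$ where each $B_i$ is replaced by $B_i^\ast$.
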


Now we use Proposition \ref{T3} and Proposition \ref{T4} to construct some usefull barriers in order to remove the assumption $k(C_i)>2H$.
\begin{example}\label{e}
Let $B=B_\delta(P)$ be a ball of small radius $\delta$ centered at $P$, and let $Q$ and $R$ be the "antipodal" points on $\partial B$. choose points $Q_1$ and $Q_2$ on $\partial B$ and symmetric with respect to the geodesic through $QPR$. Now let $B_1$ be an arc of curvature $-2H$ (as seem from $P$) joining $Q_1$ and $Q_2$ and set $A_1=B_i^\ast$ (the geodesic reflection of $B_1$ across its endpoints). Let $R_1$ and $R_2$ on $\partial B$ be reflections on $\partial B$ of $Q_1$ and $Q_2$ (with respect to the geodesic orthogonal to $QPR$ through $P$) and define $B_2$ and $A_2$. Then for $\delta$ small compared with $H$, the domain $B^+$ bounded by $A_1$, $A_2$ and parts of $\partial B$ satisfies the conditions of Proposition \ref{T3} and similarly, the domain $B^-$ bounded by $B_1$, $B_2$ and parts of $\partial B$ satisfies the condition of Proposition \ref{T4}.

Let $u^+$ be the solution of (\ref{ee6}) in $B^+$ with boundary values $+\infty$ on $A_1\cup A_2$ and the constant value $M$ on the remainder of the boundary. Similarly, let $u^-$ be the solution of (\ref{ee6}) in $B^-$ with boundary values $-\infty$ on $B_1\cup B_2$ and the constant value $-M$ on the remainder of the boundary.
\end{example}
With this example we obtain the following proposition.
\begin{proposition}\label{T5}
Let $\Omega$ be a domain bounded in part by an arc $\gamma$ and let $\{u_n\}$ be a sequence of solutions of (\ref{ee6}) in $\Omega$ which converges uniformly on compact subsets of $\Omega$ to a solution $u$. Suppose each $u_n$ is continuous on $\Omega\cup\gamma$, then
\begin{itemize}
\item[(i)] Suppose the boundary values of $u_n$ converges uniformly on compact subsets of $\gamma$ to a bounded limit $f$. If $k(\gamma)\geq2H$, then $u$ is continuous on $\Omega\cup\gamma$ and $u=f$ on $\gamma$.
\item[(ii)] If $k(\gamma)=2H$ and the boundary values of $u_n$ diverges uniformly to $+\infty$ on compact subsets of $\gamma$, then $u$ takes on the boundary values $+\infty$ on $\gamma$.
\item[(iii)] If $k(\gamma)=-2H$ and the boundary values of $u_n$ diverges to $-\infty$ on compact subsets of $\gamma$, then $u$ takes on the boundary values $-\infty$ on $\gamma$.
\end{itemize}
\end{proposition}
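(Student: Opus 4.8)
The plan is to argue locally at each point $p$ of the arc $\gamma$, comparing the $u_n$ (and hence their limit $u$) against explicit barriers built on the small caps $\Omega_\epsilon:=\Omega\cap B_\epsilon(p)$ through the Dirichlet Theorem \ref{DP1} and the Maximum principle (Theorem \ref{MP}). The guiding remark is that, for $p$ interior to $\gamma$ and $\epsilon$ small, $\partial\Omega_\epsilon$ splits into a subarc $\gamma_\epsilon\subset\gamma$ and a short circular arc $\sigma_\epsilon\subset\partial B_\epsilon(p)$ of curvature $1/\epsilon$; after rounding the two corners one may apply Theorem \ref{DP1} on $\Omega_\epsilon$ exactly when every boundary arc has curvature $\geq 2H$, i.e. precisely in the cases $k(\gamma)\geq 2H$. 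I expect the genuinely different case to be (iii), where $k(\gamma)=-2H<2H$ places $\Omega_\epsilon$ outside the scope of Theorem \ref{DP1}; there I will replace the barrier by a flux computation.

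For (i) I would build two barriers on $\Omega_\epsilon$. Since $k(\gamma_\epsilon)\geq 2H$ and $1/\epsilon\geq 2H$, Theorem \ref{DP1} produces solutions $\bar w$ and $\underline w$ of (\ref{ee6}) on $\Omega_\epsilon$ with boundary values $f(p)+\epsilon$ (resp. $f(p)-\epsilon$) on $\gamma_\epsilon$ and $+C$ (resp. $-C$) on $\sigma_\epsilon$, where $C$ bounds $u$ on the compact set $\overline{\sigma_\epsilon}$. Using the continuity of $f$ at $p$ together with the uniform convergence $u_n|_{\gamma_\epsilon}\to f$, one gets $\underline w\leq u_n\leq\bar w$ on $\partial\Omega_\epsilon$ for $n$ large, whence $\underline w\leq u\leq\bar w$ on $\Omega_\epsilon$ by Theorem \ref{MP}. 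Letting $q\to p$ and then $\epsilon\to 0$ squeezes $u(q)\to f(p)$, which is the continuity of $u$ up to $\gamma$ and the identity $u=f$ on $\gamma$.

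For (ii) the same cap $\Omega_\epsilon$ is admissible for Theorem \ref{DP1}, since now $k(\gamma)=2H$. For each $L>0$ let $v_L$ be the solution with value $L$ on $\gamma_\epsilon$ and $-C$ on $\sigma_\epsilon$. Because $u_n\to+\infty$ uniformly on $\gamma_\epsilon$ and $u_n\geq -C$ on $\sigma_\epsilon$ for $n$ large, we have $v_L\leq u_n$ on $\partial\Omega_\epsilon$, hence $v_L\leq u$ on $\Omega_\epsilon$ by Theorem \ref{MP}; as $v_L$ is continuous up to $\gamma_\epsilon$ with value $L$, this gives $\liminf_{q\to p}u(q)\geq L$, and letting $L\to+\infty$ forces $u\to+\infty$ on $\gamma$.

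The main obstacle is (iii): when $k(\gamma)=-2H$ the finite upper barrier one would like (value $-L$ on $\gamma_\epsilon$, large on $\sigma_\epsilon$) is no longer supplied by Theorem \ref{DP1}, and in fact any downward barrier on $\Omega_\epsilon$ must itself diverge to $-\infty$ on $\gamma$, so it cannot dominate the finite $u_n$ there. I would therefore abandon comparison and pass to the flux. By the Compactness Theorem \ref{CT} the convergence $u_n\to u$ is $C^1$ on compact subsets of $\Omega$, so evaluating the flux through a fixed interior curve $\zeta$ as in (\ref{a9}) gives $F_{u_n}(\gamma)\to F_u(\gamma)$; Lemma \ref{a17}(ii) yields $F_{u_n}(\gamma)\to-|\gamma|$, hence $F_u(\gamma)=-|\gamma|$. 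Since $|X_u|<1$ pointwise and $F_u(\gamma')\geq-|\gamma'|$ for every subarc $\gamma'$ (Lemma \ref{a12}), additivity of the flux forces $F_u(\gamma')=-|\gamma'|$ on every subarc; this rules out $u$ being bounded on any subarc, for otherwise $\langle X_u,\nu\rangle\geq-1+\delta$ there would give $F_u(\gamma')>-|\gamma'|$. Thus the normal of $\Sigma_u$ becomes horizontal along $\gamma$, and since $k(\gamma)=-2H$ Theorem \ref{S1} forbids divergence to $+\infty$, leaving $u\to-\infty$ on $\gamma$. The delicate point I expect to spend most effort on is exactly this last implication — upgrading the saturated flux $F_u(\gamma)=-|\gamma|$ to the genuine limit $u\to-\infty$ and excluding oscillatory boundary behaviour — which is where Theorem \ref{S1} and the strict bound $|X_u|<1$ do the real work.
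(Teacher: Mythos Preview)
Your treatments of (i) and (ii) are correct and parallel the paper's, which also argues by local barriers near an interior point $P\in\gamma$; the only difference is that the paper does not invoke Theorem~\ref{DP1} on the cap $\Omega_\epsilon$ directly but uses instead the barriers $u^{\pm}$ of Example~\ref{e}, built via Propositions~\ref{T3}--\ref{T4} on auxiliary domains $B^{\pm}$ with $\pm\infty$ values on arcs of curvature $\pm 2H$ that lie \emph{inside} $\Omega$ rather than on $\gamma$.

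The genuine gap is in (iii). You correctly note that Theorem~\ref{DP1} is unavailable on $\Omega_\epsilon$ when $k(\gamma)=-2H$, but the flux substitute does not close. From $F_u(\gamma')=-|\gamma'|$ for every subarc you cannot infer $u\to-\infty$: Theorem~\ref{S1} only rules out $u\to+\infty$, and the pointwise inequality $|X_u|<1$ in $\Omega$ gives no boundary control; your step ``otherwise $\langle X_u,\nu\rangle\ge-1+\delta$ on $\gamma'$'' presupposes that $X_u$ extends to $\gamma'$ with a uniform bound, which is exactly what is at issue, and the only strict flux inequality available (Lemma~\ref{l2}) requires $k\ge 2H$, not $-2H$. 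Nothing in your argument excludes $u$ staying bounded or oscillating along $\gamma$ while the flux is saturated. The paper does not attempt a flux proof here. It uses the Example~\ref{e} barrier $u^{+}$, whose $+\infty$ arcs $A_1,A_2$ sit in $\Omega$, to obtain a uniform upper bound $u_n\le M$ in $W=B_\epsilon(P)\cap\Omega$; then it compares $u_n$ with solutions $v_m$ in $W$ taking the value $-m$ on $\gamma\cap B_\epsilon(P)$ and $M$ on the remaining boundary, so that $u_n\le v_m$ for $n$ large by Theorem~\ref{MP}, hence $u\le v_m$; since the $v_m$ decrease to a solution equal to $-\infty$ on $\gamma$, this forces $u\to-\infty$ at $P$. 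The idea you are missing is that Example~\ref{e} was set up precisely to furnish barriers on caps whose $\gamma$-side has curvature $\pm 2H$, bypassing the failure of Theorem~\ref{DP1} without recourse to flux.
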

\begin{proof}
$(i)$ It suffices to prove that the sequence $\{u_n\}$ is uniformly bounded in the intersection of $\Omega$ with a neighbourhood of any interior point $P$ of $\gamma$. Orient the ball of Example \ref{e}, so that the geodesic joining $QPR$ is tangent to $\partial\Omega$. We may choose the points $Q_i$ $i=1,2$ and $\delta$ small enough so that the arc joining $Q_2$ and $R_2$ lies in a compact subset of $\Omega$. Then if $M$ is large enough 
$$u_n\leq u^+ \hspace{.1cm}\textnormal{in} \hspace{.1cm} \Omega\cap B^+ \hspace{.3cm} \textnormal{and} \hspace{.3cm} u_n\geq u^-\hspace{.1cm} \textnormal{in} \hspace{.1cm} \Omega\cap B^-.$$
Therefore, the sequence is uniformly bounded in a neighbourhood of $P$.

$(ii)$ Let $P$ be an interior point of $\gamma$. Similarly as in $(i)$, we obtain that there exists $M$ large enough so that 
$$u_n\geq -M\hspace{.1cm} \textnormal{in}\hspace{.1cm} W=B_\epsilon(P)\cap\Omega.$$
Let $v_m$ be the solution of (\ref{ee6}) in $W$ with boundary values $m$ on $\gamma\cap B_\epsilon(P)$ and $-M$ on the remaining boundary. By the Maximum Principle $u_n\geq v_m$ for $n$ sufficiently large so $u\geq v_m$ in $W$. In particular, $u(P)>m$ for every $m$ and $u$ must take on the value $+\infty$ at $P$. 

$(iii)$ Again for $P$ interior to $\gamma$, $u_n\leq M$ in $W=B_\epsilon(P)\cap\Omega$. Let $v_m$ be the solution of (\ref{ee6}) in $W$ with boundary values $-m$ on $\gamma\cap B_\epsilon(P)$ and $M$ on the remaining boundary. By the Maximum Principle $u_n\leq v_m$ for $n$ sufficiently large, so $u\leq v_m$ in $W$. Since the $v_m$ are monotonically decreasing (and converges to a solution with infinite boundary values on $\gamma\cap B_\epsilon(P)$), $u$ must takes on the value $-\infty$ at $P$.
\end{proof}

We can now extend Proposition \ref{T3} and Proposition \ref{T4} to allow the arcs $C_i$ to satisfy $k(C_i)\geq2H$. The only changed needed in the proof of Proposition \ref{T3} is to use part $(i)$ of Proposition \ref{T5} to show that the solution takes on the required boundary data on the arcs $C_i$. The extension of Proposition \ref{T4} is more delicate. Since if $k(C_i)=2H$ for some $i$, we do not know that the sequence is bounded bellow in a neighbourhood of $C_i$. However by Lemma \ref{b8} a neighbourhood of $C_i$ is either contained in $\mathfrak{C}$ or in $\mathfrak{D}$. We have already handled the former case. In the latter case, consider a component of $\mathfrak{D}$ whose boundary is an admissible polygon $\mathfrak{P}$ (with vertices among those of the $B_i$ and $C_i$) containing a subset $\cup^\prime C_i$ of the arcs $C_i$ of curvature $2H$ and a subset $\cup^\prime B_i$ of the arc $B_i$. The interior arcs of $\Omega$ which are in $\mathfrak{P}$ are convex to $\mathfrak{D}$. By Lemma \ref{a12} applied to each $u_n$ in $\mathfrak{P}$
\begin{equation}\label{e8}
2HA(\mathfrak{P})=F_{u_n}(\cup^\prime B_i)+F_{u_n}(\cup^\prime C_i) +F_{u_n}(\partial\mathfrak{P}-\cup^\prime B_i-\cup^\prime C_i)
\end{equation}
Then by Lemma \ref{a17}
\begin{equation}\label{e9}
\lim_{n\longrightarrow+\infty}F_{u_n}(\partial\mathfrak{P}-\cup^\prime B_i-\cup^\prime C_i) = l(\mathfrak{P})-\beta(\mathfrak{P})-\Sigma^\prime\vert C_i\vert
\end{equation}
and by Lemma \ref{b9}
\begin{equation}\label{e10}
\lim_{n\longrightarrow+\infty}F_{u_n}(\cup^\prime C_i)=\Sigma^\prime\vert C_i\vert
\end{equation}
But $\vert F_{u_n}(\cup^\prime B_i)\vert\leq \beta(\mathfrak{P})$, hence
$$2HA(\mathfrak{P})\geq-\beta(\mathfrak{P})+\Sigma^\prime\vert C_i\vert + (l(\mathfrak{P})-\Sigma^\prime\vert C_i\vert-\beta(\mathfrak{P})) =l(\mathfrak{P})-2\beta(\mathfrak{P})$$
contradicting our assumption (\ref{e7}). 

Thus $\mathfrak{D}$ is empty and the sequence converges uniformly on compact subsets of $\Omega$ to a solution $u$. Finally we use parts $(i)$ and $(ii)$ of Proposition \ref{T5} to show that our solution achieves the boundary values.

We state these result as
\begin{theorem}\label{T6}
Consider the Jenkins-Serrin problem in an admissible domain $\Omega$ and suppose the family $\{B_i\}$ is empty and the assigned data $f$ on the arcs $C_i$ are bounded bellow. Then, there exists a solution if and only if 
\begin{equation}\label{e11}
2\alpha(\mathfrak{P})<l(\mathfrak{P})+2HA(\mathfrak{P})
\end{equation}
for all admissible polygons $\mathfrak{P}$.
\end{theorem}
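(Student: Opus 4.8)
The plan is to follow the proof of Proposition \ref{T3} almost verbatim, the only essential difference being the treatment of the boundary values on those arcs $C_i$ where $k(C_i)=2H$; this is exactly where the strict inequality hypothesis of Proposition \ref{T3} was used, so it is the one spot that needs a new argument.

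First I would set up the monotone approximating sequence exactly as before: let $u_n$ be the unique solution of (\ref{ee6}) in $\Omega$ (guaranteed by Theorem \ref{DP1}) with boundary data $u_n=n$ on each $A_i$ and $u_n=\min(n,f)$ on each $C_i$. The hypothesis that $f$ is bounded below keeps this data admissible for every $n$. By the Maximum Principle (Theorem \ref{MP}) the sequence $\{u_n\}$ is monotone increasing, so the Monotone Convergence Theorem (Theorem \ref{MCT}) applies and produces a convergence set $\mathfrak{C}$ and a divergence set $\mathfrak{D}$, with $\{u_n\}$ converging to a solution $u$ on $\mathfrak{C}$.

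Next I would show that $\mathfrak{D}=\emptyset$ under hypothesis (\ref{e11}), by the same flux computation as in Proposition \ref{T3}: Lemma \ref{b4} forces each interior boundary arc of a component of $\mathfrak{D}$ to have curvature $2H$ and, by Lemma \ref{b8}, such an arc can terminate only at endpoints of the $A_i$ or $C_i$; moreover Lemma \ref{a15} places a neighbourhood of each $C_i$ inside $\mathfrak{C}$. Hence the boundary of any component of $\mathfrak{D}$ is an admissible polygon $\mathfrak{P}$, and combining Lemma \ref{a12} with Lemma \ref{a17} on the family $\{u_n\}$ over $\mathfrak{P}$ yields $2HA(\mathfrak{P})+l(\mathfrak{P})\leq 2\alpha(\mathfrak{P})$, contradicting (\ref{e11}). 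Therefore $\mathfrak{D}$ is empty and $\{u_n\}$ converges uniformly on compact subsets of $\Omega$ to a solution $u$.

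The only genuinely new point, and the step I expect to be the main obstacle, is verifying that $u$ attains the prescribed data on the arcs $C_i$ in the borderline case $k(C_i)=2H$. In Proposition \ref{T3} the strict condition $k(C_i)>2H$ let Lemma \ref{a15} supply two-sided uniform bounds near $C_i$, after which a direct barrier comparison finished the argument; when $k(C_i)=2H$ that barrier is no longer available. Instead I would invoke part $(i)$ of Proposition \ref{T5}: since the boundary values of $u_n$ on $C_i$ converge uniformly on compact subsets to the bounded limit $f$ and $k(C_i)\geq2H$, Proposition \ref{T5}$(i)$ guarantees that $u$ extends continuously up to $C_i$ with $u=f$ there. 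Finally, the necessity of (\ref{e11}) is precisely the flux estimate already established for Jenkins-Serrin solutions in the discussion preceding the main theorems, so no further work is needed and the proof is complete.
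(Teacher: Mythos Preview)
Your proposal is correct and follows the paper's argument essentially verbatim: the paper states explicitly that ``the only change needed in the proof of Proposition \ref{T3} is to use part $(i)$ of Proposition \ref{T5} to show that the solution takes on the required boundary data on the arcs $C_i$,'' which is precisely the modification you single out. Your identification of why the barrier from Lemma \ref{a15} no longer suffices when $k(C_i)=2H$, and why Proposition \ref{T5}$(i)$ fills the gap, matches the paper's reasoning exactly.
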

\begin{theorem}\label{T7}
Consider the Jenkins-Serrin problem in an admissible domain $\Omega$ and suppose the family $\{A_i\}$ is empty and the assigned data $f$ on the arcs $C_i$ are bounded above. Then, there exists a solution if and only if
\begin{equation}\label{e12}
2\beta(\mathfrak{P})<l(\mathfrak{P})-2HA(\mathfrak{P})
\end{equation}
for all admissible domain.
\end{theorem}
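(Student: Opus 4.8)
The plan is to mirror the proof of Theorem \ref{T6} under the duality interchanging the roles of $+\infty$ and $-\infty$. Conceptually, the reflection $\Phi(x,y,z)=(-x,y,-z)$ is an isometry of $\Nil$ covering the base reflection $(x,y)\mapsto(-x,y)$ and reversing the fibre direction; it carries a graph $\Sigma(u)$ of upward mean curvature $H$ to $\Sigma(\tilde u)$ with $\tilde u(x,y)=-u(-x,y)$, whose upward mean curvature is $-H$, turning each $B_i$ (where $u\to-\infty$, $k=-2H$) into an arc along which $\tilde u\to+\infty$ and each $C_i$ with data $f$ bounded above into a $C$-type arc with data $-f$ bounded below. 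Thus, after replacing $H$ by $-H$ throughout, Theorem \ref{T7} is the $\Phi$-image of Theorem \ref{T6}, which explains why the two statements are mirror images. The necessity of \eqref{e12} needs no new work: it is precisely the second flux inequality already derived for an arbitrary Jenkins-Serrin solution in the computation preceding Remark \ref{r2}.

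For the rigorous route I would run the decreasing analogue of Proposition \ref{T3}. First I would produce a monotone decreasing sequence $\{u_n\}$ of solutions of equation \eqref{ee6} with boundary values $-n$ on each $B_i$ and $\max(f,-n)$ on each $C_i$ (existence being supplied by Proposition \ref{T4}, equivalently by transporting the increasing sequence of Proposition \ref{T3} through $\Phi$), and then invoke the Monotone Convergence Theorem \ref{MCT}. If the divergence set $\mathfrak{D}$ were non-empty, Lemmas \ref{b4}, \ref{b5} and \ref{b8} would force $\partial\mathfrak{D}$ to be an admissible polygon $\mathfrak{P}$ with vertices among the endpoints of the $B_i$ and $C_i$, whose interior arcs have curvature $2H$ and are convex to $\mathfrak{D}$. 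Applying the flux identity of Lemma \ref{a12} to each $u_n$ on $\mathfrak{P}$ and passing to the limit with the aid of Lemmas \ref{a17} and \ref{b9} (the latter giving $\lim_n F_{u_n}(\cup^\prime C_i)=\Sigma^\prime|C_i|$ for the $C_i$ of curvature $2H$), together with the bound $|F_{u_n}(\cup^\prime B_i)|\le\beta(\mathfrak{P})$, I would obtain $2HA(\mathfrak{P})\ge l(\mathfrak{P})-2\beta(\mathfrak{P})$, i.e. $2\beta(\mathfrak{P})\ge l(\mathfrak{P})-2HA(\mathfrak{P})$, contradicting \eqref{e12}. Hence $\mathfrak{D}=\emptyset$, the sequence converges uniformly on compact subsets to a solution $u$, and parts $(i)$ and $(iii)$ of Proposition \ref{T5} show that $u=f$ on the $C_i$ and $u\to-\infty$ on the $B_i$.

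The hard part will be the arcs $C_i$ with $k(C_i)=2H$. When $k(C_i)>2H$ (the situation of Proposition \ref{T4}), Lemma \ref{a15}(ii) pins a neighbourhood of $C_i$ inside $\mathfrak{C}$ and the boundary data are reached by an ordinary barrier. In the borderline case $k(C_i)=2H$ this boundedness is unavailable, so a priori such a neighbourhood could lie in $\mathfrak{D}$; Lemma \ref{b8} guarantees the dichotomy that a neighbourhood of $C_i$ lies entirely in $\mathfrak{C}$ or entirely in $\mathfrak{D}$, and the flux computation above is exactly what excludes the $\mathfrak{D}$-alternative, with Lemma \ref{b9} rather than Lemma \ref{a17} supplying the decisive contribution of the $C_i$. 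A secondary subtlety, already present in Proposition \ref{T4}, is that the $B_i$ carry $k=-2H<2H$, so Theorem \ref{DP1} does not solve the Dirichlet problem with finite data on them directly; this is why the decreasing sequence must be obtained through the reflection $\Phi$ from the $k\ge 2H$ setting rather than constructed on $\Omega$ outright.
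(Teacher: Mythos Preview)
Your rigorous route is exactly the paper's argument: start from the decreasing sequence of Proposition~\ref{T4}, use Lemma~\ref{b8} to obtain the dichotomy that a neighbourhood of each $C_i$ with $k(C_i)=2H$ lies entirely in $\mathfrak{C}$ or entirely in $\mathfrak{D}$, run the flux computation on a component of $\mathfrak{D}$ via Lemmas~\ref{a12}, \ref{a17} and~\ref{b9} to reach $2HA(\mathfrak{P})\ge l(\mathfrak{P})-2\beta(\mathfrak{P})$ contradicting \eqref{e12}, and finish with Proposition~\ref{T5}(i),(iii). This is precisely what the paper does in the paragraph preceding the statement of Theorem~\ref{T7}.

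One correction concerning the reflection $\Phi(x,y,z)=(-x,y,-z)$: it is indeed an isometry of $\Nil$, but, as you observe, it turns upward mean curvature $H$ into upward mean curvature $-H$. Hence it does \emph{not} reduce Theorem~\ref{T7} for a given $H>0$ to Theorem~\ref{T6} for that same $H$, and it cannot be used to produce solutions of \eqref{ee6} with the prescribed $H$ out of other solutions of \eqref{ee6}. In particular your proposed fix for the ``secondary subtlety''---obtaining the $u_n$ on a domain bounded in part by $B_i$ with $k(B_i)=-2H$ by transporting through $\Phi$---fails for this reason. The device the paper actually uses (implicitly here, explicitly in the proofs of Theorems~\ref{T1} and~\ref{T2}) is to solve the finite-data Dirichlet problem in the enlarged domain $\Omega^{\ast}$ obtained by replacing each $B_i$ by its reflected arc $B_i^{\ast}$ of curvature $+2H$, where Theorem~\ref{DP1} applies, and then restrict to $\Omega$. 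This does not affect your main flux argument, which is correct.
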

Now we prove the Main Theorem 2 (Theorem \ref{T2}). That is, we allow both families $\{A_i\}$ and $\{B_i\}$ to occur and allow the data $f$ on the $\{C_i\}$ to be unbounded both above and bellow as we approach the endpoints.
\begin{proof}(Main Theorem 2)
By Theorem \ref{T6} the first condition of (\ref{e3}) guarantees the existence of a solution $u^+$ of (\ref{ee6}) in $\Omega^\ast$ such that
$$u^+=\left\{
  \begin{array}{ll}
    +\infty \hspace{1.6cm} \textnormal{on} \hspace{.3cm} \cup A_i & \hbox{} \\
    0 \hspace{2.05cm} \textnormal{on} \hspace{.3cm} \cup B^\ast_i & \hbox{} \\
   \max(f,0) \hspace{.6cm} \textnormal{on} \hspace{.3cm} \cup C_i. & \hbox{}
  \end{array}
\right.
$$
Similarly, by Theorem \ref{T7}, the second condition of (\ref{e3}) guarantees the existence of a solution $u^-$ of (\ref{ee6}) in $\Omega^*$ such that
$$u^-=\left\{
  \begin{array}{ll}
    -\infty \hspace{1.6cm} \textnormal{on} \hspace{.3cm} \cup B_i & \hbox{} \\
    0 \hspace{2.05cm} \textnormal{on} \hspace{.3cm} \cup A_i & \hbox{} \\
   \min(f,0) \hspace{.6cm} \textnormal{on} \hspace{.3cm} \cup C_i. & \hbox{}
  \end{array}
\right.
$$
Now let $u_n$ be the solution of (\ref{ee6}) in $\Omega^*$ such that
$$u_n=\left\{
  \begin{array}{ll}
    n \hspace{1.6cm} \textnormal{on} \hspace{.3cm} \cup A_i & \hbox{} \\
    -n \hspace{1.3cm} \textnormal{on} \hspace{.3cm} \cup B^\ast_i & \hbox{} \\
   f_n \hspace{1.5cm} \textnormal{on} \hspace{.3cm} \cup C_i, & \hbox{}
  \end{array}
\right.
$$
where $f_n$ is the truncation of $f$ above by $n$ and bellow by $-n$.

By the Maximum Principle
$$u_n\leq u^+ \hspace{.1cm} \textnormal{in} \hspace{.1cm} \Omega^* \hspace{.3cm} \textnormal{and} \hspace{.3cm} u^-\leq u_n \hspace{.1cm} \textnormal{in} \hspace{.1cm} \Omega.$$
Therefore the sequence $\{u_n\}$ is uniformly bounded on compact subsets of $\Omega$ and a subsequence converges uniformly on compact subsets to a solution $u$ in $\Omega$. By Proposition \ref{T5}, $u$ takes on the boundary assigned data. The necessity of the condition (\ref{e3}) follows essentially as in the Theorem \ref{T6} and Theorem \ref{T7}. 

\end{proof}
Finally, we focus our attention in the proof of the Main Theorem 1.

\begin{proof}(Main Theorem 1)
let $v_n$ be a solution of (\ref{ee6}) in $\Omega^\ast$ with boundary values $n$ on each $A_i$ and $0$ on each $B_i^\ast$. For $0<c<n$ we define for $n\geq1$
$$E_c=\{v_n-v_0>0\} \hspace{.2cm} \textnormal{and} \hspace{.2cm} F_c=\{v_n-v_0<c\};$$
we suppress the dependence of these sets on $n$. Let $E^i_c$ and $F^i_c$ denote respectively the components of $E_c$ and $F_c$ whose closure contains respectively $A_i$ and $B^\ast_i$. By the maximum principle $E_c=\cup E^i_c$ and $F_c=\cup F^i_c$. If $c$ is sufficiently close to $n$, the sets $\{E^i_c\}$ will be distinct and disjoint (to see this, note that we can separate any two of the $A_i$ by a curve joining two of the $B^\ast_i$ on which $v_n-v_0$ is bounded away from $n$). Now we define $\mu(n)$ to be the infimum of the constants $c$ such that the sets $\{E_c^i\}$ are distinct and disjoints. The sets $\{E_\mu^i\}$ will again be distinct although there must be at least one pair $(i,j)$, $i\neq j$ such that $\overline{E}^i_\mu\cap\overline{E}^j_\mu$ is nonempty. This implies that given any $F_\mu^i$ there is some $F^j_\mu$ distinct from it. Now let $u^+_i$, $i=1,...,k$ be the solution of (\ref{ee6}) in $\Omega^\ast$ taking on the boundary values $+\infty$ on $A_i$ and $0$ on the remaining boundary. This solution exists by Theorem \ref{T6} since the solvability condition (\ref{e11}) follows trivially from (\ref{e1}) and (\ref{e2}). Also let $u^-_i$ be the solution of (\ref{ee6}) in the domain $\widetilde{\Omega}$ bounded by $\cup A_i,B^\ast_i,\cup_{j\neq i}B_j$ taking on the boundary values $-\infty$ on $\cup_{j\neq i}B_j$ and 0 on the remaining of the boundary. In order to know that the solution exists by Theorem \ref{T7}, we need to verify (\ref{e12}), thus we need only consider the admissible polygon $\overline{\mathfrak{P}}$ in $\widetilde{\Omega}$ which contain the lens domain $L$ formed by $B_i$ and $B^\ast_i$. Let $\mathfrak{P}$ be the corresponding admissible polygon for $\Omega$ formed by deleting $L$. By (\ref{e1}) and (\ref{e2}) we have
$$2\beta=2(\vert B_i\vert +\sum^\circ_{i\neq j}\vert B_j\vert) \leq l(\mathfrak{P})-2HA(\mathfrak{P})$$
or equivalently
$$2\overline{\beta}=2\sum^\circ_{i\neq j}\vert B_i\vert \leq l(\widetilde{\mathfrak{P}}))-2HA(\widetilde{\mathfrak{P}})+(2HA(L)-2\vert B_i\vert).$$
However since a solution (for example $v_n$) exits in $L$, we have by Lemma \ref{l2} $2HA(L)<2\vert B_i\vert$ so condition (\ref{e12}) is satisfied.

We now set
$$u^+(p)=\max_i\{u_i^+(p)\} \hspace{.1cm} \textnormal{in} \hspace{.1cm} \Omega^\ast \hspace{.1cm} \textnormal{and} \hspace{.1cm} u^-(p)=\min_i\{u^-_i(p)\} \hspace{.1cm} \textnormal{in} \hspace{.1cm} \Omega.$$

We note that if we compare each $u^+_i$ to a fixed bounded solution in $\Omega^\ast$ (which exists since $\Omega$ is admissible), then by the maximum principle there is a constant $N>0$ such that $u^+_i>-N$, $i=1,...,k$. Finally we set $u_n=v_n-\mu(n)$.

We now claim that
$$u_n\leq u^++M \hspace{.2cm} \textnormal{in} \hspace{.2cm} \Omega^\ast \hspace{.2cm} \textnormal{and} \hspace{.2cm} u_n\geq u^--M \hspace{.2cm} \textnormal{in} \hspace{.2cm} \Omega.$$
where $M=N+\sup_{\Omega^\ast}\vert v_0\vert$. Suppose $u_n>v_0$ at some point $p$. Then 
$$v_n-v_0>\mu(n)$$
at $p$, so that $p$ is in some $E^i_\mu$. Applying the maximum principle in the domain $E^i_\mu$, we obtain
$$u_n\leq u^+_i+N+\sup_{E^i_\mu}\vert v_o\vert\leq u^++M \hspace{.2cm} \textnormal{at} \hspace{.2cm} p.$$

On the other hand, suppose $u_n<v_0$ at some point $p\in\Omega$. Then $v_n-v_0>\mu(n)$ at $p$, so that $p$ is in some $F^i_\mu$. By what has been shown above, there is a corresponding $j=j(i)$ so that $F^i_\mu\cap F^j_\mu=\emptyset$, we obtain
$$u_n\geq u^-_j-\sup_{F^i_\mu}\vert v_o\vert\geq u^--M.$$
 
Therefore the claim is justified and the sequence $\{u_n\}$ is uniformly bounded on compact subsets of $\Omega$. By the compactness principle, a subsequence $\{u_n\}$ converges uniformly on compact subsets of $\Omega$ to a solution $u$. We still must show that $u$ takes on the required boundary values. We observe that a subsequence $\mu(n)$ diverges to $+\infty$ otherwise we can extract a subsequence converging to a finite limit $\mu_0$. Each $u_n$ would then be bounded below in $\Omega^\ast$ uniformly in $n$, and the boundary values of $u_n$ would tend uniformly on compact subsets of $\cup B^\ast_i$ to $-\mu_0$ and diverge uniformly to $+\infty$ on $\cup A_i$. Once again we could find a subsequence converging uniformly on compact subsets to a solution in $\Omega^\ast$. By Proposition \ref{T5} we would have
$$v=\left\{
  \begin{array}{ll}
    +\infty \hspace{1.6cm} \textnormal{on} \hspace{.3cm} \cup A_i & \hbox{} \\
    -\mu_0 \hspace{1.5cm} \textnormal{on} \hspace{.3cm} \cup B^\ast_i & \hbox{}
  \end{array}
\right.
$$
We can now obtain a contradiction to (\ref{e1}) by a flux argument. By Lemma \ref{a12}
\begin{equation}\label{ee14}
2HA(\Omega)=F_v(\Sigma A_i)+F_v(\Sigma B_i)
\end{equation}
while by Lemma \ref{l2} and Lemma \ref{a16}
\begin{equation}\label{ee15}
\vert F_v(\Sigma B_i)\vert<\beta \hspace{.2cm} \textnormal{and} \hspace{.2cm} F_v(\Sigma A_i)=\alpha
\end{equation}
Combining (\ref{ee14}) and (\ref{ee15}) gives $\alpha-\beta<2HA(\Omega)$, a contradiction. In the same way, we see $n-\mu(n)$ diverges to $+\infty$. Summing up we have shown that the boundary values of $u_n$, namely, $-\mu(n)$ on $\cup B_i^\ast$ and $n-\mu(n)$ on $\cup A_i$ diverges to $-\infty$ and $+\infty$ respectively. Therefore $u_n$ diverges to $-\infty$ on $\cup B_i$. Since the necessity conditions (\ref{e1}) and (\ref{e2}) is straightforward, we conclude the theorem.
\end{proof}

We ended this paper with the following maximum principle at the infinity, the prove of this theorem is similar to this one in \cite[Theorem 4.1]{Pinheiro2}  
\begin{theorem}(Maximum principle at infinity)
Let $\Omega\subset\R^2$ be an admissible domain, suppose that the family $\{C_k\}$ is nonempty, and let $u$, $v$ be solutions of the Dirichlet problem in $\Nil$, with the same continuous values on each arc $C_k\subset\partial\Omega$. Then $u=v$ on $\Omega$.
\end{theorem}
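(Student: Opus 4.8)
The plan is to run the classical flux/divergence argument, exploiting the fact that a nonempty family $\{C_k\}$ removes the one degree of freedom (an additive constant) that would otherwise obstruct uniqueness. Set $w=u-v$. Since $\Sigma_u$ and $\Sigma_v$ have the same constant mean curvature $H$, equation (\ref{ee6}) gives $\div_{\R^2}(X_u)=\div_{\R^2}(X_v)=2H$, hence $\div_{\R^2}(X_u-X_v)=0$. Therefore
$$\div_{\R^2}\big(w\,(X_u-X_v)\big)=\langle\nabla_0 w,X_u-X_v\rangle,$$
and by Lemma \ref{l1} the right-hand side is $\geq 0$, vanishing at a point exactly when $\nabla_0 u=\nabla_0 v$ there. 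The entire strategy reduces to showing that the integral of this nonnegative quantity over $\Omega$ is zero.

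To make the integration rigorous despite the infinite boundary data, I would exhaust $\Omega$ by smooth subdomains $\Omega_\epsilon\Subset\Omega$ obtained by deleting $\epsilon$-neighbourhoods of $\partial\Omega$ and of the finitely many vertices. On each $\Omega_\epsilon$ the divergence theorem yields
$$\int_{\Omega_\epsilon}\langle\nabla_0 w,X_u-X_v\rangle\,dA=\int_{\partial\Omega_\epsilon}w\,\langle X_u-X_v,\nu\rangle\,ds.$$
Because the left-hand integrand is pointwise nonnegative and $\Omega_\epsilon\uparrow\Omega$, monotone convergence guarantees $\int_{\Omega_\epsilon}(\cdot)\to\int_{\Omega}(\cdot)$; thus it suffices to prove the boundary integral on the right tends to $0$ as $\epsilon\to 0$. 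This sidesteps any a priori concern about improper integrability of $\nabla_0 w$ near the arcs where $w$ is unbounded.

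Next I would analyse the boundary pieces of $\partial\Omega_\epsilon$ separately. Along the part approaching an arc $C_k$, both $u$ and $v$ extend continuously with the \emph{same} data $f$, so $w\to 0$ uniformly there, while $\lvert\langle X_u-X_v,\nu\rangle\rvert\leq\lvert X_u\rvert+\lvert X_v\rvert<2$; hence this contribution vanishes in the limit. Along the parts approaching an $A_i$ (respectively $B_i$), Theorem \ref{S1} gives that both $\Sigma_u$ and $\Sigma_v$ converge in the $C^k$-topology to the \emph{same} vertical cylinder $\pi^{-1}(A_i)$ (respectively $\pi^{-1}(B_i)$); consequently $X_u$ and $X_v$ both converge to the outer conormal $\nu$, so $\langle X_u-X_v,\nu\rangle\to 0$ uniformly on compact subarcs, and the same asymptotics keep $w$ bounded there. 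Finally, the small circular arcs surrounding the finitely many vertices contribute a term of order $O(\epsilon)$, exactly as in the proof of Theorem \ref{MP}. Letting $\epsilon\to 0$ gives $\int_\Omega\langle\nabla_0 w,X_u-X_v\rangle\,dA=0$. Since the integrand is nonnegative, it vanishes identically, so by Lemma \ref{l1} we get $\nabla_0 u=\nabla_0 v$ on $\Omega$, i.e. $w$ is constant; as $\{C_k\}$ is nonempty and $w=0$ on each $C_k$, that constant is $0$, whence $u=v$.

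The hard part will be the boundary analysis along the $A_i$ and $B_i$: one must establish both that $w=u-v$ stays bounded (equivalently, that the two divergent graphs are asymptotic at finite vertical distance) and that $\langle X_u-X_v,\nu\rangle\to 0$, and then control the product $w\,\langle X_u-X_v,\nu\rangle$ so that its integral over the approximating curves genuinely vanishes as $\epsilon\to 0$. Both facts rest on the fine asymptotic description furnished by Theorem \ref{S1}, and extracting a rate strong enough to kill the boundary term is where the real work lies; the role of the hypothesis that $\{C_k\}\neq\varnothing$ is precisely to fix the free additive constant at the end.
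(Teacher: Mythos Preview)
The paper does not actually give a proof of this theorem; it simply remarks that the argument is analogous to \cite[Theorem~4.1]{Pinheiro2}. Your overall strategy---integrating the nonnegative quantity $\langle\nabla_0(u-v),X_u-X_v\rangle$ and showing the boundary flux vanishes---is exactly the classical one used there and in Jenkins--Serrin, Spruck, Collin--Rosenberg, etc., so you are on the right track.

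There is, however, a genuine gap in your boundary analysis along the arcs $A_i$ and $B_i$. You assert (and flag as ``the hard part'') that $w=u-v$ remains bounded as one approaches these arcs. Theorem~\ref{S1} tells you that each graph $\Sigma_u,\Sigma_v$ converges in $C^k$ to the \emph{same} vertical cylinder, and this does give $X_u-X_v\to 0$; but convergence of the surfaces to the cylinder is a statement about graphs over the tangent plane to the cylinder, not about the functions $u,v$ on the base, and it does \emph{not} yield a uniform bound on $u-v$. Without such a bound you cannot control the product $w\langle X_u-X_v,\nu\rangle$ on the approximating curves, and the argument stalls precisely where you say it does.

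The standard remedy---and this is what the references do---is to truncate. Fix $M>0$ and replace $w$ by $\varphi_M=\max\{-M,\min\{M,w\}\}$. Then $|\varphi_M|\le M$ everywhere, $\nabla_0\varphi_M=\nabla_0 w$ on $\{|w|<M\}$ and vanishes elsewhere, and
\[
\int_{\Omega_\epsilon}\langle\nabla_0\varphi_M,X_u-X_v\rangle\,dA
=\int_{\partial\Omega_\epsilon}\varphi_M\,\langle X_u-X_v,\nu\rangle\,ds.
\]
Now the boundary pieces are easy: near each $C_k$ one has $\varphi_M=w\to 0$ with $|\langle X_u-X_v,\nu\rangle|<2$; near each $A_i$ or $B_i$ one has $|\varphi_M|\le M$ while $\langle X_u-X_v,\nu\rangle\to 0$ uniformly (this is where Theorem~\ref{S1}, or equivalently the reasoning behind Lemma~\ref{a16}, is used); the vertex arcs contribute $O(\epsilon)$ as in Theorem~\ref{MP}. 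Hence the left side vanishes in the limit, so $\nabla_0 u=\nabla_0 v$ on the open set $\{|w|<M\}$. This set contains a neighbourhood of each $C_k$, where $w=0$; since $w$ is locally constant on $\{|w|<M\}$ and continuous on the connected domain $\Omega$, a short open--closed argument gives $\{|w|<M\}=\Omega$ and $w\equiv 0$. The truncation completely removes the need to prove $u-v$ bounded a priori.
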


\vspace{0.5cm}

\hspace{-0.6cm}Carlos Penafiel - penafiel@im.ufrj.br\\
Universidade Federal de Rio de Janeiro\\
Instituto de Matemática e Estatística\\
Av. Athos da Silveira Ramos 149,\\
Centro de Tecnologia, Bloco C \\
Cidade Universitária - Ilha do Fundão \\
CEP 21941-909\\
Rio de Janeiro, RJ - Brasil.


\begin{thebibliography}{1}
  \bibliographystyle{alpha}
\bibitem{Penafiel-Cui}
Cui, Q. and Penafiel, C.
\newblock The Jenkin-Serrin Problem for Constant Mean Curvature Graphs in Killing Submersion.
\newblock {Pre-print}.

\bibitem{Dajczer}
Dajczer, M. and Lira, J.
\newblock Killing graphs with prescribed mean curvature and
Riemannian submersions.
\newblock {\em Ann. Inst. H. Poincaré Anal. Non Linéaire}, \textbf{26}(3): 763-775, 2009.

\bibitem{Dajczer1}
Dajczer, M., Alias, L. J. and Rosenberg, H.
\newblock The Dirichlet problem for CMC surfaces
in Heisenberg space.
\newblock pre-print

\bibitem{D} Daniel, Benoit. Isometric Immersions into 3-dimensional Homogeneous Manifolds. Comment. Math. Helv., ISSN 0010-2571. 82-1, 87-131. 2007.

\bibitem{F} Figueroa, C.; Mercuri, F.; Pedrosa, R. Invariant surfaces of the Heisenberg group..\emph{ Ann. Mat. Pura Appl}, 177 (1999), 173--194.

\bibitem{Folha2}
Folha, A. and Melo, S.
\newblock The Dirichlet problem for constant mean curvature graphs in $\h\times\R$ over unbounded domains.
\newblock {\em Pac. J. of Math.}, \textbf{251}: 1, 2011.

\bibitem{Penafiel-Folha}
Folha, A. and Penafiel, C.
\newblock The Generalized Dirichlet Problem for Constant Mean Curvature Graphs Over Unbounded Domains in the Space $\widetilde{PSL}_2(\R,\tau)$ over unbounded domains.
\newblock {Pre-print}.

\bibitem{Folha1}
Folha, A. and Rosenberg, H.
\newblock The Dirichlet problem for constant mean curvature graphs in $M^2\times\R$.
\newblock {\em Geomtry and Topology}, \textbf{16}: 1171-1203, 2012.

\bibitem{Hauswirth}
Hauswirth, L. Rosenberg, H. and Spruck, J.
\newblock Infinite boundary value problems for constant mean curvature graphs in $\h\times\R$ and $\mathbb{S}^2\times\R$.
\newblock {\em Amer. J. Math.}, \textbf{131}(1): 195-226, 2009.

\bibitem{JS}
Jenkins, H. and Serrin, J.
\newblock Variational problems of minimal surface type
II. Boundary value problems for the minimal surface equation.
\newblock {\em Arch.
Rational Mech. Anal.}, \textbf{31}: 321-342, 1966.


\bibitem{Claudemir}
Leandro, C. and Rosenberg, H.
\newblock Removable singularities for sections of Riemannian submersions of prescribed mean curvature.
\newblock {\em Bull. Sci. Math.}, \textbf{132}(4): 445-452, ISSN 0007-4497, 2009.


\bibitem{Mazet}
Mazet, L. Rodríguez, M. and Rosenberg, H.
\newblock The Dirichlet problem for the minimal surface equation-with possible infinite boundary data- over domains in a Riemannian surface.
\newblock {\em Proc. London Math. Soc.}, \textbf{3}(102): 985-1023, 2011.



\bibitem{NelliR}
Nelli, B. and Rosenberg, H.
\newblock Minimal surfaces in $\h\times\R$.
\newblock {\em Bull. Braz. Math. Soc.}, \textbf{33}: 263-292, 2002.


\bibitem{Pinheiro2}
Pinheiro, A. L.
\newblock Minimal vertical graphs in Heisenberg space.
\newblock {\em Preprint.}, \textbf{0}(0): 0-0. 

\bibitem{Pinheiro1}
Pinheiro, A. L.
\newblock The theorem of Jenkins-Serrin in $\M^2\times\R$.
\newblock {\em Bull. Braz. Math. Soc.}, \textbf{40}(1): 117-148, 2009. 


\bibitem{Rado}
Radö, T.
\newblock The problem of least area and the problem of Plateau.
\newblock {\em Math. Z.}, \textbf{32}: 763-796, 1930.

\bibitem{Rosenberg1}
Rosenberg H., Souam R. and Toubiana E.
\newblock General Curvature Estimates for Stable H-Surfaces
in 3-Manifolds and Applications
\newblock {\em J. Differential Geom.}, \textbf{84}(3): 623-648, MR2669367, 2010.

\bibitem{Spruck}
Spruck J.
\newblock Infinite boundary value problem for surfaces of constant mean curvature.
\newblock {\em Arch. Rationa Mech. Anal.}, \textbf{49}: 1-31, 1972.

\bibitem{T} Thurston, William. Three-Dimensional Geometry and Topology. Princeton, 1997.

\bibitem{Younes}
Younes R.
\newblock Minimal surfaces in $\widetilde{PSL}_2(\R)$
\newblock {\em Illinois J. Math.}, \textbf{54}(2): 671-712, 2010.

\end{thebibliography}
\end{document}